\DeclareMathAlphabet\mathscr{U}{eus}{m}{n}
\SetMathAlphabet\mathscr{bold}{U}{eus}{b}{n}
\DeclareMathAlphabet\matheur{U}{eur}{m}{n}
\SetMathAlphabet\matheur{bold}{U}{eur}{b}{n}
\numberwithin{equation}{section}
\newtheorem{theo}{Theorem}[section]
\newtheorem{prop}[theo]{Proposition}
\newtheorem{lemm}[theo]{Lemma}
\newtheorem{coro}[theo]{Corollary}
\theoremstyle{definition}
\newtheorem{defi}[theo]{Definition}
\newtheorem{exam}[theo]{Example}
\newtheorem{exas}[theo]{Examples}
\theoremstyle{remark}
\newtheorem{rema}[theo]{Remark}
\begin{document}\allowdisplaybreaks\frenchspacing %\allowdisplaybreaks: kann unnumerierte (!) Auflistungen umbrechen

\setlength{\baselineskip}{1.1\baselineskip}

\title{Abelian Sandpiles and Algebraic Models}

\author{Gabriel Strasser}

\address{Gabriel Strasser: Mathematics Institute, University of Vienna, Oskar-Morgenstern-Platz 1, A-1090 Vienna, Austria} \email{gabriel.strasser@univie.ac.at}
%\begin{document}
\begin{abstract} 
Motivated by the coincidence of topological entropies the connection between abelian sandpiles and harmonic models was established by K. Schmidt and E. Verbitskiy (2009). The dissipative sandpile models were shown to be symbolic representations of algebraic $\mathbb{Z}^d$-actions of the harmonic models. Both models are determined by so-called  simple sandpile polynomials. We extend this result to arbitrary sandpile polynomials. Moreover, we show that any sandpile model determined by a factor of a sandpile polynomial acts as an equal entropy cover of the corresponding algebraic model. For a special class of factors these covers are shown to be symbolic representations. 
\end{abstract}

\maketitle

\section{Introduction}\label{s:intro}
The abelian sandpile model (ASM) is a lattice model introduced by Dhar in \cite{4Dhar1}. It is inspired by an automaton model of self-organised criticality presented by Bak, Tang and Wiesenfeld (BTW) in \cite{BTW1,BTW2}. In both models grains of sand are dropped at sites in a finite set of the lattice thereby forming piles of sand. Eventually these piles get ``unstable''  and ``topple''. In the ASM the toppling conditions depend on the local heights of the piles rather than their gradients as in the BTW model: if a sandpile reaches or exceeds a critical height it gets unstable and topples according to a given rule. The toppling is formalised by applying so-called toppling operators. In contrast to the BTW model these operators commute in the ASM, motivating the name of the model. The abelian property led to extensive research on generalisations of Dhar's model, also named ASMs. For example, the model was studied on different lattices (\cite{Dhar3, Redig3}), the toppling rules were varied (\cite{Speer}), and extended to infinite volume (\cite{Jarai,4Redig2,Redig5}). Dhar also showed in \cite{Dhar2} that the topological entropy $\textup{h}(\sigma)$ of the shift $\sigma$ on the set $\mathcal{R}$ of infinite recurrent configurations of the ASM in $\mathbb{Z}^2$ (cf. \eqref{eq:infiniteba}) is given by
\[
\textup{h}(\sigma)=\int_0^1 \int_0^1 \log \left(4-2 \cos(2\pi x_1) -2 \cos(2 \pi x_2)\right) dx_1 dx_2.
\]
Motivated by the coincidence of topological entropies the connection between abelian sandpiles and algebraic models (cf. Section \ref{s:algebraicmodel}) was established in \cite{4Schmidt1}. Among others the question of uniqueness of the measure of maximal entropy is discussed. Although this uniquness conjecture remains unresolved the authors show in \cite{4Schmidt1} that the dissipative ASMs (cf. Definition \ref{d:dissipative}) in infinite volume, studied in \cite{4Redig2}, have a unique measure of maximal entropy. In fact they are symbolic representations of their corresponding algebraic models. 

In this paper we investigate the question whether there are other abelian sandpile models which act as symbolic representations of algebraic models. Taking into account the difficulties encountered in \cite{4Schmidt1} and described above we restrict ourselves to the case of dissipative ASMs. 
In Section \ref{s:ASM} we give a short description of a general dissipative ASM where the toppling condition is given as in Dhar's model and the toppling rule is determined by a wider class of Laurent polynomials, which we call sandpile polynomials (cf. Definition \ref{d:sandpilepol}). 
In the next section we introduce a class of algebraic models each of which is determined by an expansive Laurent polynomial (cf. Definition \ref{d:expansiveh}). In Section \ref{s:symbolicrepresentation} we take advantage of the fact that the methods used in \cite{4Redig2} and \cite{4Schmidt1} for dissipative ASMs are equally applicable to the more general versions. This means that the ASMs introduced in Section \ref{s:ASM} are symbolic representations of their algebraic counterparts defined in Section \ref{s:algebraicmodel}. In the last section we modify the toppling condition in the spirit of the original BTW model, where the toppling rule is defined similarly to Dhar's model but the toppling condition differs: A sandpile gets unstable and topples if the difference between heights of two neighbouring sites exceeds some critical threshold. In the modified version this linear dependence is described by an expansive Laurent polynomial $g$ and we want to find a suitable toppling rule, determined by an expansive Laurent polynomial $f$, such that the resulting model 
\begin{enumerate}
\item represents an abelian group (in finite volume) and
\item is a symbolic representation of the algebraic model associated to $f$ (in infinite volume),
\end{enumerate}
simulating the properties of Dhar's ASMs in finite and infinite volume. More precisely, we first show in Theorem \ref{t:equalentropy} that if $h=fg$ is a sandpile polynomial, where $f$ and $g$ determine the toppling rule and condition, respectively, then the modified BTW model is an equal entropy cover of the algebraic model associated to $f$. In the remainder of the last section we find a class of Laurent polynomials which satisfies the properties (1) and (2) from above, strengthening thereby the statement of Theorem \ref{t:equalentropy}. This result is summarised in Theorem \ref{t:symbolicrepresentationgeneral}. At the end we express our interest in the question whether there exist other classes of Laurent polynomials which fulfil these properties. 

In general, the concrete physical picture of sandpiles getting unstable and toppling might no longer be fitting in a BTW-like model. Although we show that these models can be realised, under specific conditions, as abelian subgroups of 'classical' ASMs described in Section \ref{s:ASM} they may also serve as toy models with implicit abelian structure for other physical phenomena. 

\section{The abelian sandpile model}\label{s:ASM}
Before describing the abelian sandpile model we need some preparation. 
\begin{defi}\label{d:dompol}
Let $d \ge 1$. We call a Laurent polynomial $h=\sum_{\mathbf{j} \in \mathbb{Z}^d} h_\mathbf{j} u^{\mathbf{j}} \in \mathbf{R}_d$ {\it lopsided} if there exists an $\mathbf{m} \in \mathbb{Z}^d$ such that 
\begin{equation*}%\label{eq:lopsided}
2h_\mathbf{m} > \sum_{\mathbf{j} \in \mathbb{Z}^d} | h_\mathbf{j}|>0. 
\end{equation*}
We denote the {\it dominant coefficient} $h_\mathbf{m}$ of $h$ by $\gamma_h$.
\end{defi}
\begin{defi}\label{d:sandpilepol}
 Let $d\ge1$. A lopsided Laurent polynomial $h \in \mathbf{R}_d$ with $\gamma_h=h_\mathbf{0}$ is called a {\it sandpile polynomial} if $h_\mathbf{j}\le 0$ for all $\mathbf{j} \in \mathbb{Z}^d \setminus \{\mathbf{0}\}$. We denote the collection of sandpile polynomials by $\mathbf{P}_d$. Moreover, $h \in \mathbf{P}_d$ is called {\it simple} if $h_{\mathbf{i}}=h_{-\mathbf{i}} \in \{-1,0\}$ for every $\mathbf{i}\in \mathbb{Z}^d\setminus \{\mathbf{0}\}$ and
\begin{equation*}\label{eq:hsimple}
h_{\mathbf{i}}\cdot h_{-\mathbf{i}}=
\begin{cases}
\gamma_h^2 & \textup{if} \enspace  \mathbf{i}=\mathbf{0}
\\
1& \textup{if} \enspace  \mathbf{i}\ \textup{or}\ \mathbf{-i}\ \textup{is a unit vector in}\ \mathbb{Z}^d
\\
0& \textup{otherwise.} 
\\
\end{cases}
\end{equation*} 
 \end{defi}
 \begin{rema}
 The position of the dominant coefficient $\gamma_h=h_{\mathbf{0}}$ in the definition of a sandpile polynomial serves only notational benefits. This condition will have no effect on the theory presented in this paper.
 \end{rema}
For $h \in \mathbf{P}_d$ and any finite subset $F \subseteq \mathbb{Z}^d$, in symbols $F \Subset \mathbb{Z}^d$, we define the so-called {\it toppling matrix} $\Delta=\Delta^{h,F}$ associated to $h$ by setting 
\begin{equation}\label{eq:topplingmatrix}
\Delta_{\mathbf{i}\mathbf{j}}={h}_{\mathbf{i}-\mathbf{j}}
\end{equation} 
for every $\mathbf{i},\mathbf{j} \in F$. Obviously, the matrix $\Delta$ has dimension $|F| \times |F|$ and it inherits for every $\mathbf{i},\mathbf{j} \in F$ the following properties from $h$:

\begin{equation*}
\begin{aligned} 
  \textup{(P1)}\ \ & \Delta_{\mathbf{i}\mathbf{i}}=\gamma_h > 0,\  \Delta_{\mathbf{i}\mathbf{j}} \le 0\ \textup{for}\ \mathbf{i}\neq \mathbf{j},
  \\
  \textup{(P2)}\ \ & \sum_{\mathbf{k} \in F} \Delta_{\mathbf{i}\mathbf{k}} >0.
  \end{aligned}
\end{equation*}

We give a short introduction to the ASM. The content of this subsection is taken mainly from \cite{Redig1}, \cite{4Redig2}, and \cite{Speer}, where the theory is treated in much greater detail. 

 Let $d \ge 1$, $h \in \mathbf{P}_d$, and denote by $\mathbb{Z}_+$ the set of nonnegative integers. Assume that $F \Subset \mathbb{Z}^d$ and write $\Delta=\Delta^{h,F}$ for the toppling matrix associated to $h$. A {\it configuration} is an element of $\mathbb{Z}_+^F$. We call a configuration $v \in \mathbb{Z}_+^F$ {\it stable} if $v_\mathbf{i} < \gamma_h$ for all $\mathbf{i} \in F$ and {\it unstable} otherwise. If $v$ is unstable at site $\mathbf{i}$, say, it {\it topples} at this site and results in another configuration $T_\mathbf{i}(v)\in \mathbb{Z}_+^F$ defined by 
\begin{equation}
\label{eq:topplingrule}
\smash[t] T_\mathbf{i}(v)_\mathbf{j}=(v-\Delta_{\cdot \mathbf{i}})_\mathbf{j}=
\begin{cases}
v_\mathbf{i}-\gamma_h & \textup{if} \enspace  \mathbf{j}=\mathbf{i}
\\
v_\mathbf{j} + |{h}_\mathbf{j-i}| & \textup{if} \enspace \mathbf{j}\neq \mathbf{i}\enspace \textup{and} \enspace \mathbf{j-i} \in \textup{supp}(h)
\\
v_\mathbf{j} & \textup{otherwise},
\end{cases}
\end{equation}
where $\textup{supp}(h)=\{\mathbf{j}\in \mathbb{Z}^d: h_\mathbf{j} \neq 0\}$ denotes the {\it support} of $h$.

The terminology is based on the physical picture of having grains of sand on a finite set $F$ of the lattice $\mathbb{Z}^d$. At each site of $F$ the grains can pile up to a given threshold $\gamma=\gamma_h$. The pile gets unstable when the threshold is reached or exceeded and topples according to the rule given by toppling matrix $\Delta$ associated to $h$. 

By the finiteness of $F$ and from the fact that at each toppling at least one grain is lost (cf. Definition \ref{d:sandpilepol} and \eqref{eq:topplingrule}) each unstable configuration $v$ has to stabilise after finitely many topplings. The stabilisation of $v$ in $F$ is denoted by $S(v)=S_F^{(h)}(v)$. Furthermore, the toppling operators $T_\mathbf{i}$ commute, i.e., $T_{\mathbf{i}}(T_\mathbf{j}(v))=T_{\mathbf{j}}(T_\mathbf{i}(v))$ for every configuration $v$ unstable at $\mathbf{i}, \mathbf{j} \in F$. Hence $S(v)$ is independent of the order of the toppling and therefore well-defined. We denote the space of all stable configurations by $\mathcal{S}_F=\mathcal{S}^{(h)}_F$ and note that $\mathcal{S}_F=\{0,\dots,\gamma_h-1\}^F$. 
 
The set $\mathcal{S}_{F}$ of stable configurations on $F$ forms a semigroup under the following addition. If $u,v \in \mathcal{S}_F$ then
 \begin{equation*}%\label{eq:finiteaddition}
 u \oplus v = S(u+v) \in \mathcal{S}_F,
 \end{equation*}
 where the configuration $w=u+v$ is defined by the usual coordinate-wise addition. The unique maximal subgroup $\mathcal{R}_F$ of $\mathcal{S}_F$ is called the set of {\it recurrent configurations} on $F$. The finite group $\mathcal{R}_F$ is abelian and therefore carries the unique addition-invariant Haar probability measure $\mu_F$. Set
\begin{equation*}
N_F(\mathbf{i})= -\sum_{\mathbf{j} \neq \mathbf{i}}\Delta^{h,F}_\mathbf{ji}
\end{equation*}
and observe that $N_F(\mathbf{i})< \Delta_{\mathbf{ii}}=\gamma_h$. For simple $h$ the number $N_F(\mathbf{i})$ describes the number of neighbours of the site $\mathbf{i}$ in $F$. As presented in the following the set $\mathcal{R}_F$ of recurrent configurations on $F$ are characterised by the so-called {\it burning algorithm}, which is due to Dhar and presented in \cite{Redig1} for simple $h$ and in \cite{Speer} for general $h\in \mathbf{P}_d$.

Put
\begin{equation*}%\label{eq:allowed}
\mathcal{P}_F=\{v\in \mathcal{S}_F: v_\mathbf{i}\ge N_F(\mathbf{i})\ \textup{for at least one}\ \mathbf{i}\in F \},
\end{equation*}
then the set $\mathcal{R}_F$ of recurrent configurations is given by 
\begin{equation}\label{eq:finiteba}
\mathcal{R}_F= \bigcap_{\emptyset\neq E \subset F} \mathcal{P}_E. 
\end{equation}
Equation \eqref{eq:finiteba} extends immediately to infinite volume by defining

\begin{equation} \label{eq:infiniteba} 
\mathcal{R}=\mathcal{R}_{\mathbb{Z}^d}= \bigcap_{\emptyset\neq E \Subset \mathbb{Z}^d} \mathcal{P}_E.
\end{equation}
\begin{rema}
For a finite subset $E\subset F$ with not necessarily finite $F \subset \mathbb{Z}^d$ Equation \eqref{eq:finiteba} and \eqref{eq:infiniteba} implicitly uses the obvious embedding of $\mathcal{P}_E$ into $\mathcal{S}_F$ as collection of cylinder sets in $\mathcal{S}_F$. It follows that every $v \in \mathcal{R}$ can be approximated arbitrarily well by elements in $\mathcal{R}_F$ for suitable $F \subset \mathbb{Z}^d$ and it is easily seen that $\mathcal{R}$ is a perfect set. In particular, $\mathcal{R}$ is closed and therefore compact.
\end{rema} 
The compact set $\mathcal{R}\subset \{0,1,\dots, \gamma_h-1\}^{\mathbb{Z}^d}$ is called the {\it abelian sandpile model (ASM)}  in infinite volume. Obviously, the set $\mathcal{R}$ together with the $\mathbb{Z}^d$-shift action $\sigma:\mathcal{R}\to \mathcal{R}$, defined by
\begin{equation*}%\label{eq:shiftOnR}
\sigma^\mathbf{m}(v)_\mathbf{n}=v_{\mathbf{n+m}}
\end{equation*}
for every $\mathbf{m,n}\in \mathbb{Z}^d$, determine a $d$-dimensional shift space $(\mathcal{R},\sigma)$. We will not distinguish nominally between the ASM and its associated shift space.
\begin{rema}
Note that the abelian sandpile model $\mathcal{R}$ is determined by the sandpile polynomial $h \in \mathbf{P}_d$. To emphasise this dependence we denote the associated shift space by $(\mathcal{R}^{(h)},\sigma_h)$.
\end{rema}

\section{The algebraic model}\label{s:algebraicmodel}
We begin this section with some basic definitions. Let $d \ge 1$ and $X$ a compact metrisable space. A {\it continuous $\mathbb{Z}^d$-action} is a $\mathbb{Z}^d$-action $T: \mathbf{n}\to T^\mathbf{n}$ by homeomorphisms of $X$. 
If $S$ is a second $\mathbb{Z}^d$-action by homeomorphisms of a compact metrisable space $Y$, say, then $(Y,S)$, or simply $Y$, is a {\it factor} of $(X,T)$ if there exists a continuous surjective map $\phi:X \to Y$ such that 
\begin{equation}\label{eq:factor}
\phi \circ T^\mathbf{n} = S^\mathbf{n} \circ \phi
\end{equation}
for every $\mathbf{n} \in \mathbb{Z}^d$. If the {\it factor map} $\phi$ in \eqref{eq:factor} is a homeomorphism then it is called a {\it conjugacy} and $S$ and $T$ are {\it topologically conjugate}.
%\medskip

Let $\ell^1(\mathbb{Z}^d,\mathbb{Z})$ and $\ell^\infty(\mathbb{Z}^d,\mathbb{Z})$ be the subgroups of integer-valued functions of the Banach spaces $\ell^1(\mathbb{Z}^d,\mathbb{R})$ and $\ell^\infty(\mathbb{Z}^d,\mathbb{R})$ with norms $\|.\|_1$ and $\|.\|_\infty$, respectively. We write the elements $v \in \ell^\infty(\mathbb{Z}^d,\mathbb{R})$ as $v=(v_\mathbf{n})$ and define the shift-action $\bar{\sigma}$ of $\mathbb{Z}^d$ on $\ell^\infty(\mathbb{Z}^d,\mathbb{R})$ by
\begin{equation}\label{eq:4shift}
(\bar{\sigma}^\mathbf{m} v)_\mathbf{n}=v_\mathbf{m+n}
\end{equation}
for every $\mathbf{m,n}\in \mathbb{Z}^d$. The restriction of $\bar{\sigma}$ to a subshift $Y\subset \ell^\infty(\mathbb{Z}^d,\mathbb{Z})$ will be denoted by $\sigma_Y$ or simply $\sigma$ if the subshift is clear from the context. A subshift $Y$ is called {\it symbolic cover} of $(X,T)$ with {\it covering map} $\phi$, say, if $(X,T)$ is a factor of $(Y,\sigma)$. A symbolic cover $Y$ is an {\it equal entropy cover} if in addition the topological entropies $\textup{h}(\sigma)$ and $\textup{h}(T)$ coincide. For a symbolic cover $Y$ being a {\it symbolic representation} of $(X,T)$ we need to find a shift-invariant Borel set $Y'\subset Y$ with $\nu(Y')=1$ for every shift-invariant probability measure $\nu$ on $Y$ of maximal entropy such that the restriction of $\phi$ to $Y'$ is injective.

If $X$ is a compact abelian metrisable group with identity element $e_X$ and normalised Haar measure $\lambda_{X}$ then any $\mathbb{Z}^d$-action $\alpha:\mathbf{n} \to \alpha^\mathbf{n}$ by continuous automorphisms of $X$ is called an {\it algebraic $\mathbb{Z}^d$-action}. We say an algebraic $\mathbf{Z}^d$-action $\alpha$ on $X$ is {\it expansive} if there exists an open neighbourhood $\mathcal{O}$ of the identity $e_X$ of $X$ with $\bigcap_{\mathbf{n}\in \mathbb{Z}^d} \alpha^{\mathbf{n}}(\mathcal{O})=\{e_X\}$. A point $x \in X$ is called {\it homoclinic} under $\alpha$ if $\lim_{|\mathbf{n}|\to \infty} \alpha^{\mathbf{n}} x=0$.

Let $\mathbf{R}_d=\mathbb{Z}[u_1^{\pm 1},\dots,u_d^{\pm 1}]$ be the ring of Laurent polynomials with integer coefficients in $d$ variables. Put $u^\mathbf{m}=u_1^{m_1}\cdots u_d^{m_d}$ for every $\mathbf{m}=(m_1,\dots,m_d) \in \mathbb{Z}^d$ and write each $h \in \mathbf{R}_d$ as
\begin{equation}\label{eq:h} 
h = \sum_{n\in \mathbb{Z}^d} h_\mathbf{n} u^{\mathbf{n}}, 
\end{equation}
where $h_{\mathbf{n}} \in \mathbb{Z}$ and $h_\mathbf{n}= 0$ for all but finitely many $\mathbf{n} \in \mathbb{Z}^d$. Obviously we can identify each $h \in \mathbf{R}_d$ of the form \eqref{eq:h} with $(h_\mathbf{n})\in \ell^1(\mathbb{Z}^d,\mathbb{Z})$, thereby obtaining an isomorphism $\mathbf{R}_d \cong \ell^1(\mathbb{Z}^d,\mathbb{Z})$.  
%\medskip

If $\alpha$ denotes the shift-action of $\mathbb{Z}^d$ on $\mathbb{T}^{\mathbb{Z}^d}$, defined similarly to \eqref{eq:4shift}, then every nonzero $h=\sum_{\mathbf{m}\in \mathbb{Z}^d} h_\mathbf{n} u^\mathbf{n} \in \mathbf{R}_d$ defines a continuous surjective group homomorphism
\begin{equation*}%\label{eq:halpha}
h(\alpha)=\sum_{\mathbf{n}\in \mathbb{Z}^d} h_\mathbf{n}\alpha^\mathbf{n}: \mathbb{T}^{\mathbb{Z}^d} \to\mathbb{T}^{\mathbb{Z}^d}.
\end{equation*}
Consider the corresponding compact abelian group
\begin{equation}\label{eq:Xh}
X_h=\Big\{ (x_\mathbf{n}) \in \mathbb{T}^{\mathbb{Z}^d}: \sum_{\mathbf{n}\in \mathbb{Z}^d} h_\mathbf{n} x_\mathbf{n+m}=0\ \textup{for all}\ \mathbf{m}\in \mathbb{Z}^d \Big\}=\ker h(\alpha)
\end{equation}
and denote by $\alpha_h$ the restriction of $\alpha$ to $X_h$. By the term {\it algebraic model} determined by $h$ we refer to the dynamical system $(X_h,\alpha_h)$.
\begin{defi}\label{d:expansiveh}
We say a Laurent polynomial $h \in\mathbf{R}_d$ is {\it expansive} if it has no zeros in $\mathbb{S}^d$, where $\mathbb{S}=\{ s \in \mathbb{C}: |s|=1 \}$. 
\end{defi}
By \cite[Thm. 6.5]{4Schmidt2} the shift-action $\alpha_h$ is expansive if and only if $h$ is expansive.
\begin{rema}\label{r:dompolexpansive}
Note that any lopsided polynomial is expansive. In particular every sandpile polynomial is expansive (cf. Definition \ref{d:dompol} and \ref{d:sandpilepol}). 
\end{rema} 
The entropy $\textup{h}(\alpha_h)$ of $\alpha_h$ is given by the logarithmic Mahler measure (cf. \cite{4LindSchmidtWard})
\[
\textup{h}(\alpha_h)= \int_0^1 \dots \int_0^1 \log|h(e^{2\pi it_1},\dots,e^{2\pi it_d})|\ dt_1\dots dt_d
\]
and coincides with the metric entropy $\textup{h}_{\lambda_{X_h}}(\alpha_h)$ of $\alpha_h$ with respect to the normalised Haar measure $\lambda_{X_h}$ on $X_h$. The measure $\lambda_{X_h}$ is the unique measure of maximal entropy if $h$ is expansive. 

Define the surjective map $\rho: \ell^\infty(\mathbb{Z}^d,\mathbb{R})\to \mathbb{T}^{\mathbb{Z}^d}$ by
\begin{equation}\label{eq:rho}
\rho(v)_\mathbf{n}=v_\mathbf{n} \pmod 1
\end{equation}
for every $v=(v_\mathbf{n}) \in \ell^\infty(\mathbb{Z}^d,\mathbb{R})$ and $\mathbf{n} \in \mathbb{Z}^d$.
If $h=\sum_{\mathbf{m}\in \mathbb{Z}^d} h_\mathbf{n} u^\mathbf{n} \in \mathbf{R}_d$ we put
\begin{equation*}%\label{eq:tildeh}
\tilde{h}=\sum_{\mathbf{m}\in \mathbb{Z}^d} h_\mathbf{n} u^{-\mathbf{n}} 
\end{equation*}
and note that 
\begin{equation}\label{eq:hproperties}
\rho \circ h(\bar{\sigma})= h(\alpha) \circ \rho \quad \textup{and}\quad h(\bar{\sigma})(v) =\tilde{h} \cdot v,
\end{equation}
for every $v \in \ell^1(\mathbb{Z}^d,\mathbb{Z})\cong\mathbf{R}_d$, where  $(\tilde{h}\cdot v)_{\mathbf{n}}=\sum_{\mathbf{j}}\tilde{h}_jv_{\mathbf{n-j}}$ for every $\mathbf{n}\in \mathbb{Z}^d$. Finally we remark that the product $u\cdot w$ and the corresponding identity on the right-hand side of \eqref{eq:hproperties} are well-defined whenever $u \in \ell^1(\mathbb{Z}^d,\mathbb{R})$ and $w \in \ell^\infty(\mathbb{Z}^d,\mathbb{R})$. 

\section{The ASM $\mathcal{R}^{(h)}$ as symbolic representation of $X_{\tilde{h}}$}\label{s:symbolicrepresentation}
Fix $d\ge 1$ and a sandpile polynomial $h \in \mathbf{P}_d$. Recall that $h$ is expansive (cf. Remark \ref{r:dompolexpansive}). According to the proof of Lemma 4.5 in \cite{4LindSchmidt} the equation 
\begin{equation}\label{eq:homoclinicequation2} 
h\cdot w=\tilde{h}(\bar{\sigma})(w) =\delta_\mathbf{0},
\end{equation}
where $\delta_{\mathbf{0}}$ is the indicator function of $\{\mathbf{0}\}$, has a unique solution $w=w^h \in \ell^1(\mathbb{Z}^d,\mathbb{R})$ if $h$ is expansive. The $\ell^1$-summability of the homoclinic point $w^h$ allows us to define group homomorphisms $\bar{\xi}_h: \ell^{\infty}(\mathbb{Z}^d,\mathbb{Z})\to \ell^{\infty}(\mathbb{Z}^d,\mathbb{R})$ and $\xi_h: \ell^{\infty}(\mathbb{Z}^d,\mathbb{Z})\to \mathbb{T}^{\mathbb{Z}^d}$, given by
\begin{equation}\label{eq:xih}
 \bar{\xi}_h(v)= w^h \cdot v  \qquad \textup{and}\qquad \xi_h(v)= (\rho \circ \bar{\xi})(v).
\end{equation}
The maps $\bar{\xi}$ and $\xi$ satisfy the following properties (cf. \cite[Prop. 2.3]{Einsiedler}).
\begin{prop} Let $h\in \mathbf{R}_d$ be expansive. Then we have for any $v \in \ell^\infty(\mathbb{Z}^d,\mathbb{Z})$ that
\begin{equation}\label{eq:hinverse}
\tilde{h}(\bar{\sigma})(\bar{\xi}_h(v))= \bar{\xi}_h(\tilde{h}({\sigma})(v))=v.
\end{equation}
Furthermore, $\xi_h$ satisfies 
\begin{equation}\label{eq:kernel}
\begin{split}
\xi_h \circ \sigma^\mathbf{n} &= \alpha^\mathbf{n} \circ \xi_h\quad \textup{for every}\ \mathbf{n}\in \mathbb{Z}^d, \\
\ker \xi_h &= \tilde{h}(\sigma)(\ell^\infty(\mathbb{Z}^d,\mathbb{Z})), \\
\ker \xi_h \cap \ell^1(\mathbb{Z}^d,\mathbb{Z})&=\tilde{h}(\sigma)(\ell^1(\mathbb{Z}^d,\mathbb{Z}))=h\cdot \mathbf{R}_d.
\end{split}
\end{equation}
\end{prop}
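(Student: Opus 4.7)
The strategy is to reduce every part of the proposition to convolution identities on $\mathbb{Z}^d$, using the formula $h(\bar{\sigma})(v)=\tilde{h}\cdot v$ from \eqref{eq:hproperties} and the defining relation $h\cdot w^h=\delta_{\mathbf{0}}$ for the homoclinic point. Before touching the four assertions I would record, as a quick Fubini exercise, that triple convolutions of the form $f\cdot g\cdot v$ with $f$ finitely supported, $g\in\ell^1(\mathbb{Z}^d,\mathbb{R})$, and $v\in\ell^\infty(\mathbb{Z}^d,\mathbb{R})$ converge absolutely at every lattice site and may be associated in any order; this is the only analytic ingredient.

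For the inversion identity \eqref{eq:hinverse} I would use $\tilde{\tilde{h}}=h$ to rewrite $\tilde{h}(\bar{\sigma})$ as convolution with $h$ and simply compute
\[
\tilde{h}(\bar{\sigma})(\bar{\xi}_h(v))=h\cdot(w^h\cdot v)=(h\cdot w^h)\cdot v=\delta_{\mathbf{0}}\cdot v=v,
\]
and symmetrically $\bar{\xi}_h(\tilde{h}(\sigma)(v))=w^h\cdot(h\cdot v)=(w^h\cdot h)\cdot v=v$. The shift-equivariance in \eqref{eq:kernel} is then immediate, since convolution with the fixed element $w^h$ commutes with $\bar{\sigma}^{\mathbf{n}}$, giving $\bar{\xi}_h\circ\sigma^{\mathbf{n}}=\bar{\sigma}^{\mathbf{n}}\circ\bar{\xi}_h$; composing with $\rho$ and applying the obvious intertwining $\rho\circ\bar{\sigma}^{\mathbf{n}}=\alpha^{\mathbf{n}}\circ\rho$ yields $\xi_h\circ\sigma^{\mathbf{n}}=\alpha^{\mathbf{n}}\circ\xi_h$.

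The two kernel statements are then bookkeeping around the inversion identity, using the trivial fact that $\rho(u)=0$ iff $u\in\ell^\infty(\mathbb{Z}^d,\mathbb{Z})$. Forward inclusion: if $v=\tilde{h}(\sigma)(u)$ with $u\in\ell^\infty(\mathbb{Z}^d,\mathbb{Z})$, then by the first assertion $\bar{\xi}_h(v)=u$, so $\xi_h(v)=\rho(u)=0$. Reverse inclusion: if $\xi_h(v)=0$, set $u=\bar{\xi}_h(v)$; the hypothesis forces $\rho(u)=0$, hence $u\in\ell^\infty(\mathbb{Z}^d,\mathbb{Z})$, and \eqref{eq:hinverse} again gives $v=\tilde{h}(\sigma)(u)$. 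Restricting to $\ell^1$ inputs, the preimage $u=w^h\cdot v$ lies in $\ell^1(\mathbb{Z}^d,\mathbb{R})\cap\ell^\infty(\mathbb{Z}^d,\mathbb{Z})=\ell^1(\mathbb{Z}^d,\mathbb{Z})$, which yields the second kernel equality; the final identity $\tilde{h}(\sigma)(\ell^1(\mathbb{Z}^d,\mathbb{Z}))=h\cdot\mathbf{R}_d$ is just the translation of $\tilde{h}(\sigma)(u)=h\cdot u$ through the identification $\mathbf{R}_d\cong\ell^1(\mathbb{Z}^d,\mathbb{Z})$.

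I do not foresee any serious obstacle. The one mildly delicate point is the associativity of those triple convolutions, which one must check because $v\in\ell^\infty$ is not summable; but the finite support of $h$ (respectively of $\delta_{\mathbf{0}}$) makes the double sums collapse so that Fubini applies without issue, and the pivotal observation is that $\bar{\xi}_h(v)=w^h\cdot v$ is a priori only $\mathbb{R}$-valued yet becomes integer-valued precisely on $\ker\xi_h$, which is exactly the bridge required for the characterisation of $\ker\xi_h$.
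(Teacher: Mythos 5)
Your proof is correct; the paper gives no proof of this proposition, only the citation to Einsiedler--Schmidt [Prop.~2.3], and your convolution argument --- associativity of $h\cdot(w^h\cdot v)$ justified by Fubini (finite support of $h$, $w^h\in\ell^1$, $v\in\ell^\infty$), the identity $h\cdot w^h=\delta_{\mathbf{0}}$, and the observation that $\rho(u)=0$ iff $u$ is integer-valued --- is exactly the standard argument behind that reference. The one step worth spelling out, which you do correctly, is that $\ell^1(\mathbb{Z}^d,\mathbb{R})\cap\ell^\infty(\mathbb{Z}^d,\mathbb{Z})=\ell^1(\mathbb{Z}^d,\mathbb{Z})\cong\mathbf{R}_d$ because a summable integer-valued sequence must be finitely supported.
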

Following the penultimate section in \cite{4Schmidt1} one can apply the same methods used therein to get analogous results for general $h\in \mathbf{P}_d$, as stated below. 

\begin{lemm}\label{l:propxi}
 Let $h \in \mathbf{P}_d$. Then 
\begin{enumerate}
  \item $\xi_h(\mathcal{R}^{(h)})=X_{\tilde{h}}$.
  \item $\xi_h(v) \neq \xi_h(w)$ for all $v,w \in \mathcal{R}^{(h)}$ with $v-w \in \mathbf{R}_d$.
  \item If $v \in \ell^\infty(\mathbb{Z}^d,\mathbb{Z})$ then there exists an $m \in \ell^\infty(\mathbb{Z}^d,\mathbb{Z})$ such that $w=v+h\cdot m \in \mathcal{R}^{(h)}$.
 \end{enumerate}
 Furthermore, the topological entropies of $\alpha_{\tilde{h}}$ on $X_{\tilde{h}}$ and $\sigma_{\mathcal{R}^{(h)}}$ on $\mathcal{R}^{(h)}$ coincide.
\end{lemm}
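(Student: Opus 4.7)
The strategy is to follow, step by step, the treatment of the simple (dissipative) case in \cite{4Schmidt1} and to verify that each step depends only on the lopsidedness estimate $2\gamma_h>\sum_{\mathbf{j}}|h_\mathbf{j}|$ rather than on the combinatorial simplicity of $h$. The lopsidedness is precisely what makes the toppling strictly dissipative, since the strict positivity of $\gamma_h-\sum_{\mathbf{j}\neq\mathbf{0}}|h_\mathbf{j}|$ guarantees that every toppling loses a definite positive amount of mass (cf.\ \eqref{eq:topplingrule} and property (P2)). Once this is in place, the finite-volume burning argument, the infinite-volume exhaustion, and the comparison with the algebraic side transfer verbatim.

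I would prove (3) first, as (1) is deduced from it. Given $v\in\ell^\infty(\mathbb{Z}^d,\mathbb{Z})$, exhaust $\mathbb{Z}^d$ by an increasing sequence $F_n\Subset\mathbb{Z}^d$. Shift $v|_{F_n}$ by a sufficiently large multiple of a maximal element of $\mathcal{R}_{F_n}$ so that the result lies in $\mathbb{Z}_+^{F_n}$ and already satisfies the burning condition on every nonempty $E\subseteq F_n$. Stabilise in finite volume to obtain $w_n\in\mathcal{R}_{F_n}$ together with a toppling vector $m_n\in\mathbb{Z}_+^{F_n}$ such that $w_n-v|_{F_n}=\Delta^{h,F_n} m_n=(h\cdot m_n)|_{F_n}$ (modulo boundary effects). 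Using strict dissipativity, I would bound $\|m_n\|_\infty$ uniformly in $n$, and then extract a pointwise subsequential limit $(w,m)$ by the diagonal procedure. That $w$ continues to satisfy the burning criterion \eqref{eq:infiniteba} in infinite volume is a local check on each finite $E$, so $w\in\mathcal{R}^{(h)}$, and $w-v=h\cdot m$ by continuity of the identity $\tilde{h}(\bar\sigma)(m)=h\cdot m$ on $\ell^\infty(\mathbb{Z}^d,\mathbb{Z})$.

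Assertion (1) then drops out from (3): given $x\in X_{\tilde h}$, lift it to $\hat x\in\ell^\infty(\mathbb{Z}^d,[0,1))$ and set $v=\tilde h\cdot\hat x\in\ell^\infty(\mathbb{Z}^d,\mathbb{Z})$; a short computation using $h\cdot w^h=\delta_\mathbf{0}$ and \eqref{eq:xih} shows $\xi_h(v)=x$. Apply (3) to replace $v$ by a recurrent $w$ with $w-v\in h\cdot\ell^\infty(\mathbb{Z}^d,\mathbb{Z})\subseteq\ker\xi_h$, so $\xi_h(w)=x$. For (2), assume $v,w\in\mathcal{R}^{(h)}$ with $v-w\in\mathbf{R}_d$ and $\xi_h(v)=\xi_h(w)$; by the third identity of \eqref{eq:kernel}, $v-w=h\cdot p$ for some $p\in\mathbf{R}_d$. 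Choose $F\Subset\mathbb{Z}^d$ containing $\operatorname{supp}(p)\cup\operatorname{supp}(v-w)$ together with a buffer of width $\operatorname{diam}(\operatorname{supp}(h))$. Using the burning criterion \eqref{eq:infiniteba} on $v$ and $w$ one finds that the truncations $v|_F$ and $w|_F$ represent the same class in the finite abelian group $\mathcal{R}_F\cong\mathbb{Z}^F/\Delta^{h,F}\mathbb{Z}^F$; since $\mathcal{R}_F$ is parametrised bijectively by $\{0,\dots,\gamma_h-1\}^F$, this forces $v|_F=w|_F$ and hence $v=w$.

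The final entropy equality follows by the standard covering-map argument: parts (1)--(3) imply that $\xi_h\colon\mathcal{R}^{(h)}\to X_{\tilde h}$ is a continuous, shift-equivariant, surjective map whose fibres are countable with only finitely many points in any cylinder, so $\textup{h}(\sigma_{\mathcal{R}^{(h)}})\ge\textup{h}(\alpha_{\tilde h})$; the reverse inequality follows from Dhar's asymptotic formula $\textup{h}(\sigma_{\mathcal{R}^{(h)}})=\lim_n|F_n|^{-1}\log\det\Delta^{h,F_n}$ together with the identification of this limit with the logarithmic Mahler measure of $h$ (equivalently of $\tilde h$). The hard part of the whole argument is the uniform bound on the toppling counts $m_n$ in the proof of (3); for simple $h$ this is a standard maximum principle for the discrete Laplacian, and for general $h\in\mathbf{P}_d$ one has to replace it by the observation that the operator $\gamma_h I-(\gamma_h I-\Delta^{h,F_n})$ has an $\ell^\infty$-inverse of norm controlled by the lopsidedness gap $2\gamma_h-\sum_{\mathbf{j}}|h_\mathbf{j}|>0$, uniformly in $n$.
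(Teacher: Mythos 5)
Your proposal takes the same route as the paper, which itself gives no detailed proof but simply observes that the arguments of Schmidt--Verbitskiy for simple $h$ transfer verbatim once one has (i) the burning-algorithm description of the finite abelian groups $\mathcal{R}_F$ for general $h\in\mathbf{P}_d$ (due to Speer) and (ii) dissipativity, i.e.\ the $\ell^1$-summable homoclinic point coming from expansiveness/lopsidedness --- exactly the two ingredients you isolate, with the uniform bound on the toppling counts supplied by the diagonal-dominance (Varah-type) estimate that the paper also uses later. One small repair in your proof of (3): the preliminary shift must be by a multiple of $h$ (e.g.\ by $h\cdot(c\mathbf{1})$, which adds at least $c\,h(\bar{1})>0$ to every site of $F_n$), not by a multiple of a maximal element of $\mathcal{R}_{F_n}$, since otherwise the identity $w_n-v|_{F_n}=\Delta^{h,F_n}m_n$ fails; with that change the argument is the standard one.
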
  
In particular the above Lemma shows that $\mathcal{R}^{(h)}$ is an equal entropy cover of $X_{\tilde{h}}$. This statement is further improved in \cite[Thm. 6.6]{4Schmidt1},
where it is shown that $\mathcal{R}^{(h)}$ is a symbolic representation of $X_{\tilde{h}}$:
\begin{theo} \label{t:symbolicrepresentation}
For $h \in \mathbf{P}_d$
the subshift $\mathcal{R}^{(h)}$ admits a unique measure $\mu$ of maximal entropy for which the covering map $\xi_h$ restricted to $\mathcal{R}^{(h)}$ is almost one-to-one.
\end{theo}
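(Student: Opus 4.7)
The plan is to combine the uniqueness of the measure of maximal entropy on the algebraic side with the equal-entropy property of $\xi_h$ (Lemma \ref{l:propxi}) and the fiber description in \eqref{eq:kernel}, in order to force every measure of maximal entropy on $\mathcal{R}^{(h)}$ to concentrate on the set of points with unique preimage.

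First, I observe that $\tilde{h}$ is lopsided since its coefficients are simply the reflected coefficients of $h$, so by Remark \ref{r:dompolexpansive} $\tilde{h}$ is expansive and the Haar measure $\lambda=\lambda_{X_{\tilde{h}}}$ is the unique $\alpha_{\tilde{h}}$-invariant measure of maximal entropy on $X_{\tilde{h}}$. Existence of a measure of maximal entropy $\mu$ on the compact subshift $\mathcal{R}^{(h)}$ follows from the variational principle, and by Lemma \ref{l:propxi} the factor map $\xi_h$ preserves topological entropy. Hence $(\xi_h)_{\ast}\mu$ is $\alpha_{\tilde{h}}$-invariant with entropy $\textup{h}(\alpha_{\tilde{h}})$, and uniqueness on $X_{\tilde{h}}$ forces $(\xi_h)_{\ast}\mu=\lambda$ for every such $\mu$.

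Second, to show almost injectivity, I would apply the Abramov-Rohlin formula to $\xi_h$,
\[
\textup{h}_\mu(\sigma)=\textup{h}_\lambda(\alpha_{\tilde{h}})+\textup{h}_\mu\bigl(\sigma\mid \xi_h^{-1}\mathcal{B}(X_{\tilde{h}})\bigr),
\]
so that the equality of the two entropies forces the conditional fiber entropy on the right to vanish. By \eqref{eq:kernel}, any two distinct $v_1,v_2\in \mathcal{R}^{(h)}$ in the same fiber satisfy $v_1-v_2=\tilde{h}\cdot m$ for some $m\in \ell^\infty(\mathbb{Z}^d,\mathbb{Z})$, and Lemma \ref{l:propxi}(2) rules out the finitely-supported case, so any such difference must have infinite support. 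I would then show, via a cylinder-based relative Shannon-McMillan argument as in the penultimate section of \cite{4Schmidt1}, exploiting the $\ell^1$-summability of the homoclinic point $w^{\tilde{h}}$ to control the tails of the differences $\tilde{h}\cdot m$, that a positive-measure set of fibers of cardinality at least two would contradict the vanishing of the conditional entropy. Consequently the set $Y'=\{v\in \mathcal{R}^{(h)}:\xi_h^{-1}(\xi_h(v))\cap \mathcal{R}^{(h)}=\{v\}\}$ has $\mu(Y')=1$.

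Finally, almost injectivity yields uniqueness: the restriction of $\xi_h$ to $Y'$ is a Borel isomorphism onto a conull subset of $X_{\tilde{h}}$, so any measure of maximal entropy on $\mathcal{R}^{(h)}$ is the unique lift of $\lambda$ through this Borel isomorphism, giving $\mu_1=\mu_2$ for any two such measures. The main obstacle is the passage from vanishing conditional fiber entropy to singleton fibers, since in principle zero conditional entropy is compatible with countable deterministic fibers. The inputs that close this gap are precisely Lemma \ref{l:propxi}(2) (excluding finite-support differences) and the $\ell^1$-decay of $w^{\tilde{h}}$ (controlling the tails of infinite-support differences), both of which carry over from the simple-polynomial case treated in \cite{4Schmidt1} to general $h\in \mathbf{P}_d$ without modification.
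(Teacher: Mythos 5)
Your overall strategy (variational principle upstairs, uniqueness of Haar measure downstairs, then a relative-entropy argument across $\xi_h$) is genuinely different from the paper's route, which does not argue via conditional entropy at all: as explained in Remark \ref{r:analogy} and the discussion following Definition \ref{d:dissipative}, the measure $\mu$ is constructed explicitly as the unique accumulation point of the Haar measures $\mu_{F_j}$ on the finite abelian groups $\mathcal{R}^{(h)}_{F_j}$ as $F_j\uparrow\mathbb{Z}^d$, with convergence and the almost one-to-one property coming from the finite-volume group structure (each coset of the toppling matrix meets the recurrent class exactly once) together with dissipativity, i.e.\ the $\ell^1$-homoclinic point guaranteed by expansiveness; maximality and uniqueness of the entropy are then obtained by the counting arguments of \cite{4Redig2} and \cite{4Schmidt1}. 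Within your route there are two genuine gaps. First, ``$\xi_h$ preserves topological entropy, hence $(\xi_h)_{*}\mu$ has entropy $\textup{h}(\alpha_{\tilde h})$'' is a non sequitur: equality of \emph{topological} entropies of an extension and its factor does not imply that the pushforward of a measure of maximal entropy is again of maximal entropy, since the fibers may carry entropy for particular invariant measures. To repair this you need the fiber-counting input: two points of $\mathcal{R}^{(h)}$ in a common $\xi_h$-fiber which agree on an annulus $Q_{j+N}\setminus Q_j$ must agree on $Q_j$, because otherwise their difference would be a finitely supported element of $\ker\xi_h$, hence lie in $h\cdot\mathbf{R}_d$ by \eqref{eq:kernel}, contradicting Lemma \ref{l:propxi}(2). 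This bounds the relative topological entropy of the fibers by zero and only then forces $(\xi_h)_{*}\mu=\lambda_{X_{\tilde h}}$.

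Second, and more seriously, the passage from vanishing conditional fiber entropy to $\mu(Y')=1$ is not an argument: zero relative entropy is perfectly compatible with every fiber containing two (or countably many) points --- any finite group extension is a counterexample to the implication you invoke --- and neither Lemma \ref{l:propxi}(2) nor the $\ell^1$-decay of $w^{\tilde h}$ by themselves upgrade ``deterministic fibers'' to ``singleton fibers''. This is exactly the hard content of \cite[Thm.~6.6]{4Schmidt1}, and there it is obtained not from conditional entropy but from the finite-volume Haar measures and dissipativity described above; as written, your proof outsources precisely this step to ``as in \cite{4Schmidt1}'' while the surrounding Abramov--Rohlin scaffolding does not reduce the problem to anything those methods actually prove. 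The final step --- deducing uniqueness from the almost injectivity of $\xi_h$ with respect to every measure of maximal entropy --- is sound once the two gaps are closed.
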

\begin{rema}\label{r:analogy}
The second statement of the above Theorem is based on \cite[Prop. 3.2 and Thm. 3.1]{4Redig2}. In this paper the authors construct the measure $\mu$ on $\mathcal{R}^{(h)}$ as unique accumulation point of any sequence $\mu_{j}$ of Haar measures on $\mathcal{R}^{(h)}_{F_j}$ with $F_j \Subset \mathbb{Z}^d$ and $F_j \uparrow \mathbb{Z}^d$. The convergence is achieved by two properties. First, the existence of the unique Haar measures $\mu_{j}$ on $\mathcal{R}^{(h)}_{F_j}$, and second, from the so-called {\it dissipativity} of the ASM $\mathcal{R}^{(h)}$.
\end{rema} 
\begin{defi}\label{d:dissipative}
An abelian sandpile model $\mathcal{R}^{(h)}$ determined by the sandpile polynomial $h\in \mathbf{P}_d$ is called {\it dissipative} if there exists a (unique) $w^h \in \ell^1(\mathbb{Z}^d,\mathbb{Z})$ such that $h\cdot w^h=\delta_\mathbf{0}$.
\end{defi}
The first property follows from the fact that $\mathcal{R}^{(h)}_{F_j}$ is a compact abelian group for any $F_j\Subset \mathbb{Z}^d$ and the second from $h$ being expansive (cf. Remark \ref{r:dompolexpansive} and \eqref{eq:homoclinicequation2}). These are the crucial properties which allow to prove the extension of the statements of Lemma \ref{l:propxi} and Theorem \ref{t:symbolicrepresentation} from simple $h$ (for which they were originally proved) to general $h\in \mathbf{P}_d$ in a completely analogous way. 

\section{Generalisations}\label{s:generalisations}
So far we have seen that for any sandpile polynomial $h \in \mathbf{P}_d, d \ge 1$ and $F \Subset \mathbb{Z}^d$
\begin{enumerate}
\item the recurrent configurations $\mathcal{R}_F^{(h)}$ form an abelian group and
\item $\mathcal{R}^{(h)}$ is a symbolic representation of $X_{\tilde{h}}$.
\end{enumerate}
In this section we want to find other classes of polynomials which fulfil the above properties. Before proving our results we consider two simple examples in order to demonstrate the idea of our approach. For property (1) we choose the one-dimensional BTW model, mentioned in Section \ref{s:intro}, in finite volume $F$ as example and show how to realise it as abelian subgroup of an ASM $\mathcal{R}_F$. Similar to the transition of Dhar's model from finite to infinite volume, presented in \cite{4Redig2}, we use a sequence of measures on the abelian groups in finite volume $F$ to find the unique measure of maximal entropy in infinite volume, by taking the limit along $F \uparrow \mathbb{Z}^d$. Nevertheless, we first consider both problems separately. To avoid difficulties of non-dissipativity in infinite volume (cf. Section \ref{s:intro}) we need to vary the example from above when investigating property (2). 

\subsection{Motivation}
\begin{exam} \label{ex:BTWex}
For the description of the one-dimensional BTW model we fix $N \ge 1$, consider the finite set $F=\{1,\dots,N\}\subseteq \mathbb{Z}$, the Laurent polynomials $f=1-u,g= -u^{-1}+1 \in \mathbf{R}_1$ and the corresponding ($|F|\times|F|$)-matrices $\Delta^f,\Delta^g$ defined as in \eqref{eq:topplingmatrix}. Finally we set $\Delta'=\Delta^g  \Delta^f$ and note that for $ i,j \in F$ 
\[ 
\Delta'_{ij}=
\begin{cases}
2 & \textup{if} \enspace i=j \neq N \\
1 & \textup{if} \enspace i=j = N \\
-1 & \textup{if} \enspace |i-j|=1 \\
 0 & \textup{otherwise}.
\end{cases} 
\]
The BTW model is also a sandpile model but the toppling condition differ from those in Dhar's model. Sandgrains are dropped onto sites in $F$ thereby creating piles. A configuration $v \in \mathbb{Z}_+^F$, where $v_n\ge 0$ represents the height of the sandpile at site $n \in F$, is said to be {\it stable} if for every $n\in F$
\begin{equation*}%\label{eq:BTW}
(\Delta^g  v)_n=\left.
\begin{cases} 
v_n-v_{n+1} & \textup{if}\enspace 1 \le n < N 
\\ v_n & \textup{if}\enspace n=N 
\end{cases} \right\}
< \Delta'_{nn},
\end{equation*} i.e., if the height of a sandpile differs from the height of its right neighbour by less than 2 and the sandpile at the right boundary has height zero. Obviously, $v$ gets {\it unstable} at site $k$, say, if 
\begin{equation}\label{eq:BTWunstable}
(\Delta^g  v)_k \ge \Delta'_{kk}.
\end{equation}
The unstable site $k$ then topples and one grain tumbles to the lower level to the right. At the boundary site $k=N$ the grain leaves the system. Formally, the configuration $T_k(v)$ after the toppling is given by 
\begin{equation}\label{eq:BTWtoppling}
T_k(v)_j=(v-\Delta^f_{\cdot k})_j.
\end{equation}
The toppling process will terminate as grains can leave the system. One of our interests in such a system lies in its so-called {\it recurrence class}. This means we want to understand how the system evolves when we keep adding sandgrains to it. The solution to this question is already mentioned in \cite{BTW1}: the system tends to a unique state, namely to the unique configuration $w$ with the property that $(\Delta^g  w)_n=\Delta'_{nn}-1, n \in F$, or equivalently, $w_n=N-n$ for every $n\in F$.
\end{exam}

We now highlight the somewhat hidden structure in Example \ref{ex:BTWex}. First note that the matrix $\Delta'$ defines a more general toppling matrix than we have introduced at the beginning of Section \ref{s:ASM}. More precisely, the condition (P2) is not satisfied, since $\sum_{j\in F}\Delta'_{ij}=0$ for $2\leq i \le N$. As shown in \cite{Speer} the matrix $\Delta'$ still determines an ASM $\mathcal{R}_F$ on $F$ with $\mathcal{R}_F\subset \mathcal{S}_F=\{ v \in \mathbb{Z}_+^F: 0 \le v_i < \Delta'_{ii}\ \textup{for all}\ i \in F \}$, and the set $\mathcal{R}_F$ is a finite abelian group characterised by the burning algorithm presented in \eqref{eq:finiteba}. Applying it to this special example allows us to identify the group $\mathcal{R}_F$ as singleton, whose only element $\Delta^g w$, given above, happens to be a multiple of $\Delta^g$. Another way to look at the last observation would be that the multiples of $\Delta^g$ form an abelian subgroup 
\begin{equation}\label{eq:WF}
\mathcal{W}_F=\{ \Delta^g  v \in \mathcal{R}_F:v \in \mathbb{Z}^F\} 
\end{equation} 
of $\mathcal{R}_F$. Neglecting the triviality of this statement, for the moment, we derive from this that the set
\begin{equation}\label{eq:VF}
\mathcal{V}_F=\{ v \in \mathbb{Z}^F: \Delta^g v \in \mathcal{R}_F \} \end{equation}
also defines an abelian group isomorphic to $\mathcal{W}_F$, since $\det(\Delta^g)\neq 0$. As we have seen, the group $\mathcal{V}_F$ arises as recurrence class of the dynamics driven by the laws of the toppling condition \eqref{eq:BTWunstable} and rule \eqref{eq:BTWtoppling}. A nontrivial example of the sets $\mathcal{W}_F$ and $\mathcal{V}_F$ is given in Remark \ref{r:finitecase}.

One goal of this section is to find classes of Laurent polynomials $f$ and $g$ which allow the realisation of the heuristic idea described above. Depending on the Laurent polynomial $f$ and $g$ these BTW-like models potentially provide a variety of physical toy models with implicit abelian structure. 
%\medskip
 
With regards to the above discussion in finite volume the analogous idea in infinite volume would be to consider a Laurent polynomial $h \in \mathbf{P}_d, d \ge 1$ of the form $h=fg$ with $f,g \in \mathbf{R}_d$ and to determine if the multiples of $g$ in $\mathcal{R}^{(h)}$ serve as a symbolic representation for $X_{\tilde{f}}$. But before we get into action we familiarise ourselves with that approach by first preparing the proper setting and then investigating an easy example. 

Fix $d \ge 1$ and let $h=fg\in \mathbf{P}_d$ with $f,g \in \mathbf{R}_d$. Note that since $h$ is expansive, $f$ and $g$ are as well. Moreover, the homoclinic point $w^{h}$ of $h$ satisfies $w^{h}=w^{f}\cdot w^{g}$, where $w^{f}$ and $w^{g}$ are the homoclinic points of $f$ and $g$, respectively (cf. \eqref{eq:homoclinicequation2}). Therefore and by \eqref{eq:xih} we obtain $\bar{\xi}_h=\bar{\xi}_f\circ \bar{\xi}_g$ and $\xi_h=\rho\circ (\bar{\xi}_f\circ \bar{\xi}_g)$. Recall from \eqref{eq:kernel} that $\ker\xi_g=g\cdot \ell^{\infty}(\mathbb{Z}^d,\mathbb{Z})$ for any expansive $g \in \mathbf{R}_d$. From the previous section we know that $\mathcal{R}^{(h)}$ is a symbolic representation of $X_{\tilde{h}}$ via the covering map $\xi_h$. Since $X_{\tilde{f}}  \subset X_{\tilde{h}}$ it is tempting to argue that there is also a symbolic representation of $X_{\tilde{f}}$ within $\mathcal{R}^{(h)}$. We take a look at 
\begin{equation}\label{eq:Wgh}
\mathcal{W}^{(h)}_g=\mathcal{R}^{(h)} \cap \ker \xi_g=\{ g\cdot u \in \mathcal{R}^{(h)}: u \in \ell^\infty(\mathbb{Z}^d,\mathbb{Z})\}
\end{equation} 
and observe that $\xi_h(\mathcal{R}^{(h)} \cap \ker \xi_g)=\xi_f(\mathcal{V}^{(h)}_g)\subseteq X_{\tilde{f}}$, where
\begin{equation}\label{eq:Vgh}
\mathcal{V}^{(h)}_g=\bar{\xi_g}(\mathcal{W}^{(h)}_g)=\{u \in \ell^\infty(\mathbb{Z}^d,\mathbb{Z}): g\cdot u \in \mathcal{R}^{(h)}\}.
\end{equation}

We will show that the set $\mathcal{W}^{(h)}_g$ of multiples of $g$ in $\mathcal{R}^{(h)}$ is indeed an equal entropy cover of $X_{\tilde{f}}$. In order to find the unique measure of maximal entropy on $\mathcal{W}^{(h)}_g$ we depend on the existence of the Haar measures on the finite abelian subgroups $\mathcal{W}_F$ (cf. \eqref{eq:WF}) for $F\Subset \mathbb{Z}^d$. Under specific conditions we can prove that the BTW-like model $\mathcal{W}_F$ in finite volume extends to the symbolic representation $\mathcal{W}^{(h)}_g$ of $X_{\tilde{f}}$ when $F \uparrow \mathbb{Z}^d$.   

In order to get a better understanding of $\mathcal{W}^{(h)}_g$ we first study $\mathcal{V}^{(h)}_g$. To this end we start with a simple example in dimension $d=1$. 
\begin{exam}\label{e:BTWlikeex}
Let $f=-u^{-1}+2,\ g=2-u \in \mathbf{R}_1$ and $h=fg = -2u^{-1}+5 -2u \in \mathbf{P}_1$. We want to determine $\mathcal{V}^{(h)}_g=\bar{\xi}_g(\mathcal{R}^{(h)}_\infty \cap \ker \xi_g)\subseteq \ell^\infty(\mathbb{Z},\mathbb{Z})$. Since $d=1$ and $h$ is simple we can easily deduce from the burning algorithm \eqref{eq:infiniteba} in infinite volume that each recurrent configuration in $\mathcal{R}^{(h)}$ can be described as a sequence of labels of a bi-infinite path in the following directed graph $G^{(h)}$:
\begin{center}
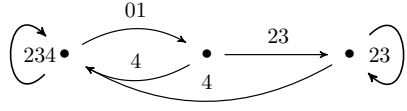

\scalebox{.75}{
	\begin{tikzpicture}[->,>=stealth',shorten >=2pt,auto,node distance=25mm, inner sep=2mm, semithick]
\node[] (1) {$\bullet$} {};
\node[] (2) [right of =1] {$\bullet$} {};
\node[] (3) [right of =2] {$\bullet$} {};
\Loop[dist=1.2cm,dir=WE,label=$234$,labelstyle=right](1) %label= mag keine Beistriche im Argument 
\Loop[dist=1.2cm,dir=EA,label=$2 3$,labelstyle=left](3)
\path
(1) edge [bend left] node[above] {$01$} (2)
(2) edge [bend left] node[above] {$4$} (1)
(2)	edge node[above] {$2 3$} (3)
(3) edge [bend left] node[above] {$4$} (1);
	\end{tikzpicture}}
	\captionof{figure}[\ The graph representing the configurations in $\mathcal{R}^{(h)}$.]{The graph representing the configurations in $\mathcal{R}^{(h)}$.}
\vspace{-3mm}
	\end{center}
\begin{rema}\label{r:forbidden}
We immediately see that the word $00$ does not occur in $G^{(h)}$ and hence is forbidden in $\mathcal{R}^{(h)}$. A more general observation is that whenever the symbol 0 or 1 shows up at site $n$, say, then we must see the symbol $4$ at some site $k > n$ before we could see 0 or 1 again. Obviously, all other finite words in the full 5-shift $\{0,\dots,4\}^\mathbb{Z}$ which respect this law are allowed.
\end{rema}
Clearly, $v \in \mathcal{V}^{(h)}_g$ if and only if $g \cdot v \in \mathcal{R}^{(h)}$. Note that 
\begin{equation}\label{eq:E0}
0\le (g \cdot v)_n=2 v_n - v_{n-1} \le 4
\end{equation}
for every $n\in \mathbb{Z}$. 
For $v \in \ell^\infty(\mathbb{Z},\mathbb{Z})$ we set 
\begin{equation*}%\label{eq:normplusminus}
\| v\|^+_\infty= \sup\{ |v_n|: v_n >0 \}\qquad \textup{and}\qquad \|v\|^-_\infty= \sup\{ |v_n|: v_n <0 \}.
\end{equation*}
We first claim that if $u \in \mathcal{V}^{(h)}_g$ then $1 \le u_n \le 4$ for every $n \in \mathbb{Z}$. Observe that if $u_n=\|u\|^+_\infty$ for some $n \in \mathbb{Z}$ then $4 \ge(g\cdot u)_n = 2 \|u\|_\infty^+ -u_{n-1} \ge \|u\|_\infty^+$. Similarly, if $u_n = -\|u\|^-_\infty$ then $0 \ge -(g\cdot u)_n = 2 \|u\|_\infty^- + u_{n-1} \ge \|u\|_\infty^- \ge 0$. This proves that $0\le u_n \le 4$ for all $n \in \mathbb{Z}$ if $u \in \mathcal{V}^{(h)}_g$. Suppose $u \in \mathcal{V}^{(h)}_g$ such that there exists an $n \in \mathbb{Z}^d$ with $u_n=0$. Then, by \eqref{eq:E0}, $u_{n-1}=0$. Repeating this argument at site $n-1$ we see that $(g \cdot u)_n=(g\cdot u)_{n-1}=0$. In other words $g\cdot u$ contains the forbidden string 00 and hence does not lie in $\mathcal{R}^{(h)}$, which contradicts the fact that $u \in \mathcal{V}^{(h)}_g$. Our next claim is that each element in $\mathcal{V}^{(h)}_g$ is described precisely by a sequence of labels of a bi-infinite path in the following directed graph $G^{(h)}_g$:

\begin{center}
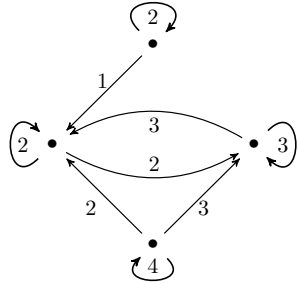

\scalebox{.75}{
\begin{tikzpicture}[->,>=stealth',shorten >=2pt,auto,node distance=25mm, inner sep=1mm, semithick, main node/.style={circle,draw}] %,font=\Large\bfseries}]
\node[] (A) {$\bullet$}; \node (B) [below right of=A] {$\bullet$}; \node[circle,] (C) [below left of=A] {$\bullet$}; \node[] (D) [below left of=B] {$\bullet$};
\Loop[dist=1cm,dir=NO,label=$2$,labelstyle=below](A) %usepackage{tikz-graph}
\Loop[dist=0.99cm,dir=WE,label=$2$,labelstyle=right](C) 
\Loop[dist=0.99cm,dir=EA,label=$3$,labelstyle=left](B)
\Loop[dist=0.8cm,dir=SO,label=$4$,labelstyle=above](D); 
\path 
(A) edge node[above] {1} (C) 
(B) edge [bend right] node {3} (C)
(C) edge [bend right] node[above] {2} (B) 
(D) edge node {2} (C) 
(D) edge node[below] {3} (B);
	\end{tikzpicture}}
	\captionof{figure}[\ The graph $G^{(h)}_g$ representing the configurations in $\mathcal{V}^{(h)}_g$.]{The graph $G^{(h)}_g$ representing the configurations in $\mathcal{V}^{(h)}_g$.}
\end{center}
To see this we look at all possible values for $u \in \mathcal{V}^{(h)}_g$ at some site $n \in \mathbb{Z}$ and their impact on the values of the neigbouring sites according to \eqref{eq:E0}. If $u \in \mathcal{V}^{(h)}_g$ then the following conditions are satisfied:
\begin{eqnarray}\label{eq:E1}
u_n=1 &\Rightarrow & u_{n-1} \in \{1,2\}  \wedge u_{n+1} \in \{1,2\}\wedge (g\cdot u)_n \in \{0,1\}, \\ \label{eq:E2}
u_n=2 &\Rightarrow & u_{n-1} \in \{1,2,3,4\}  \wedge u_{n+1} \in \{1,2,3\}\wedge (g\cdot u)_n \in \{0,1,2,3\}, \\ \label{eq:E3}
u_n=3 &\Rightarrow & u_{n-1} \in \{2,3,4\}  \wedge u_{n+1} \in \{2,3\} \wedge (g\cdot u)_n \in \{2,3,4\}\\ \label{eq:E4}
u_n=4 &\Rightarrow & u_{n-1} =4  \wedge u_{n+1} \in \{2,3,4\}\wedge (g\cdot u)_n =4.
\end{eqnarray} 
These conditions are necessary to guarantee that $0 \le (g\cdot u)_n \le 4$. Taking further into account that $g\cdot u$ must lie in $\mathcal{R}^{(h)}$ we see that if $u_n=1$ then $u_k=2$ for all $k<n$. Indeed, suppose that $u_n=1$ and $l=\min\{j <n:u_j \ne 2\}$ exists. If $u_l =1$ then $(g\cdot u)_n\in \{0,1\}$, $(g\cdot u)_j\ne 4$ for all $l<j<n$ and $(g\cdot u)_{l} \in \{0,1\}$, by (\ref{eq:E1}) and (\ref{eq:E2}). If $u_l >2$ (hence $l\neq n-1$ and $u_{l+1}=2$ by (\ref{eq:E3}) and (\ref{eq:E4})) then $(g\cdot u)_n=0$, $(g\cdot u)_j\ne 4$ for all $l+1<j<n$ and $(g\cdot u)_{l+1} \in \{0,1\}$. In either case $g\cdot u$ contains a forbidden word in $\mathcal{R}^{(h)}$ as described in Remark \ref{r:forbidden}, which is impossible. Hence we can replace (\ref{eq:E1}) by
\begin{equation}\label{eq:E5}
u_n=1 \Rightarrow u_k=2\ \textup{for all}\ k<n \wedge u_{n+1}=2 \wedge (g\cdot u)_n \in \{0,1\}
\end{equation}
Examining carefully (\ref{eq:E2})--(\ref{eq:E5}) we recognise that the graph $G^{(h)}_g$ represents all configurations satisfying these conditions. Conversely, by a little exercise one is easily convinced that for every configuration $u$ obtained from the graph $G^{(h)}_g$ the configuration $g\cdot u$ does not contain any forbidden subconfigurations (cf. Remark \ref{r:forbidden}), so that $g\cdot u \in \mathcal{R}^{(h)}$, or equivalently, $u \in \mathcal{V}^{(h)}_g$. It follows that each $u\in \mathcal{V}^{(h)}_g$ is given by a sequence of labels of a bi-infinite path in the graph $G^{(h)}_g$, and vice versa. 

Since $\bar{\xi}_g$ restricted to $\mathcal{W}_g^{(h)}$ is a homeomorphism, with $g$ as inverse, and $\bar{\xi}_g(\mathcal{W}_g^{(h)})=\mathcal{V}^{(h)}_g$ there is no reason to favour $\mathcal{W}^{(h)}_g=g\cdot\mathcal{V}^{(h)}_g=\mathcal{R}^{(h)} \cap \ker \xi_g$ over $\mathcal{V}^{(h)}_g$. For the sake of completeness we give a description of configurations in $\mathcal{W}^{(h)}_g$. As one can easily deduce from $G^{(h)}_g$ the graph $G'^{(h)}_g$ representing the configurations in $\mathcal{W}^{(h)}_g$ is given by 
\begin{center}
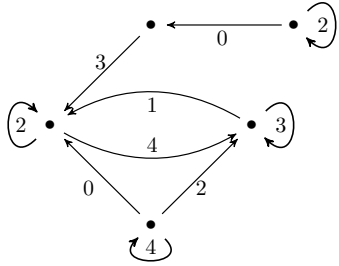

\scalebox{.75}{
\begin{tikzpicture}[->,>=stealth',shorten >=2pt,auto,node distance=25mm, inner sep=1mm, semithick, main node/.style={circle,draw}] %,font=\Large\bfseries}]
\node[] (A) {$\bullet$}; \node (B) [below right of=A] {$\bullet$}; \node[circle,] (C) [below left of=A] {$\bullet$}; \node[] (D) [below left of=B] {$\bullet$}; \node[] (E) [right of=A] {$\bullet$};
\Loop[dist=1cm,dir=EA,label=$2$,labelstyle=left](E) %usepackage{tikz-graph}
\Loop[dist=0.99cm,dir=WE,label=$2$,labelstyle=right](C) 
\Loop[dist=0.99cm,dir=EA,label=$3$,labelstyle=left](B)
\Loop[dist=0.8cm,dir=SO,label=$4$,labelstyle=above](D); 
\path 
(A) edge node[above] {3} (C) 
(B) edge [bend right] node {1} (C)
(C) edge [bend right] node[above] {4} (B) 
(E) edge [] node {0} (A) 
(D) edge node {0} (C) 
(D) edge node[below] {2} (B);
	\end{tikzpicture}}
	\captionof{figure}[\ The graph $G'^{(h)}_g$ representing the configurations in $\mathcal{W}^{(h)}_g$.]{The graph $G'^{(h)}_g$ representing the configurations in $\mathcal{W}^{(h)}_g$.}
\end{center}
\end{exam}

\begin{rema}\label{r:finitecase}
Later on we want to construct, for particular Laurent polynomials $f,g$ and $h=fg$, a unique measure of maximal entropy on $\mathcal{V}^{(h)}_g$ (resp. $\mathcal{W}^{(h)}_g$) as limit of measures on $\mathcal{V}_F$ (resp. $\mathcal{W}_F$), for $F \Subset \mathbb{Z}^d$, where $\mathcal{V}_F$ (resp. $\mathcal{W}_F$) are defined in \eqref{eq:VF} (resp. \eqref{eq:WF}). In spite of this it might be useful to have at least once a concrete description of the finite abelian groups $\mathcal{V}_F$ and $\mathcal{W}_F$. 

For $f$ and $g$ as in Example \ref{e:BTWlikeex} and $F=\{1,\dots,N\}$ the configurations of the sets $\mathcal{V}_F$ and $\mathcal{W}_F$ can be represented by paths of length $N$ starting at $\star$ in the following graphs in Figure \ref{fig:VFandWF}. These sets are found analogously to Example \ref{ex:BTWex} and by applying similar considerations used to construct the graphs $G^{(h)}_g$ and $G'^{(h)}_g$.
\medskip 

\begin{center}
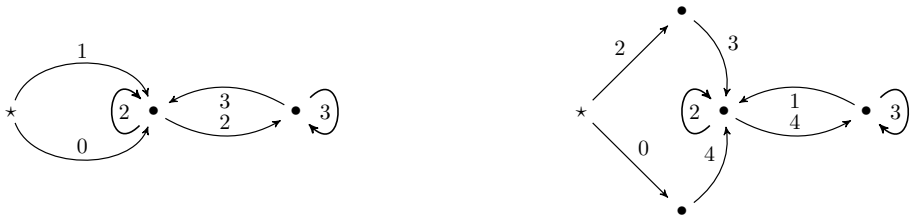

\scalebox{.75}{
\begin{tikzpicture}[->,>=stealth',shorten >=2pt,auto,node distance=25mm, inner sep=1mm, semithick, main node/.style={circle,draw}] %,font=\Large\bfseries}]
\node[] (A) {$\star$};
%node (Z) [right of=A] {$\star$}; 
\node (C) [right of=A] {$\bullet$}; 
\node (B) [right of=C] {$\bullet$};
%\node (D) [right of=A] {$\bullet$};
\node (Z) [right of=B] {};  
\node (E) [right of=Z] {$\star$};
\node (F) [right of=E] {$\bullet$};
\node (G) [right of=F] {$\bullet$};
\node (H) [below right of=E] {$\bullet$}; 
\node(I) [above right of=E] {$\bullet$};   
\Loop[dist=0.99cm,dir=WE,label=$2$,labelstyle=right](C) 
\Loop[dist=0.99cm,dir=EA,label=$3$,labelstyle=left](B);
\Loop[dist=0.99cm,dir=WE,label=$2$,labelstyle=right](F) 
\Loop[dist=0.99cm,dir=EA,label=$3$,labelstyle=left](G);
\path 
(A) edge [bend right=70] node[] {0} (C) 
(A) edge [bend left=70] node[] {1} (C) 
(B) edge [bend right] node {3} (C)
(C) edge [bend right] node {2} (B)
(F) edge  [bend right] node {4} (G)
(E) edge  node {0} (H)
(E) edge  node {2} (I) 
(I) edge  [bend left] node {3} (F)
(H) edge  [bend right] node {4} (F)
(G) edge [bend right] node {1} (F); 
	\end{tikzpicture}}
	\captionof{figure}[\ The graphs of the configurations in $\mathcal{V}_F$ and $\mathcal{W}_F$.]{The graphs of the configurations in $\mathcal{V}_F$ (left) and $\mathcal{W}_F$ (right).} \label{fig:VFandWF}
\end{center}
\end{rema}

We now discuss the essence of Example \ref{e:BTWlikeex} and thereafter we prove our first general statement. Since $f=-u^{-1}+2$ is a sandpile polynomial we immediately see by applying the burning algorithm \eqref{eq:infiniteba} that a symbolic representation $\mathcal{R}^{(f)}$ of $X_{\tilde{f}}$ via $\xi_f$ is given by the full shift on two symbols (cf. \eqref{eq:Xh}). The subshift $\mathcal{W}^{(h)}_g$ is close to the full 2-shift in the sense that the shift $\sigma_{\mathcal{W}^{(h)}_g}$ on $\mathcal{W}^{(h)}_g$ has the same topological entropy, as is easily seen from the graph $G'^{(h)}_g$. Hence $\mathcal{W}^{(h)}_g$ is an equal entropy cover of $X_{\tilde{f}}$ via $\xi_{h}$, or equivalently, $\mathcal{V}^{(h)}_g$ is an equal entropy cover of $X_{\tilde{f}}$ via $\xi_{f}$. 

Although the description of $\mathcal{V}^{(h)}_g$ is far more complicated in the general setting, i.e., for nonsimple $h=fg \in \mathbf{P}_d$ with $d >1$, we can still apply the burning algorithm \eqref{eq:infiniteba} on $\mathcal{R}^{(h)}$ to characterise the configurations in $\mathcal{V}^{(h)}_g$: $u$ lies in $\mathcal{V}^{(h)}_g$ if and only if for any $F\Subset \mathbb{Z}^d$ there exists and $\mathbf{i} \in F$ such that
\begin{equation*}%\label{eq:indirectburning}
  (g\cdot u)_\mathbf{i} \ge - \sum_{\mathbf{j\neq i}, \mathbf{j} \in F}\Delta^{h,F}_{\mathbf{ji}}.
\end{equation*}
This follows from the fact that $g\cdot \mathcal{V}^{(h)}_g = \mathcal{W}^{(h)}_g \subseteq \mathcal{R}^{(h)}$. Furthermore, the above statement about equal entropy remains true in general. 

\subsection{Equal entropy covers} In this subsection we prove the following statement and find Laurent polynomials $f,g\in \mathbf{R}_d$ which satisfy the needed assumption that $fg\in \mathbf{P}_d$.
\begin{theo} \label{t:equalentropy}
Let $d\ge 1$ and $h=f g \in \mathbf{P}_d$ such that the Laurent polynomials $f$ and $g$ have no common factor. Then $\mathcal{W}^{(h)}_g=\mathcal{R}^{(h)} \cap \ker \xi_g$ is an equal entropy cover of $X_{\tilde{f}}$. 
\end{theo}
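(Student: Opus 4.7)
My plan is to decompose the proof into three parts. The first is a structural identification: using that expansiveness of $h$ forces $f$ and $g$ to be expansive as well, and that the unique $\ell^1$-solution of $h \cdot w = \delta_{\mathbf{0}}$ factors as $w^h = w^f \cdot w^g$ (since $(f \cdot w^f) \cdot (g \cdot w^g) = \delta_{\mathbf{0}}$ and such a solution is unique), a direct calculation using \eqref{eq:hproperties} yields
\[
\tilde{f}(\alpha) \circ \xi_h = \xi_g \quad \textup{on}\ \ell^\infty(\mathbb{Z}^d, \mathbb{Z}).
\]
Hence $\xi_h(v) \in X_{\tilde{f}}$ iff $v \in \ker \xi_g = g \cdot \ell^\infty(\mathbb{Z}^d, \mathbb{Z})$, so $\mathcal{W}^{(h)}_g = \mathcal{R}^{(h)} \cap \xi_h^{-1}(X_{\tilde{f}})$. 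This set is closed and shift-invariant (the kernel of $\xi_g$ is), and $\pi := \xi_h|_{\mathcal{W}^{(h)}_g}$ is therefore a continuous, shift-equivariant map into $X_{\tilde{f}}$.

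For surjectivity of $\pi$, given $x \in X_{\tilde{f}}$, I would lift to $\bar{x} \in [0,1)^{\mathbb{Z}^d}$ with $\rho(\bar{x}) = x$. Since $\tilde{f}(\alpha)x = 0$, the sequence $u_0 := f \cdot \bar{x}$ is integer-valued and $\xi_f(u_0) = \rho(w^f \cdot f \cdot \bar{x}) = \rho(\bar{x}) = x$. Applying Lemma \ref{l:propxi}(3) to $g \cdot u_0 \in \ell^\infty(\mathbb{Z}^d, \mathbb{Z})$ produces $m \in \ell^\infty(\mathbb{Z}^d, \mathbb{Z})$ with $w := g u_0 + h m = g \cdot (u_0 + f m) \in \mathcal{R}^{(h)}$, hence $w \in \mathcal{W}^{(h)}_g$, and $\xi_h(w) = \xi_f(u_0 + f m) = \xi_f(u_0) = x$ because $\xi_f(f m) = \rho(m) = 0$. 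The lower entropy bound $\textup{h}(\sigma_{\mathcal{W}^{(h)}_g}) \ge \textup{h}(\alpha_{\tilde{f}})$ follows immediately from $\pi$ being a factor map. For the upper bound I would combine Lemma \ref{l:propxi} (giving $\textup{h}(\sigma_{\mathcal{R}^{(h)}}) = \textup{h}(\alpha_{\tilde{h}})$) with the multiplicativity of the Mahler measure, $\textup{h}(\alpha_{\tilde{h}}) = \textup{h}(\alpha_{\tilde{f}}) + \textup{h}(\alpha_{\tilde{g}})$, and use coprimeness of $f, g$ to obtain the short exact sequence $0 \to X_{\tilde{g}} \to X_{\tilde{h}} \xrightarrow{\tilde{g}(\alpha)} X_{\tilde{f}} \to 0$, which splits the entropy additively. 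A Bowen-type inequality applied to $\xi_h$ above the closed invariant subset $X_{\tilde{f}} \subseteq X_{\tilde{h}}$ should then deliver $\textup{h}(\sigma_{\mathcal{W}^{(h)}_g}) \le \textup{h}(\alpha_{\tilde{f}})$.

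The principal obstacle is closing the Bowen fibre-entropy correction in this final step. Two elements $w, w' \in \pi^{-1}(x)$ lie in a single fibre of $\xi_h|_{\mathcal{R}^{(h)}}$ and hence differ by $h \cdot k$ with $k \in \ell^\infty(\mathbb{Z}^d, \mathbb{Z})$ bounded in sup-norm by $\|w^h\|_1 \gamma_h$ (from expansiveness of $h$ and $\|w - w'\|_\infty \le \gamma_h$), so a naive counting argument only bounds the correction by a constant, yielding at best $\textup{h}(\sigma_{\mathcal{W}^{(h)}_g}) \le \textup{h}(\alpha_{\tilde{f}}) + \textup{const}$. Reducing this correction to zero appears to require the almost-one-to-one property of $\xi_h$ from Theorem \ref{t:symbolicrepresentation}, combined with the clean additive splitting of entropies provided by coprimeness — and this is precisely where the hypothesis that $f$ and $g$ have no common factor is genuinely used.
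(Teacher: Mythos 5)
Your first half is sound and essentially matches the paper: the identity $\tilde{f}(\alpha)\circ\xi_h=\xi_g$ (equivalently $\xi_h(\mathcal{W}^{(h)}_g)\subseteq X_{\tilde{f}}$), the surjectivity argument via lifting $x$ to $\bar{x}\in[0,1)^{\mathbb{Z}^d}$, setting $u_0=f\cdot\bar{x}$ and repairing $g\cdot u_0$ into $\mathcal{R}^{(h)}$ with Lemma \ref{l:propxi}(3), and the resulting lower entropy bound are exactly the paper's steps. The problem is the upper bound, and you have correctly diagnosed that your own argument does not close: Bowen's inequality leaves a fibre-entropy correction, and neither the additivity of Mahler measures nor the almost-one-to-one statement of Theorem \ref{t:symbolicrepresentation} removes it. That theorem only asserts injectivity of $\xi_h$ on a set of full measure for the \emph{one} measure of maximal entropy on $\mathcal{R}^{(h)}$; to kill the conditional entropy in a variational-principle argument you would need $\xi_h$ to be almost injective for \emph{every} invariant measure supported on $\mathcal{W}^{(h)}_g$, which is not available. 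So as written your proof establishes only $\textup{h}(\sigma_{\mathcal{W}^{(h)}_g})\le \textup{h}(\alpha_{\tilde f})+\textup{const}$, which is not the statement.

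The paper closes the upper bound by a direct separated-set count rather than by entropy additivity. Fix a thick annulus $Q_{L+N}\setminus Q_L$ and consider the class $C^{(L,N)}(u)$ of configurations in $\mathcal{W}^{(h)}_g$ agreeing with $u$ there. Lemma \ref{l:nofmultiples} shows that two distinct members $v,w$ of such a class satisfy $\tilde{\iota}_{Q_L}(v-w)\notin f\cdot\mathbf{R}_d$: writing $v-w=g\cdot(v'-w')$, an identity $\tilde{\iota}_{Q_L}(v-w)=f\cdot m$ forces, by unique factorisation and the coprimality of $f$ and $g$, that $g$ divides $m$, so the truncated difference is a multiple of $h$ --- contradicting Lemma \ref{l:propxi}(2) after extending $v,w$ to recurrent configurations. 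This is the precise (and only) place the no-common-factor hypothesis enters. Lemma \ref{l:separatedimage} then upgrades this to: $\xi_f$ is injective on $\tilde{\iota}_{Q_L}(C^{(L,N)}(u))$ and its image is a $(Q_{L+M},\delta_f)$-separated subset of $X_{\tilde{f}}$, using expansiveness of $f$ and the decay of $w^g$ to show the truncated configurations stay $\delta_f$-apart somewhere in $Q_{L+M}$. Counting then gives $|\pi_{Q_L}(\mathcal{W}^{(h)}_g)|$ per boundary condition bounded by $s_{L+M}(\delta_f)$, hence $\textup{h}(\sigma_{\mathcal{W}^{(h)}_g})\le \textup{h}(\alpha_{\tilde f})$ exactly, with no additive error. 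If you want to salvage your outline, replace the Bowen step with this injection of annulus-conditioned cylinder classes into separated sets of $X_{\tilde f}$.
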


Before proving this theorem we need some notation and a little bit of preparation. Let $F\Subset \mathbb{Z}^d$ and define the embedding $\iota_F: \mathbb{Z}^F \to \ell^1(\mathbb{Z}^d,\mathbb{Z})$ by
\begin{equation}\label{eq:iota}
\iota_F(u)_\mathbf{j}=
\begin{cases}
u_\mathbf{j}& \ \textup{if} \enspace \mathbf{j}\in F \\
0 & \textup{otherwise}.
\end{cases}
\end{equation} 
For $A\subset \mathbb{Z}^F$ we set $\iota_F(A)=\{\iota_F(a):a\in A\}$. If it is clear from the context we omit the subscript $F$ and write simply $\iota$ instead of $\iota_F$. Define further $\tilde{\iota}_F: \ell^\infty(\mathbb{Z}^d,\mathbb{Z})\to \ell^1(\mathbb{Z}^d,\mathbb{Z})$ by 
\begin{equation*}%\label{eq:iotatilde}
\tilde{\iota}_F=\iota\circ\pi_F,
\end{equation*}
where $\pi_F:\ell^\infty(\mathbb{Z}^d,\mathbb{Z})\to\mathbb{Z}^F$ is the projection onto the coordinates in $F$. 

If $g$ is expansive then the shift $\alpha_{\tilde{g}}$ on $X_{\tilde{g}}$ is expansive which in turn guarantees that
\begin{equation} \label{eq:expansiveConstant}
\delta_g=\inf_{\bar{\mathbf{0}}\neq x \in X_{\tilde{g}}}\sup_{\mathbf{j}\in\mathbb{Z}^d} \pmb{|} x_\mathbf{j} \pmb{|}>0,
\end{equation}
where $\pmb{|} z \pmb{|}=\min\{|z-n|:n\in \mathbb{Z}\}, z\in \mathbb{T}=[0,1)$. Furthermore, for $0<\varepsilon < \delta_g$ the definition of $\bar{\xi_g}$ and the $\ell^1$-summability of the homoclinic point $w^{g}$ (cf. \eqref{eq:xih} and \eqref{eq:homoclinicequation2}) implies that there exists an $M=M_g\ge 1$ such that 
\begin{equation}\label{eq:xicontinuity}
Q_{M}\supset \textup{supp}(g) \qquad \text{and} \qquad
| \bar{\xi}_g(u)_\mathbf{0}-\bar{\xi}_g(v)_\mathbf{0}|< \varepsilon
\end{equation} 
for all $u,v \in \ell^\infty(\mathbb{Z}^d,\mathbb{Z})$ with $\pi_{Q_M}(u)=\pi_{Q_M}(v)$, where
\begin{equation}\label{eq:QM} 
Q_M=\{-M,\dots,M\}^d.
\end{equation}
In particular, for $u,v \in \ell^\infty(\mathbb{Z}^d,\mathbb{Z})$ and $L\ge 1$ we have that
\begin{equation}\label{eq:xicontinuity2}
\pi_{Q_{L+M}}(g\cdot u)=\pi_{Q_{L+M}}(g\cdot v)\qquad \textup{implies} \qquad\pi_{Q_{L}}(u)=\pi_{Q_{L}}(v).
\end{equation} 
Finally, we define for every $j,K\ge 1$ and $u\in \mathcal{W}^{(h)}_g$ the set $C^{(j,K)}(u) \subseteq\mathcal{W}^{(h)}_g$ by
\begin{equation}\label{eq:cylinder}
C^{(j,K)}(u)=\{ v \in \mathcal{W}_g^{(h)}: v_\mathbf{k}=u_\mathbf{k}\ \textup{for all}\ \mathbf{k}\in Q_{j+K}\setminus Q_j\}.
\end{equation} 

The following two lemmas are useful in order to determine the topological entropy $\textup{h}_{\textup{top}}(\sigma_{\mathcal{W}^{(h)}_g})$. For expansive Laurent polynomials $f,g$ we put $M_{f,g}=\max(M_g,M_f)$.
\begin{lemm}\label{l:nofmultiples}
Let $d \ge 1$ and $h=fg \in \mathbf{P_d}$ such that $f,g \in \mathbf{R}_d$ have no common factor. Let further $j\ge 1$ and $N\ge 3M_{f,g}$. If $u \in \mathcal{W}^{(h)}_g$ then any pair of distinct points $v,w \in C^{(j,N)}(u)$ satisfies that $\tilde{\iota}_{Q_j}(v-w) \notin (f)=f\cdot \mathbf{R}_d$.
\end{lemm}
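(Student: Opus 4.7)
The plan is to argue by contradiction. Assume $v, w \in C^{(j,N)}(u)$ are distinct with $\tilde{\iota}_{Q_j}(v-w) = f \cdot p$ for some nonzero $p \in \mathbf{R}_d$. Setting $d = v - w$, equation~\eqref{eq:kernel} supplies a unique integer lift $c = \bar{\xi}_g(d) \in \ell^{\infty}(\mathbb{Z}^d,\mathbb{Z})$ with $d = g\cdot c$, and the collar condition $v = w$ on $Q_{j+N}\setminus Q_j$ together with $\mathrm{supp}(f\cdot p) \subseteq Q_j$ upgrades this to the equality $g \cdot c = f \cdot p$ on all of $Q_{j+N}$.

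The heart of the argument is to show that $g \mid p$ in the UFD $\mathbf{R}_d$. I would compare $c$ with $c^{*} = w^g \cdot f\cdot p \in \ell^{1}(\mathbb{Z}^d,\mathbb{R})$, the unique $\ell^{\infty}$-solution to $g\cdot c^{*} = f\cdot p$ on $\mathbb{Z}^d$. Then $g\cdot(c - c^{*}) = 0$ on $Q_{j+N}$, and the continuity estimate underlying~\eqref{eq:xicontinuity2} (which depends only on the $\ell^{1}$-summability of $w^g$ and a sup-norm bound of its argument, not on integrality) gives $|c_\mathbf{n} - c^{*}_\mathbf{n}| < \varepsilon$ on $Q_{j+N-M_g}$ for a preselected $\varepsilon < \delta_g$. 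Suppose to the contrary that $g \nmid p$. By coprimality of $f$ and $g$ we also have $g \nmid fp$, so $f\cdot p \notin \ker\xi_g \cap \ell^{1} = g\cdot \mathbf{R}_d$; hence $\rho(c^{*}) = \xi_g(f\cdot p)$ is a nonzero point of $X_{\tilde{g}}$. Expansiveness of $\alpha_{\tilde{g}}$ (cf.~\eqref{eq:expansiveConstant}) furnishes some $\mathbf{n}_0$ with $\pmb{|} c^{*}_{\mathbf{n}_0}\pmb{|} \ge \delta_g$, and the exponential decay of the homoclinic point $w^g$ against the $Q_j$-supported Laurent polynomial $f\cdot p$ localises any such $\mathbf{n}_0$ inside $Q_{j+N-M_g}$, using the hypothesis $N \ge 3 M_{f,g}$. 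At $\mathbf{n}_0$ the integer $c_{\mathbf{n}_0}$ cannot lie within $\varepsilon < \delta_g$ of $c^{*}_{\mathbf{n}_0}$, a contradiction. So $g \mid p$.

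Writing $p = g\cdot r$ with $r \in \mathbf{R}_d$, we obtain $f\cdot p = h\cdot r$, with $\mathrm{supp}(h\cdot r) \subseteq Q_j$ and $\mathrm{supp}(r) \subseteq Q_{j-M_f-M_g} \subset F := Q_{j+N}$. The latter inclusion yields the on-the-nose identity $(h\cdot r)|_F = \Delta^{h,F}(r|_F)$, whence
\[
v|_F - w|_F \;=\; \tilde{\iota}_{Q_j}(d)|_F \;=\; (h\cdot r)|_F \;=\; \Delta^{h,F}(r|_F) \;\in\; \Delta^{h,F}\, \mathbb{Z}^F.
\]
But the classical sandpile-group identification $\mathcal{R}^{(h)}_F \cong \mathbb{Z}^F / \Delta^{h,F}\,\mathbb{Z}^F$ rules this out for distinct recurrent configurations $v|_F, w|_F \in \mathcal{R}^{(h)}_F$. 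Hence $v|_F = w|_F$, so $\tilde{\iota}_{Q_j}(v-w) = 0$ and $p = 0$, contradicting our assumption.

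The main obstacle is the quantitative localisation in the divisibility step: one must show that some witness $\mathbf{n}_0$ with $\pmb{|} c^{*}_{\mathbf{n}_0}\pmb{|} \ge \delta_g$ always lies inside $Q_{j+N-M_g}$. This requires careful control of the decay rate of $w^g$ against $\delta_g$ and the support radius of $f\cdot p$; the factor $3$ in $N \ge 3 M_{f,g}$ is tuned to accommodate simultaneously (a) the $M_g$-loss from~\eqref{eq:xicontinuity2}, (b) the $M_f$-thickening around $Q_j$ coming from the convolution with $f$, and (c) a buffer forcing $|c^{*}_\mathbf{n}|$ below $\delta_g$ outside a sufficiently small neighbourhood of $Q_j$. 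Everything else --- the UFD structure of $\mathbf{R}_d$, the kernel formula~\eqref{eq:kernel}, and the sandpile-group isomorphism --- is classical.
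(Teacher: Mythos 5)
Your argument is correct, and its overall skeleton matches the paper's (argue by contradiction, lift $v-w=g\cdot c$ with $c\in\ell^\infty(\mathbb{Z}^d,\mathbb{Z})$, show that $g$ divides the cofactor $p$, and conclude that two recurrent configurations would differ by a multiple of $h$), but both key steps are executed by genuinely different means. For the divisibility $g\mid p$ the paper argues purely on supports: \eqref{eq:xicontinuity2} localises $\textup{supp}(c)$ near $Q_j$, so that $f\cdot p=g\cdot\tilde{\iota}_{Q_{j+M}}(c)$ becomes an identity of Laurent polynomials and unique factorisation in $\mathbf{R}_d$ finishes the step. You instead compare the integer lift $c$ with the real solution $c^{*}=\bar{\xi}_g(f\cdot p)$ and invoke the expansiveness constant $\delta_g$ of $X_{\tilde{g}}$. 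This works, and the localisation you single out as the main obstacle is in fact immediate: for $\mathbf{n}\notin Q_{j+M_g}$ the window $\mathbf{n}+Q_{M_g}$ misses $Q_j=\textup{supp}(f\cdot p)$, so \eqref{eq:xicontinuity} applied to $f\cdot p$ versus $0$ gives $|c^{*}_\mathbf{n}|<\varepsilon<\delta_g$ there; no separate decay-rate analysis is required, and $N\ge 2M_g$ already suffices for this step. The price is that you use heavier machinery ($\delta_g$ and the homoclinic point of $g$) where the paper's support argument is elementary. For the endgame the paper extends $\pi_{Q_j}(v)$ and $\pi_{Q_j}(w)$ to infinite-volume recurrent configurations differing by $h\cdot m'$ and invokes Lemma \ref{l:propxi}(2), whereas you stay in finite volume $F=Q_{j+N}$ and use $\mathcal{R}^{(h)}_F\cong\mathbb{Z}^F/\Delta^{h,F}\mathbb{Z}^F$ together with $\pi_F(\mathcal{R}^{(h)})\subseteq\mathcal{R}^{(h)}_F$ and the invertibility of $\Delta^{h,F}$; this is the finite-volume counterpart of Remark \ref{r:systemofreps} and is arguably more self-contained.

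Three minor repairs. First, your claim $\textup{supp}(r)\subseteq Q_{j-M_f-M_g}$ is unjustified (and unnecessary): the Newton-polytope argument, using $h_{\mathbf{0}}\neq 0$, only yields $\textup{supp}(r)\subseteq Q_j$, which already gives $(h\cdot r)|_F=\Delta^{h,F}(r|_F)$. Second, the supremum in \eqref{eq:expansiveConstant} need not be attained, so choose $\mathbf{n}_0$ with $\pmb{|} c^{*}_{\mathbf{n}_0}\pmb{|}>\varepsilon$ rather than $\ge\delta_g$; the contradiction $|c_{\mathbf{n}_0}-c^{*}_{\mathbf{n}_0}|\ge\pmb{|} c^{*}_{\mathbf{n}_0}\pmb{|}>\varepsilon$ goes through unchanged. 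Third, as in the paper's own use of \eqref{eq:xicontinuity}, your continuity estimates implicitly require that the configurations being compared are uniformly bounded (here by $\gamma_h$), which is what legitimises a single choice of $M_g$; this should be said once.
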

\begin{proof}
We prove the statement indirectly. Assume that 
\[\tilde{\iota}_{Q_j}(v)=\tilde{\iota}_{Q_j}(w) + f\cdot m\]
for some $m\in \mathbf{R}_d$ with $\textup{supp}(f\cdot m) \subset Q_j$. Since $v,w \in \mathcal{W}^{(h)}_g\subset  \mathcal{R}^{(h)}$ we can find $v',w' \in \mathcal{V}^{(h)}_g$ such that $v-w=g\cdot(v'-w')$. By assumption $(v-w)_\mathbf{k}=0$ for all $\mathbf{k}\in Q_{j+N}\setminus Q_j$ and \eqref{eq:xicontinuity2} implies that $\textup{supp}(v'-w') \cap Q_{j+N-M} \subseteq Q_{j+M}$, where $M=M_{f,g}$, and therefore
\[f\cdot m=\tilde{\iota}_{Q_j}(g\cdot(v'-w'))=g\cdot\tilde{\iota}_{Q_{j+M}}(v'-w'). \]
Since $f$ and $g$ have no common factor and $\mathbf{R}_d$ has unique factorisation $g$ divides $m$, i.e. $m=g\cdot m'$ for some $m' \in \mathbf{R}_d$. It follows that 
\[ \pi_{Q_j}(v)=\pi_{Q_j}(w) + h\cdot m'. \]
Let $\tilde{v}$  satisfy $\pi_{Q_j}(\tilde{v})=\pi_{Q_j}(v)$ and $\pi_{\mathbb{Z}^d\setminus Q_j}(\tilde{v})=\pi_{\mathbb{Z}^d\setminus Q_j}(v_{\max})$, where $(v_{\max})_\mathbf{j}=\gamma_h-1$ for all $\mathbf{j}\in \mathbb{Z}^d$ (cf. Definition \ref{d:dompol}), and define $\tilde{w}$ similarly. Then $\tilde{v},\tilde{w} \in \mathcal{R}^{(h)}$ (cf. \eqref{eq:infiniteba}) and $\tilde{v}-\tilde{w} \in (h)=h\cdot \mathbf{R}_d$.  A contradiction to Lemma \ref{l:propxi} (2).
\end{proof}

For the next lemma we use the notion of $(F,\varepsilon')$-separated set, where $F\Subset \mathbb{Z}^d$ and $\varepsilon' >0$. A subset $Z \subset X_{\tilde{f}}$ is called {\it $(F,\varepsilon')$-separated} if for any pair of distinct points $x,y\in Z$ we can find a $\mathbf{j}\in F$ such that $\pmb{|} x_\mathbf{j}-y_\mathbf{j} \pmb{|}>\varepsilon'$. For better readability of the next lemma we put $M=\max(M_g,M_f)$. 
 
\begin{lemm}\label{l:separatedimage}
Let $d \ge 1$ and $h=fg \in \mathbf{P_d}$ such that $f$ and $g$ have no common factor. For every $j\ge 1, N\ge 3M$ and $u\in \mathcal{W}^{(h)}_g$ the map $\xi_f$ restricted to $\tilde{\iota}_{Q_j}(C^{(j,N)}(u))$ is injective. Moreover, $\xi_f(\tilde{\iota}_{Q_j}(C^{(j,N)}(u)))\subseteq X_{\tilde{f}}$ is a $(Q_{j+M},\delta_f)$-separated set (cf. \eqref{eq:cylinder}).
\end{lemm}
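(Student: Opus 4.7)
The plan is to run two closely related arguments, both resting on the explicit kernel description \eqref{eq:kernel} of $\xi_f$ on $\ell^1(\mathbb{Z}^d,\mathbb{Z})$, the continuity of $\bar{\xi}_f$ packaged in \eqref{eq:xicontinuity}, and the expansiveness constant \eqref{eq:expansiveConstant} of $\alpha_{\tilde{f}}$. Injectivity is essentially a direct consequence of Lemma \ref{l:nofmultiples}, while the separation claim additionally uses the decay of the homoclinic point $w^f$ to localize ``large'' coordinates of $\xi_f$ inside the window $Q_{j+M}$.

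For injectivity, suppose $v,w\in C^{(j,N)}(u)$ satisfy $\xi_f(\tilde{\iota}_{Q_j}(v))=\xi_f(\tilde{\iota}_{Q_j}(w))$. Since $\xi_f$ and $\tilde{\iota}_{Q_j}$ are group homomorphisms, $\tilde{\iota}_{Q_j}(v-w)$ lies in $\ker\xi_f$ and is supported in $Q_j$, hence belongs to $\ell^1(\mathbb{Z}^d,\mathbb{Z})\cap\ker\xi_f=f\cdot\mathbf{R}_d$ by \eqref{eq:kernel}. The hypothesis $N\ge 3M_{f,g}$ then allows an appeal to Lemma \ref{l:nofmultiples}, which forces $v=w$.

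For the separation claim, fix distinct $v,w\in C^{(j,N)}(u)$ and put $x=\xi_f(\tilde{\iota}_{Q_j}(v-w))\in X_{\tilde{f}}$. The injectivity just established gives $x\neq 0$, so $\sup_{\mathbf{n}}\pmb{|} x_\mathbf{n}\pmb{|}\ge\delta_f$ by \eqref{eq:expansiveConstant}. To localize this supremum inside $Q_{j+M}$, choose $\varepsilon$ with $0<\varepsilon<\delta_f$ and apply \eqref{eq:xicontinuity} to $f$, which produces $M_f\le M$ with the stated continuity at the origin. Since $\tilde{\iota}_{Q_j}(v-w)$ is supported on $Q_j$, any $\mathbf{n}\notin Q_{j+M_f}$ satisfies $(\mathbf{n}+Q_{M_f})\cap Q_j=\emptyset$, so the shifted configuration $\bar{\sigma}^\mathbf{n}\tilde{\iota}_{Q_j}(v-w)$ vanishes on $Q_{M_f}$. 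Using that $\bar{\xi}_f$ is convolution with $w^f\in\ell^1(\mathbb{Z}^d,\mathbb{R})$ and therefore commutes with shifts, \eqref{eq:xicontinuity} applied at the origin gives $|\bar{\xi}_f(\tilde{\iota}_{Q_j}(v-w))_\mathbf{n}|<\varepsilon$ and hence $\pmb{|} x_\mathbf{n}\pmb{|}<\varepsilon<\delta_f$. The supremum must therefore be attained inside $Q_{j+M_f}\subseteq Q_{j+M}$, yielding some $\mathbf{n}_0\in Q_{j+M}$ with $\pmb{|} x_{\mathbf{n}_0}\pmb{|}\ge\delta_f$; a slightly smaller choice of $\varepsilon$ upgrades this to the strict inequality $>\delta_f$ required by the definition of $(Q_{j+M},\delta_f)$-separation.

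The main obstacle is the interaction between the cylinder-set condition defining $C^{(j,N)}(u)$ and the truncation $\tilde{\iota}_{Q_j}$: one has to be sure that Lemma \ref{l:nofmultiples}'s hypotheses are genuinely met when passing from differences $v-w$ to their $\ell^1$-truncations. This is precisely what the margin $N\ge 3M_{f,g}$ and the unique factorisation of $\mathbf{R}_d$ exploited in that lemma are designed to control; once this step is in place the remainder is a routine combination of \eqref{eq:kernel}, shift-equivariance of $\bar{\xi}_f$, and the quantitative decay of $w^f$.
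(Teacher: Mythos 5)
Your proof is correct and follows essentially the same route as the paper's: injectivity via Lemma \ref{l:nofmultiples} combined with the third identity in \eqref{eq:kernel}, and separation by using \eqref{eq:xicontinuity} and shift-equivariance of $\bar{\xi}_f$ to show that the coordinates of the nonzero point $\xi_f(\tilde{\iota}_{Q_j}(v-w))\in X_{\tilde{f}}$ are smaller than $\delta_f$ outside $Q_{j+M}$, so that the expansiveness bound \eqref{eq:expansiveConstant} must be witnessed inside $Q_{j+M}$ (the paper phrases exactly this argument contrapositively). The one flaw is your closing remark: shrinking $\varepsilon$ cannot upgrade $\pmb{|} x_{\mathbf{n}_0}\pmb{|}\ge\delta_f$ to the strict inequality $>\delta_f$, since $\varepsilon$ only controls the coordinates outside $Q_{j+M}$; this boundary case is equally glossed over in the paper's own proof (which assumes $<\delta_f$ rather than $\le\delta_f$ in its contradiction hypothesis) and is harmless for the application in Theorem \ref{t:equalentropy}, where one may replace $\delta_f$ by $\delta_f/2$ in the count $s_{L+M}$ without affecting the entropy limit.
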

\begin{proof}
The first statement follows from Lemma \ref{l:nofmultiples} and the third equation in \eqref{eq:kernel}. We prove the second statement indirectly. Let $v,w \in C^{(j,N)}(u) \subseteq \mathcal{W}^{(h)}_g$ and put $x=\xi_f(\tilde{\iota}(v)),y=\xi_f(\tilde{\iota}(w)) \in X_{\tilde{f}}$. Suppose that
\begin{equation}\label{eq:E6} 
\pmb{|} x_\mathbf{n}-y_\mathbf{n} \pmb{|} <\delta_f, 
\end{equation}
for every $\mathbf{n} \in Q_{j+M}$, where $\delta_f$ is defined in \eqref{eq:expansiveConstant}. From (\ref{eq:xicontinuity}) we obtain that (\ref{eq:E6}) also holds for all $\mathbf{n}\in \mathbb{Z}^d\setminus Q_{j+M}$, since $\pi_{\mathbb{Z}^d\setminus Q_{j}}(\tilde{\iota}(v))=\pi_{\mathbb{Z}^d \setminus Q_{j}}(\tilde{\iota}(w))$, and therefore for all $\mathbf{n} \in \mathbb{Z}^d$. By expansiveness of $f$ Equation (\ref{eq:expansiveConstant}) implies that $\xi_f(\tilde{\iota}(v))=x=y=\xi(\tilde{\iota}(w))$. 
\end{proof}
\begin{proof}[Proof of Theorem \ref{t:equalentropy}]
We first show that $\xi_h(\mathcal{W}^{(h)}_g)=X_{\tilde{f}}$. Recall that $\mathcal{V}^{(h)}_g=\bar{\xi}_g(\mathcal{W}^{(h)}_g)\subseteq \ell^\infty(\mathbb{Z}^d,\mathbb{Z})$, so that  $\xi_h(\mathcal{W}^{(h)}_g)=\rho(\bar{\xi}_f(\bar{\xi}_g(\mathcal{W}^{(h)}_g)))=\xi_f(\mathcal{V}^{(h)}_g)\subseteq X_{\tilde{f}}=\ker \tilde{f}(\alpha)$. Conversely, if $x\in X_{\tilde{f}}$ choose $w \in \rho^{-1} ( x )$ (cf. \eqref{eq:rho}) with $0 \le w_\mathbf{j} <1$ for every $\mathbf{j} \in \mathbb{Z}^d$. Then $u={f} \cdot w \in \ell^\infty(\mathbb{Z}^d,\mathbb{Z})$ and $ \xi_h({g}\cdot u)=\rho(\bar{\xi}_f(u))=x$, where we have used Equation \eqref{eq:hinverse}. By Lemma \ref{l:propxi} there exists an $n \in \ell^\infty(\mathbb{Z}^d,\mathbb{Z})$ such that $v={g}\cdot u+{h}\cdot n={g}\cdot(u +{f} \cdot n) \in \mathcal{R}^{(h)}$. Therefore, $v \in \mathcal{W}^{(h)}_g$ and $\xi_h(v)=\xi_f(u)=x$. 

We have shown that $\xi_h(\mathcal{W}^{(h)}_g)=X_{\tilde{f}}$. Together with the equivariance of $\xi_h$ (cf. \eqref{eq:kernel}) we deduce that $X_{\tilde{f}}$ is a symbolic cover of $\mathcal{W}^{(h)}_g$, hence $\textup{h}_{\textup{top}}(\sigma_{\mathcal{W}^{(h)}_g}) \ge h_{\textup{top}}(\alpha_f)$. It remains to show the reverse inequality. 

For every $K\ge 1, \varepsilon > 0$ denote by $s_{K}(\varepsilon)$ the maximal cardinality of any $(Q_{K},\varepsilon)$-separated set in $X_{\tilde{f}}$ (cf. \eqref{eq:QM}). By definition of topological entropy we get
\begin{equation}
	\label{eq:Whgsize}
	\begin{aligned} 
\textup{h}_{\textup{top}}(\sigma _{\mathcal{W}^{(h)}_g})&=\lim_{L\to\infty }\frac{1}{|Q_L|} \log\, \bigl| \pi_{Q_L}(\mathcal{W}^{(h)}_g) \bigr|
\\
&=\lim_{L\to\infty }\frac{1}{|Q_L|} \sup_{u \in \pi_{Q_{L+N}\setminus Q_L}(\mathcal{W}^{(h)}_g)}\log\,\bigl|\pi _{Q_L}({\pi}^{-1}_{Q_{L+N} \setminus Q_L}(u))\bigl|
\\
&=\lim_{L\to\infty }\frac{1}{|Q_L|} \sup_{u \in \pi_{Q_{L+N}\setminus Q_L}(\mathcal{W}^{(h)}_g)}\log\,\bigl|\tilde{\iota}_{Q_L}(C^{(L,N)}(u))\bigl|
\\
&=\lim_{L\to\infty }\frac{1}{|Q_L|} \sup_{u \in \pi_{Q_{L+N}\setminus Q_L}(\mathcal{W}^{(h)}_g)}\log\,\bigl|\xi_f(\tilde{\iota}_{Q_L}(C^{(L,N)}(u) ))\bigl|
\\
&\le \lim_{L\to\infty }\frac{1}{|Q_{L+M}|} \log s_{L+M}(\delta_f) = \textup{h}_{\textup{top}}(\alpha _f),
	\end{aligned}
	\end{equation}
where we have used the Lemmas \ref{l:nofmultiples} and \ref{l:separatedimage}.
\end{proof}
We have achieved our first goal of this section, but at this point we do not know much about its usefulness. In Example \ref{e:BTWlikeex} we have found two Laurent polynomials $f,g\in \mathbf{P}_1$ such that $h=fg \in \mathbf{P}_1$. However, in that case Theorem \ref{t:equalentropy} does not provide any new insights as an explicit symbolic representation of $X_{\tilde{f}}$ is already known (cf. Theorem \ref{t:symbolicrepresentation}). The following examples illustrates the usefulness of Theorem \ref{t:equalentropy} in dimension $d=1$. At least one factor of $h$ does not lie in $\mathbf{P}_1$ and the Laurent polynomials $f$ and $g$ have no common factor. Note that the order of the factors is interchangeable.  
\begin{exas}\ 
\begin{enumerate}
\item
For $f_1=-u^{-2}-2u^{-1}+3 +u \in \mathbf{R}_1$ and lopsided $g_1=2 -u \in \mathbf{P}_1$ (cf. Definition \ref{d:dompol}) we get
\begin{equation*}
h_1=f_1g_1= -2u^{-2}-3u^{-1}+8-u-u^2 \in \mathbf{P}_1.
 \end{equation*}
%Observe that $f$ is not lopsided, but $g$ is.
\item
The Laurent polynomials $f_2=-u^{-1}+3 +u$, $g_2=u^{-1}+3 -u \in \mathbf{R}_1$ are both lopsided and
\begin{equation*}
h_2=f_2g_3= -u^{-2}+11- u^2 \in \mathbf{P}_1.
\end{equation*}
\item
Neither $f_3=-u^{-2}-2u^{-1}+2-u+u^2$ nor $g_3=1-u-u^2\in \mathbf{R}_1$ is lopsided and
\begin{equation*}
h_3=f_3g_3= -u^{-2}-u^{-1}+5-u- u^4 \in \mathbf{P}_1.
\end{equation*}
\end{enumerate}
\end{exas}
In dimension $d>1$ the situation seems to be more obscure. The following lemma shows that each lopsided Laurent $f$ polynomial with dominant coefficient $\gamma_f=f_{\mathbf{0}}$ and purely positive coefficients has a cofactor $g$ in $\mathbf{P}_d$ such that the product $h=fg$ is a sandpile polynomial. We remark that we are not aware of any other non-trivial classes of polynomials in $\mathbf{R}_d$ with $d>1$ which fulfil this property.

\begin{lemm}\label{l:positivepol}
Fix $d \ge 1$. Let $g$ be in $\mathbf{P}_d$ and define the Laurent polynomial $f$ by $f_\mathbf{n}=|g_\mathbf{n}|$ for all $\mathbf{n} \in \mathbb{Z}^d$. Then $h=f g \in \mathbf{P}_d$. Moreover, $f$ and $g$ have no common factor.
\end{lemm}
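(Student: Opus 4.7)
The plan is to exploit the single identity
\[
f + g = 2 g_\mathbf{0},
\]
which follows at once from $f_\mathbf{n} = |g_\mathbf{n}|$ together with $g_\mathbf{n} \le 0$ for $\mathbf{n} \ne \mathbf{0}$: at $\mathbf{n} = \mathbf{0}$ the coefficient sum is $2 g_\mathbf{0}$, and for $\mathbf{n} \ne \mathbf{0}$ it equals $|g_\mathbf{n}| + g_\mathbf{n} = 0$. This one identity drives both halves of the lemma.

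For the first half I would first check that $h_\mathbf{n} \le 0$ for $\mathbf{n} \ne \mathbf{0}$. In the convolution $h_\mathbf{n} = \sum_\mathbf{k} f_\mathbf{k}\, g_{\mathbf{n}-\mathbf{k}}$ the two special terms $\mathbf{k} = \mathbf{0}$ and $\mathbf{k} = \mathbf{n}$ contribute $g_\mathbf{0}\, g_\mathbf{n} + |g_\mathbf{n}|\, g_\mathbf{0} = g_\mathbf{0}(g_\mathbf{n} + |g_\mathbf{n}|) = 0$, and every remaining term equals $|g_\mathbf{k}|\, g_{\mathbf{n}-\mathbf{k}}$ with both $\mathbf{k}$ and $\mathbf{n}-\mathbf{k}$ nonzero, hence $\le 0$. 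Lopsidedness then falls out by evaluating at $\mathbf{1} = (1,\dots,1)$: since $g$ is lopsided, $g(\mathbf{1}) = g_\mathbf{0} - \sum_{\mathbf{n} \ne \mathbf{0}} |g_\mathbf{n}| > 0$; clearly $f(\mathbf{1}) = \sum_\mathbf{n} |g_\mathbf{n}| > 0$; hence $h(\mathbf{1}) = f(\mathbf{1})\, g(\mathbf{1}) > 0$. On the other hand, by the sign information just established, $h(\mathbf{1}) = h_\mathbf{0} - \sum_{\mathbf{n} \ne \mathbf{0}} |h_\mathbf{n}|$. This forces $h_\mathbf{0} > \sum_{\mathbf{n} \ne \mathbf{0}} |h_\mathbf{n}| \ge 0$, so $h_\mathbf{0} > 0$, $\gamma_h = h_\mathbf{0}$, and $2 h_\mathbf{0} > \sum_\mathbf{n} |h_\mathbf{n}|$, giving $h \in \mathbf{P}_d$.

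For the coprimality clause, any common factor $p \in \mathbf{R}_d$ of $f$ and $g$ must divide $f + g = 2 g_\mathbf{0}$, a nonzero constant in the UFD $\mathbf{R}_d = \mathbb{Z}[u_1^{\pm 1},\dots,u_d^{\pm 1}]$. Its only divisors are products of a unit $\pm u^\mathbf{m}$ with an integer divisor of $2 g_\mathbf{0}$, so $p$ carries no nontrivial polynomial content and is coprime in the sense required for the UFD step in the proof of Lemma \ref{l:nofmultiples} (and hence for Theorem \ref{t:equalentropy}). I do not foresee any substantial obstacle here: the whole lemma essentially collapses once one notices $f = 2 g_\mathbf{0} - g$; the only mild subtlety is reading ``no common factor'' in the sense compatible with a possible integer content in $g$.
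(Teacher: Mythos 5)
Your proof is correct and follows essentially the same route as the paper: the same convolution computation showing $h_\mathbf{n}\le 0$ for $\mathbf{n}\neq\mathbf{0}$, the same evaluation $h(\bar{1})=f(\bar{1})g(\bar{1})>0$, and the same key identity $f+g=2g_\mathbf{0}$ for the coprimality claim (which the paper phrases as ``$f$ and $g$ have no common root''). If anything, your divisibility phrasing of the last step is slightly more careful, since it makes explicit that the only possible common factors are units times integer divisors of $2g_\mathbf{0}$ --- the content subtlety you flag is real but is glossed over by the paper as well.
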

\begin{proof}
Recall that $g\in \mathbf{P}_d$ if and only if $\sum_{\mathbf{j} \in \mathbb{Z}^d} g_\mathbf{j} >0$ and $g_\mathbf{i}\le 0$ for all $ \mathbf{0} \neq \mathbf{i} \in \mathbb{Z}^d$. In particular $g_\mathbf{0}>0$, hence $f_\mathbf{0}=g_\mathbf{0}$ and $f_\mathbf{i}=-g_\mathbf{i}$ for all $\mathbf{i}\neq \mathbf{0}$. For $\mathbf{j}\neq \mathbf{0}$ we have 
\begin{equation}\label{eq:Pd1}
h_\mathbf{j}=\sum_{\mathbf{k}\in\mathbb{Z}^d}f_\mathbf{k}g_\mathbf{j-k}=f_\mathbf{0}g_\mathbf{j}+f_\mathbf{j}g_\mathbf{0}+\sum_{\mathbf{k} \in \mathbb{Z}^d \setminus \{\mathbf{0},\mathbf{j}\}} f_\mathbf{k}g_\mathbf{j-k} \le 0, 
\end{equation}
and since $g \in \mathbf{P}_d$
\begin{equation}\label{eq:Pd2}
\sum_{\mathbf{j} \in \mathbb{Z}^d} h_\mathbf{j}=h(\bar{1})=f(\bar{1})g(\bar{1})>0,
\end{equation}
where $\bar{1}=(1,\dots,1)\in \mathbb{Z}^d$. From \eqref{eq:Pd1} and \eqref{eq:Pd2} we conclude that $h\in \mathbf{P}_d$. 

For the last assertion it suffices to prove that $f$ and $g$ have no common root. Suppose there exists an $\mathbf{r} \in \mathbb{Z}^d$ such that $f(\mathbf{r})=g(\mathbf{r})=0$. Then $2g_{\mathbf{0}}=f(\mathbf{r})+g(\mathbf{r})=0$ and therefore $g_{\mathbf{0}} = 0$, a contradiction to $g \in \mathbf{P}_d$.
\end{proof}
\begin{defi}\label{d:positivepol}
We denote the class of Laurent polynomials defined in Lemma \ref{l:positivepol} by $\mathbf{P}_d^+$.
\end{defi} 

\subsection{Symbolic representations}
The remainder of this section is devoted to our final goal to specify conditions on $f$ and $g\in \mathbf{R}_d$ with $h=fg \in \mathbf{P}_d$ such that the equal entropy cover $\mathcal{W}^{(h)}_g=\mathcal{R}^{(h)} \cap \ker \xi_g$ is in fact a symbolic representation. In other words, we want to find a unique measure of maximal entropy on $\mathcal{W}^{(h)}_g$ for which the covering map $\xi_h:\mathcal{W}^{(h)}_g \to X_{\tilde{f}}$ is almost one-to-one. For this purpose the abelian structure in finite volume, which we discussed at the beginning of this section, will become very useful. Before we get into more details we repeat some basics from the previous sections for convenience.

Let $g \in \mathbf{R}_d$, $F \Subset \mathbb{Z}^d$ and define $\Delta^g=\Delta^{g,F}$ as in \eqref{eq:topplingmatrix} by
\begin{displaymath}
\Delta^g_{\mathbf{i}\mathbf{j}}=g_\mathbf{i-j}
\end{displaymath} 
for every ${\mathbf{i},\mathbf{j}} \in F$. 
Moreover, if $g \in \mathbf{P}_d$ then the matrix $\Delta^g$ fulfils the properties

\begin{equation*}
\begin{aligned} 
  \textup{(P1)}\ \ & \Delta^g_{\mathbf{i}\mathbf{j}} \le 0\ \textup{for all}\ \mathbf{i}\neq \mathbf{j} \in F,
  \\
  \textup{(P2)}\ \ & \sum_{\mathbf{j} \in F} \Delta^g_{\mathbf{i}\mathbf{j}} > 0\ \textup{for all}\ \mathbf{i} \in F.
  \end{aligned}
\end{equation*}
\begin{rema}\label{r:asmcondition}
Any matrix $\Delta$ satisfying the properties (P1) and (P2) is called toppling matrix and determines an ASM on $F$ with recurrence class $\mathcal{R}_F$. (cf. \cite{Speer}). 
\end{rema}
As seen above $\mathcal{W}^{(h)}_g=\mathcal{R}^{(h)} \cap \ker \xi_g$ is an equal entropy cover of $X_{\tilde{f}}$ via $\xi_h$. Unfortunately, the restriction $\pi_F(\mathcal{W}^{(h)}_g)$ of $\mathcal{W}^{(h)}_g$ to $F\Subset \mathbb{Z}^d$ does not provide an obvious group structure like the one known on $\pi_F(\mathcal{R}^{(h)})=\mathcal{R}_F^{(h)}$. In that case the abelian group $\mathcal{R}^{(h)}_F$ carries the Haar measure $\mu_F$. This property is essential for finding the unique measure $\mu=\lim_{F\uparrow \mathbb{Z}^d} \mu_F$ of maximal entropy on $\mathcal{R}^{(h)}$ for which the map $\xi_h:\mathcal{R}^{(h)}\to X_{\tilde{h}}$ is almost one-to-one (cf. \cite{4Redig2},\cite{4Schmidt1}). To circumvent this problem we introduce a new toppling  matrix $\Delta'=\Delta^{g} \Delta^{f}$ of dimension $|F|\times |F|$, which replaces the role of  $\Delta^{h}$ in the standard case with $h\in \mathbf{P}_d$. The advantage of this approach is that the multiples of $\Delta^{g}$ form a subgroup $\mathcal{W}_F$ of the abelian group $\mathcal{R}'_F$ determined by $\Delta'$, as we shall see further on. This group property brings us back to well-known territory and we can apply exactly the same tools as in \cite{4Redig2} and \cite{4Schmidt1} we already implicitly used to describe Theorem \ref{t:symbolicrepresentation}. There are two requirements we need to check in order to use these tools. First we need to ensure that $\Delta'$ really determines an ASM in finite volume so that we can consider $\mathcal{W}_F$ as abelian subgroup and second that any accumulation point of the sequence of the Haar measures $\{\nu_F\}_{F\uparrow \mathbb{Z}^d}$ on $\mathcal{W}_F$ concentrates on $\mathcal{W}^{(h)}_g$ and that it is a measure of maximal entropy. 

We start by investigating the first requirement in finite volume with expansive $f$ and $g$. The expansiveness condition is needed to ensure that the ASM determined by $\Delta'$ is dissipative when we perform the transition to infinite volume (cf. Definition \ref{d:dissipative} and the discussion thereafter). We easily see that the matrix $\Delta'=\Delta^g\Delta^f$ is a toppling matrix for $f$ and $g$ in Example \ref{e:BTWlikeex} (1) and (2) but not in (3). This means that we cannot deduce from $h=fg$ being a sandpile polynomial that $\Delta'$ determines an ASM in finite volume. For the examples in higher dimension $d\ge 1$ we recall that each $g \in \mathbf{P}_d$ defines a polynomial $f\in \mathbf{P}_d^+$ such that $h=fg \in \mathbf{P}_d$ (cf. Lemma \ref{l:positivepol} and Definition \ref{d:positivepol}). In this case we call the polynomial $f$ {\it associated to $g$}. 
\begin{lemm}
Let $d\ge 1$ and fix $F \Subset \mathbb{Z}^d$. If $g \in \mathbf{P}_d$ with associated Laurent polynomial $f \in \mathbf{P}_d^+$, then the ($|F|\times |F|$)-matrix $\Delta'=\Delta^{g} \Delta^{f}$ determines an ASM on $F$.
\end{lemm}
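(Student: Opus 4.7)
The plan is to verify directly the two defining properties (P1) and (P2) of a toppling matrix displayed just above the statement; by Remark \ref{r:asmcondition} this suffices. Starting from the finite convolution
\[
\Delta'_{\mathbf{i}\mathbf{j}}=\sum_{\mathbf{k}\in F} g_{\mathbf{i}-\mathbf{k}}\, f_{\mathbf{k}-\mathbf{j}},\qquad \mathbf{i},\mathbf{j}\in F,
\]
the argument will consistently use the defining relation of $f\in\mathbf{P}_d^+$, namely $f_{\mathbf{0}}=g_{\mathbf{0}}$ and $f_{\mathbf{n}}=-g_{\mathbf{n}}$ for all $\mathbf{n}\neq\mathbf{0}$ (recall that $g\in\mathbf{P}_d$ forces $g_{\mathbf{0}}>0$ and $g_{\mathbf{n}}\le 0$ otherwise).

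For property (P1), fix $\mathbf{i}\neq\mathbf{j}$ in $F$ and isolate the two summands with $\mathbf{k}=\mathbf{i}$ and $\mathbf{k}=\mathbf{j}$. Their contribution is
\[
g_{\mathbf{0}}\, f_{\mathbf{i}-\mathbf{j}}+g_{\mathbf{i}-\mathbf{j}}\, f_{\mathbf{0}} = -g_{\mathbf{0}}\, g_{\mathbf{i}-\mathbf{j}}+g_{\mathbf{i}-\mathbf{j}}\, g_{\mathbf{0}} = 0,
\]
while every remaining term has $g_{\mathbf{i}-\mathbf{k}}\le 0$ (since $\mathbf{k}\neq\mathbf{i}$) and $f_{\mathbf{k}-\mathbf{j}}\ge 0$, hence $\Delta'_{\mathbf{i}\mathbf{j}}\le 0$.

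For property (P2), the idea is to compare $\Delta'$ with the full $\mathbb{Z}^d$-convolution defining $h$: since $h=fg$,
\[
\Delta'_{\mathbf{i}\mathbf{j}}=h_{\mathbf{i}-\mathbf{j}}-\sum_{\mathbf{k}\notin F} g_{\mathbf{i}-\mathbf{k}}\, f_{\mathbf{k}-\mathbf{j}}.
\]
Each summand of the correction has $\mathbf{k}\neq\mathbf{i}$, so $g_{\mathbf{i}-\mathbf{k}}\le 0$ and $f_{\mathbf{k}-\mathbf{j}}\ge 0$; the correction is therefore non-positive, and subtracting it can only enlarge $\Delta'_{\mathbf{i}\mathbf{j}}$ above $h_{\mathbf{i}-\mathbf{j}}$. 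Summing over $\mathbf{j}\in F$ and using that $\mathbf{0}\in\mathbf{i}-F$ together with $h_{\mathbf{l}}\le 0$ for $\mathbf{l}\neq\mathbf{0}$ (because $h\in\mathbf{P}_d$ by Lemma \ref{l:positivepol}) yields
\[
\sum_{\mathbf{j}\in F}\Delta'_{\mathbf{i}\mathbf{j}}\;\ge\;\sum_{\mathbf{l}\in\mathbf{i}-F}h_{\mathbf{l}}\;\ge\;\sum_{\mathbf{l}\in\mathbb{Z}^d}h_{\mathbf{l}}=h(\bar 1)>0,
\]
so (P2) holds as well.

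I do not anticipate a real obstacle. The whole argument pivots on the exact cancellation of the two boundary terms in (P1), which in turn relies essentially on the prescription $f_{\mathbf{n}}=|g_{\mathbf{n}}|$; for a generic factorisation $h=fg$ of a sandpile polynomial this cancellation would fail and $\Delta'$ need not satisfy (P1). That is exactly why the lemma is formulated only for associated pairs $(f,g)$ with $f\in\mathbf{P}_d^+$.
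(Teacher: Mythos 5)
Your proof is correct and follows essentially the same route as the paper's: isolate the two summands $\mathbf{k}=\mathbf{i}$ and $\mathbf{k}=\mathbf{j}$ (which cancel because $f_{\mathbf{0}}=g_{\mathbf{0}}$ and $f_{\mathbf{n}}=-g_{\mathbf{n}}$ for $\mathbf{n}\neq\mathbf{0}$) to get (P1), and compare the finite convolution with the full one defining $h=fg\in\mathbf{P}_d$ to get (P2). The only cosmetic difference is that the paper re-indexes the sum over $F-\mathbf{j}$ before splitting off the boundary terms.
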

\begin{proof}
According to Remark \ref{r:asmcondition} we only need to check the properties (P1) and (P2) for $\Delta'$. By assumption $f_\mathbf{i} \ge 0$ for all $\mathbf{i}\in \mathbb{Z}^d$, $f_\mathbf{0}=g_\mathbf{0}$ and $f_\mathbf{i}=-g_\mathbf{i}$ for all $\mathbf{i}\neq \mathbf{0}$. If $\mathbf{i,j} \in F, \mathbf{i}\neq \mathbf{j}$ we obtain
\begin{eqnarray} \label{eq:Delta'equality}
\Delta'_{\mathbf{ij}}&=&\sum_{\mathbf{k}\in F}\Delta^{g}_{\mathbf{ik}}\Delta^{f}_{\mathbf{kj}}=\sum_{\mathbf{k}\in F}g_{\mathbf{i-k}} f_{\mathbf{k-j}}=\sum_{\mathbf{k}\in F-\mathbf{j}}f_{\mathbf{k}} g_{\mathbf{(i-j)-k}} \\ \label{eq:P1}
&=& f_\mathbf{0}g_\mathbf{i-j}+f_\mathbf{i-j}g_\mathbf{0}+\sum_{\mathbf{k} \in (F-\mathbf{j})\setminus \{\mathbf{0},\mathbf{i-j}\}} f_\mathbf{k}g_\mathbf{(i-j)-k} \le 0 
\end{eqnarray} 
Furthermore, since $f_{\mathbf{k}} g_{\mathbf{(i-j)-k}} \le 0$ for $\mathbf{k} \in \mathbb{Z}^d \setminus F -\mathbf{j}$ we obtain from (\ref{eq:Delta'equality}) for all $\mathbf{i,j} \in F$ 
\begin{eqnarray*}%\label{eq:Delta'inequality}
\Delta'_{\mathbf{ij}}&=&\sum_{\mathbf{k}\in \mathbb{Z}^d}f_{\mathbf{k}}\cdot g_{\mathbf{(i-j)-k}}-\sum_{\mathbf{k}\in \mathbb{Z}^d\setminus F-\mathbf{j}}f_{\mathbf{k}}\cdot g_{\mathbf{(i-j)-k}}
\\
&=&h_{\mathbf{i-j}}-\sum_{\mathbf{k}\in \mathbb{Z}^d\setminus F-\mathbf{j}}f_{\mathbf{k}}\cdot g_{\mathbf{(j-i)-k}} \ge h_\mathbf{i-j}=\Delta^{h}_{\mathbf{ij}},
\end{eqnarray*}
where equality holds if supp($f$)$\subseteq F -\mathbf{j}$
. Furthermore,
\begin{equation}\label{eq:P2}
\sum_{\mathbf{j}\in F} \Delta'_{\mathbf{ij}}\ge \sum_{\mathbf{j}\in F} h_\mathbf{i-j} > 0,
\end{equation}
as $h\in \mathbf{P}_d$. By \eqref{eq:P1} and \eqref{eq:P2} the matrix $\Delta'$ is a toppling matrix.
\end{proof}
For the remainder of the section fix $d \ge 1$ and suppose that for $f,g \in \mathbf{R}_d$ the ($|F|\times |F|$)-matrix $\Delta'=\Delta^g\Delta^f$ defines a toppling matrix for any $F\Subset \mathbb{Z}^d$. Since the product $h=fg$ then also lies in $\mathbf{P}_d$ this is the proper setting for our purposes. Under the above assumption on $f$ and $g$ we now investigate the relation between the recurrence classes $\mathcal{R}'_F$ and $\mathcal{R}_F^{(h)}$ determined by $\Delta'$ and $\Delta^{h}$, respectively.

\begin{prop}\label{p:Delta'} Let $f$ and $g \in \mathbf{R}_d$ have no common factor such that for each $F \Subset \mathbb{Z}^d$ the ($|F|\times |F|$)-matrix $\Delta'=\Delta^{g} \Delta^{f}$ determines an ASM on $F$ with recurrence class $\mathcal{R}'_F$.
Put $F^\circ=\{ \mathbf{i} \in F: \mathbf{i} +\textup{supp}(f) \subset F\}$, then 
\[
\pi_{F^\circ}(\mathcal{R'}_F)=\mathcal{R}^{(h)}_{F^\circ},
\] 
where $\pi_{F^\circ}$ is the projection onto the coordinates in $F^\circ$. 
\end{prop}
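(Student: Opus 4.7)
My plan is to verify both inclusions via the burning-algorithm characterisation \eqref{eq:finiteba}. The decisive starting point is a \emph{column identity}: whenever $\mathbf{i}\in F^\circ$, the defining condition $\mathbf{i}+\textup{supp}(f)\subseteq F$ removes every boundary truncation in the convolution $\Delta^g\Delta^f$, so after substituting $\mathbf{l}=\mathbf{k}-\mathbf{i}$ one obtains
\[
\Delta'_{\mathbf{j}\mathbf{i}}=\sum_{\mathbf{k}\in F}g_{\mathbf{j}-\mathbf{k}}f_{\mathbf{k}-\mathbf{i}}=\sum_{\mathbf{l}\in\mathbb{Z}^d}g_{\mathbf{j}-\mathbf{i}-\mathbf{l}}f_{\mathbf{l}}=h_{\mathbf{j}-\mathbf{i}}
\]
for every $\mathbf{j}\in F$. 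In particular $\Delta'_{\mathbf{i}\mathbf{i}}=\gamma_h$ on $F^\circ$, so the stability thresholds on $F^\circ$ agree with those of $\Delta^{h,F^\circ}$, and the burning thresholds at sites of $F^\circ$ computed with respect to $E\subseteq F$ versus $E\cap F^\circ$ differ only by contributions coming from $E\cap(F\setminus F^\circ)$.

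For the inclusion $\pi_{F^\circ}(\mathcal{R}'_F)\subseteq\mathcal{R}^{(h)}_{F^\circ}$ I would take $v\in\mathcal{R}'_F$ and a nonempty $E\subseteq F^\circ$. Viewing $E\subseteq F$ and invoking the burning algorithm for $\Delta'$, one obtains $\mathbf{i}\in E\subseteq F^\circ$ with $v_\mathbf{i}\ge -\sum_{\mathbf{j}\in E,\mathbf{j}\ne\mathbf{i}}\Delta'_{\mathbf{j}\mathbf{i}}$; the column identity rewrites the right-hand side as $-\sum_{\mathbf{j}\in E,\mathbf{j}\ne\mathbf{i}}h_{\mathbf{j}-\mathbf{i}}$, matching the burning threshold for $\Delta^{h,F^\circ}$ on $E$. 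Together with $v_\mathbf{i}<\gamma_h$ this shows $\pi_{F^\circ}(v)\in\mathcal{R}^{(h)}_{F^\circ}$, so this direction is essentially immediate once the column identity is in hand.

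For the converse $\mathcal{R}^{(h)}_{F^\circ}\subseteq\pi_{F^\circ}(\mathcal{R}'_F)$, the natural candidate is to extend $w\in\mathcal{R}^{(h)}_{F^\circ}$ by the maximal stable value on the boundary: set $v_\mathbf{i}=w_\mathbf{i}$ for $\mathbf{i}\in F^\circ$ and $v_\mathbf{i}=\Delta'_{\mathbf{i}\mathbf{i}}-1$ for $\mathbf{i}\in F\setminus F^\circ$, and check $v\in\mathcal{R}'_F$. Nonempty $E\subseteq F^\circ$ are handled by running the previous paragraph in reverse. The main obstacle is subsets $E$ that straddle $F^\circ$ and $F\setminus F^\circ$: at a site $\mathbf{i}\in E\cap F^\circ$ the burning threshold $-\sum_{\mathbf{j}\in E,\mathbf{j}\ne\mathbf{i}}h_{\mathbf{j}-\mathbf{i}}$ picks up nonnegative contributions from the boundary sites in $E$ and thus in general \emph{exceeds} the threshold obtained from $E\cap F^\circ$ alone, so one cannot just recycle the recurrence of $w$. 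My proposed way around this is to use that the maximal configuration $v_\textup{max}$ always lies in $\mathcal{R}'_F$: the burning algorithm applied to $E$ under $v_\textup{max}$ yields a witness $\mathbf{i}^*\in E$, and on $F\setminus F^\circ$ the candidate $v$ agrees with $v_\textup{max}$, so it suffices to choose $\mathbf{i}^*$ inside $E\cap(F\setminus F^\circ)$. The hard step is exactly this selection, namely showing that for every such mixed $E$ some boundary site $\mathbf{i}\in E\cap(F\setminus F^\circ)$ satisfies $\sum_{\mathbf{j}\in E}\Delta'_{\mathbf{j}\mathbf{i}}\ge 1$; I would attempt it by analysing the strict positive correction introduced by the $f$-truncation in the columns indexed by $F\setminus F^\circ$, arguing that the ``outflow surplus'' produced by $\Delta^g\Delta^f$ at the boundary cannot be absorbed entirely by sites in $F^\circ$ and must therefore be realised at some boundary site of $E$.
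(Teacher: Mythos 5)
Your route is the same as the paper's: the column identity $\Delta'_{\mathbf{j}\mathbf{i}}=h_{\mathbf{j}-\mathbf{i}}$ for $\mathbf{i}\in F^\circ$, $\mathbf{j}\in F$ is exactly the equality case recorded in the lemma preceding the proposition, your first inclusion is the paper's argument verbatim, and the paper uses precisely your candidate extension ($v$ on $F^\circ$, the maximal stable value $\Delta'_{\mathbf{i}\mathbf{i}}-1$ on $F\setminus F^\circ$) for the converse. The one place you diverge is the end, and there you have manufactured a difficulty that is not present. The ``hard step'' you isolate --- finding some $\mathbf{i}\in E\cap(F\setminus F^\circ)$ with $\sum_{\mathbf{j}\in E}\Delta'_{\mathbf{j}\mathbf{i}}\ge 1$ --- holds for \emph{every} $\mathbf{i}\in E$ and requires no analysis of truncation surpluses: the off-diagonal entries of $\Delta'$ are nonpositive and, by the entrywise bound $\Delta'_{\mathbf{j}\mathbf{i}}\ge h_{\mathbf{j}-\mathbf{i}}$ from the lemma preceding the proposition,
\[
\sum_{\mathbf{j}\in E}\Delta'_{\mathbf{j}\mathbf{i}}\;\ge\;\sum_{\mathbf{j}\in F}\Delta'_{\mathbf{j}\mathbf{i}}\;\ge\;\sum_{\mathbf{k}\in F-\mathbf{i}}h_{\mathbf{k}}\;\ge\;\sum_{\mathbf{k}\in\mathbb{Z}^d}h_{\mathbf{k}}\;>\;0,
\]
so this integer sum is at least $1$, i.e.\ $\Delta'_{\mathbf{i}\mathbf{i}}-1\ge-\sum_{\mathbf{j}\in E,\,\mathbf{j}\ne\mathbf{i}}\Delta'_{\mathbf{j}\mathbf{i}}$. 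In words, any site carrying the maximal stable height is a valid burning witness in every subset containing it --- the same one-line computation that makes $v_{\max}$ recurrent, a fact you already invoke without noticing that it closes your gap. With this observation the mixed case is finished by picking an arbitrary $\mathbf{i}\in E\cap(F\setminus F^\circ)$, and your proof coincides with the paper's. As written, however, the proposal leaves this step as an unexecuted plan, and the proposed ``outflow surplus'' argument (that the boundary correction cannot be absorbed by $F^\circ$) is both unnecessary and unsupplied; replace it by the computation above.
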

\begin{proof}
Recall from the burning algorithm \eqref{eq:finiteba} that $v \in \mathcal{R}'_F$ if and only if for all $G \subseteq F$ exists an $\mathbf{i}\in G$ such that
\begin{displaymath}
\Delta'_{\mathbf{ii}}> v_\mathbf{i}\ge -\sum_{\mathbf{j}\in G,\mathbf{j\neq i}}\Delta'_{\mathbf{ij}}.
\end{displaymath}
In particular we also have that for every $G^\circ\subseteq F^\circ$ there exists an $\mathbf{i}\in G^\circ$ such that 
\begin{displaymath}
\Delta^{h}_\mathbf{ii}=\Delta'_{\mathbf{ii}}> v_\mathbf{i}\ge -\sum_{\mathbf{j}\in G^\circ, \mathbf{j\neq i}}\Delta'_{\mathbf{ij}}=-\sum_{\mathbf{j}\in G^\circ, \mathbf{j\neq i}}\Delta^{h}_{\mathbf{ij}},
\end{displaymath}
which proves that $\pi_{F^\circ}(v)\in \mathcal{R}^{(h)}_{F^\circ}$. Hence $\pi_{F^\circ}(\mathcal{R}'_F)\subseteq \mathcal{R}^{(h)}_{F^\circ}$. Conversely, for $v \in \mathcal{R}^{(h)}_{F^\circ}$ define 
\begin{equation*}
w_{\mathbf{i}}=
\begin{cases}
v_{\mathbf{i}}& \textup{if}\enspace \mathbf{i}\in F^\circ \\ \Delta'_{\mathbf{ii}}-1& \textup{if} \enspace \mathbf{i}\in F\setminus F^\circ.
\end{cases}
\end{equation*}
Then $w \in \mathcal{R}'_F$ and $\pi_{F^\circ}(w)=v$. It follows that $\mathcal{R}_{F^\circ}^{(h)} \subseteq \pi_{F^\circ}(\mathcal{R}'_F)$.
\end{proof}
For any $F\Subset \mathbb{Z}^d, h \in \mathbf{P}_d$ and $v\in \mathcal{R}^{(h)}_F$ we have that $v_{\mathbf{i}} < \Delta^{h}_\mathbf{ii}=\gamma_h$ for all $\mathbf{i}\in F$ (cf. Definition \ref{d:dompol}). As one would expect a similar statement is true for any $v \in \mathcal{R}'_{F}$.
\begin{coro}\label{c:beta}
Let $f$ and $g$ be as in Proposition \ref{p:Delta'} and set 
\begin{equation}\label{eq:gamma'}
\gamma'= \sum_{\mathbf{k} \in G^+} f_{\mathbf{k}}g_{-\mathbf{k}}\ge \gamma_h,
\end{equation}
where $G^+=\{\mathbf{k}\in \mathbb{Z}^d:f_{\mathbf{k}}g_{-\mathbf{k}} >0 \}$.  Then $\mathcal{R}'_F \subseteq \{0,\dots,\gamma'-1\}^F$ for any $F \Subset \mathbb{Z}^d$. 
\end{coro}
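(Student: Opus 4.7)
The plan is to unwrap the definition of $\Delta'_{\mathbf{i}\mathbf{i}}$ as a convolution and then bound it by the ``positive part'' of that convolution, which is exactly $\gamma'$. Since any $v\in\mathcal{R}'_F$ is a stable configuration for the toppling matrix $\Delta'$, we have $0\le v_{\mathbf{i}}<\Delta'_{\mathbf{i}\mathbf{i}}$ for every $\mathbf{i}\in F$, so it suffices to show that $\Delta'_{\mathbf{i}\mathbf{i}}\le\gamma'$ for all $\mathbf{i}\in F$ together with the auxiliary bound $\gamma'\ge\gamma_h$.

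First I would compute the diagonal entries directly from $\Delta'=\Delta^g\Delta^f$:
\begin{equation*}
\Delta'_{\mathbf{i}\mathbf{i}}
=\sum_{\mathbf{k}\in F}g_{\mathbf{i}-\mathbf{k}}\,f_{\mathbf{k}-\mathbf{i}}
=\sum_{\mathbf{j}\in F-\mathbf{i}}f_{\mathbf{j}}\,g_{-\mathbf{j}}.
\end{equation*}
By the definition of $G^+$, the products $f_{\mathbf{j}}g_{-\mathbf{j}}$ with $\mathbf{j}\notin G^+$ are nonpositive, so dropping those indices only increases the sum. Restricting further to $\mathbf{j}\in G^+$, whose contributions are nonnegative, gives
\begin{equation*}
\Delta'_{\mathbf{i}\mathbf{i}}
\le\sum_{\mathbf{j}\in(F-\mathbf{i})\cap G^+}f_{\mathbf{j}}g_{-\mathbf{j}}
\le\sum_{\mathbf{j}\in G^+}f_{\mathbf{j}}g_{-\mathbf{j}}=\gamma'.
\end{equation*}
The stability condition then yields $v_{\mathbf{i}}\le\gamma'-1$ for every $v\in\mathcal{R}'_F$ and every $\mathbf{i}\in F$, which is exactly $\mathcal{R}'_F\subseteq\{0,\dots,\gamma'-1\}^F$.

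For the remaining claim $\gamma'\ge\gamma_h$, I would use that $h=fg$, so
\begin{equation*}
\gamma_h=h_{\mathbf{0}}=\sum_{\mathbf{k}\in\mathbb{Z}^d}f_{\mathbf{k}}g_{-\mathbf{k}}
=\sum_{\mathbf{k}\in G^+}f_{\mathbf{k}}g_{-\mathbf{k}}
+\sum_{\mathbf{k}\notin G^+}f_{\mathbf{k}}g_{-\mathbf{k}}
\le\gamma',
\end{equation*}
because the second sum consists of nonpositive terms.

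There is no real obstacle here; the argument is a direct consequence of the convolutional shape of $\Delta'_{\mathbf{i}\mathbf{i}}$ and the definition of $\gamma'$ as the positive part of the full convolution $h_{\mathbf{0}}$. The only delicate point to keep in mind is that the sum defining $\Delta'_{\mathbf{i}\mathbf{i}}$ is taken over the translated window $F-\mathbf{i}$ rather than all of $\mathbb{Z}^d$, which is precisely why the bound $\gamma'$ (and not $\gamma_h$) is the natural one.
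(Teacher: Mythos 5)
Your proposal is correct and follows essentially the same route as the paper: both identify $\Delta'_{\mathbf{i}\mathbf{i}}$ as the convolution $\sum_{\mathbf{k}\in F}g_{\mathbf{i}-\mathbf{k}}f_{\mathbf{k}-\mathbf{i}}$ and bound it by its positive part $\gamma'$, then invoke stability of recurrent configurations. Your write-up is merely more explicit than the paper's one-line argument, including the verification that $\gamma'\ge\gamma_h$.
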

\begin{proof}
Since $h=fg \in \mathbf{P}_d$ the dominant term $\gamma_h$ of $h$ equals $h_{\mathbf{0}}=\sum_{\mathbf{k} \in \mathbb{Z}^d} f_{\mathbf{k}}g_{-\mathbf{k}}>0$. Hence $G^+$ is nonempty and we obtain for any $F \Subset \mathbb{Z}^d$ and $\mathbf{i}\in F$ that $v_{\mathbf{i}}<\Delta'^F_{\mathbf{ii}}=\sum_{\mathbf{k} \in F} g_{\mathbf{i-k}}f_{\mathbf{k-i}} \le \gamma'$. 
\end{proof}

As our next step we want to show that for given $F \Subset \mathbb{Z}^d$ the set \begin{equation}\label{eq:R'g}
\mathcal{W}_F=\{\Delta^{g}v \in \mathcal{R}'_F:v \in \mathbb{Z}^F \}\subseteq \mathcal{R}'_F
%\mathcal{R}'^{(g)}_F=\{v \in \mathcal{R}'_F:v= \pi_F(g\cdot m),\ m=\iota(m'),m' \in \mathbb{Z}^F\}
\end{equation} 
of multiples of $\Delta^g$ in $\mathcal{R}'_F$ form a subgroup of the abelian group $\mathcal{R}'_F$. Before proving this statement we take a closer look into the group $\mathcal{R}'_F$. 

Following the same steps of Section 2.3. in \cite{Redig1}, by simply replacing the toppling matrix therein with $\Delta'$, we see that the neutral element ${e}_{\mathcal{R}'_F}$ of the abelian group $\mathcal{R}'_F$ is given by
\begin{equation}\label{eq:neutral}
{e}_{\mathcal{R}'_F}=\Delta'm
\end{equation}
for some $m \in \mathbb{Z}^F$. Moreover, the addition of the elements $u,v \in \mathcal{R}'_F$ is realised as
\begin{equation}\label{eq:stableAfterToppling}
u\oplus v=u + v-\Delta'n,
\end{equation}
where $n_\mathbf{i}$ gives the numbers of topplings at site $\mathbf{i} \in F$ needed to stabilise $u + v$.
\begin{rema}\label{r:systemofreps}
If $w \in \mathbb{Z}^F$ then there exists a unique $m=m_w\in \mathbb{Z}^F$ such that $w'=w-\Delta'm \in \mathcal{R}'_F$. In particular, two distinct elements $v,w$ in $\mathcal{R}'_F$ cannot differ by a multiple of $\Delta'$. 
\end{rema}
\begin{prop}\label{l:subgrooupR'g}
For any $F\Subset \mathbb{Z}^d$ the set $\mathcal{W}_F$ is a subgroup of $\mathcal{R}'_F$.
\end{prop}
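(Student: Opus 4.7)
The plan is to verify directly that $\mathcal{W}_F$ is nonempty and closed under the group operation $\oplus$ on $\mathcal{R}'_F$. Since $\mathcal{R}'_F$ is finite, this suffices to conclude that $\mathcal{W}_F$ is a subgroup.

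First, I would show that the identity of $\mathcal{R}'_F$ lies in $\mathcal{W}_F$. By \eqref{eq:neutral}, the neutral element satisfies $e_{\mathcal{R}'_F} = \Delta' m$ for some $m \in \mathbb{Z}^F$, and using the factorisation $\Delta' = \Delta^g \Delta^f$ we can rewrite this as $e_{\mathcal{R}'_F} = \Delta^g (\Delta^f m)$ with $\Delta^f m \in \mathbb{Z}^F$. Thus $e_{\mathcal{R}'_F}$ is a multiple of $\Delta^g$ lying in $\mathcal{R}'_F$, so it belongs to $\mathcal{W}_F$.

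Next, I would verify closure under $\oplus$. Given $w = \Delta^g v$ and $w' = \Delta^g v'$ in $\mathcal{W}_F$ with $v, v' \in \mathbb{Z}^F$, formula \eqref{eq:stableAfterToppling} yields $w \oplus w' = w + w' - \Delta' n$, where $n \in \mathbb{Z}^F$ records the toppling counts needed to stabilise $w + w'$. Factoring $\Delta'$ gives
\[ w \oplus w' = \Delta^g v + \Delta^g v' - \Delta^g \Delta^f n = \Delta^g\bigl(v + v' - \Delta^f n\bigr), \]
so $w \oplus w'$ is again a multiple of $\Delta^g$ with coefficient vector $v + v' - \Delta^f n \in \mathbb{Z}^F$. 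Since $w \oplus w'$ lies in $\mathcal{R}'_F$ by definition of the group operation, we conclude $w \oplus w' \in \mathcal{W}_F$.

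There is no real obstacle in this argument; the whole proof rests on the single observation that the factorisation $\Delta' = \Delta^g \Delta^f$ — which is precisely what makes the pair $(f,g)$ the right setting — propagates the property of being a multiple of $\Delta^g$ through both the identity element and the stabilisation step of the group addition.
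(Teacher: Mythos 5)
Your proof is correct and follows essentially the same route as the paper: both rest on the factorisation $\Delta' m = \Delta^g(\Delta^f m)$ to show the identity and the stabilised sum remain multiples of $\Delta^g$. The only (harmless) difference is that you dispense with inverses via the standard finiteness argument, whereas the paper checks $u^{-1}=e_{\mathcal{R}'_F}\ominus u\in\mathcal{W}_F$ explicitly by the same reasoning.
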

\begin{proof}
Fix $F \Subset \mathbb{Z}^d$. If $m\in \mathbb{Z}^F$ then $\Delta'm$ is a multiple of $\Delta^{g}$, since $\Delta'm=\Delta^{g}(\Delta^{f}m)$. By \eqref{eq:neutral} it follows that ${e}_{\mathcal{R}'_F} \in \mathcal{W}_F$. If $u=\Delta^{g} u',v=\Delta^{g} v' \in \mathcal{W}_F$ then $u\oplus v=u+v-\Delta'm$ for some $m \in \mathbb{Z}^F$ (cf. \eqref{eq:stableAfterToppling}). Hence $u\oplus v=\Delta^{g}(u'+v'-m')\in \mathcal{W}_F$. If $u\in \mathcal{W}_F$ then $u^{-1}= e_{\mathcal{R}'_F} \ominus u\in \mathcal{W}_F$ by the same reasoning as before.
\end{proof}
We now draw our attention to our last goal of finding a measure which concentrates on $\mathcal{W}^{(h)}_g$ and has maximal entropy. For the first statement we will need the following lemma. 
\begin{lemm}\label{l:cofactorBound}
There exists a constant $\beta$ such that for any $F \Subset \mathbb{Z}^d$ and $v=\Delta^{g}v' \in \mathcal{W}_F$ with $v' \in \mathbb{Z}^F$ we obtain $0 \le v'_\mathbf{i} < \beta$ for all $\mathbf{i} \in F$.
\end{lemm}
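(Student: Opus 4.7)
The plan is to bound $v'$ uniformly in $F$ by exploiting that $g$ inherits expansiveness from $h=fg$. Since $g$ is expansive, $\kappa:=\min_{\mathbf{t}\in\mathbb{T}^d}|g(\mathbf{t})|>0$, and the homoclinic $w^g\in\ell^1(\mathbb{Z}^d,\mathbb{R})$ satisfying $g\cdot w^g=\delta_\mathbf{0}$ has coefficients decaying exponentially, $|w^g_\mathbf{n}|\le Ae^{-B|\mathbf{n}|}$, by Paley--Wiener applied to the holomorphic extension of $1/g$ to a neighbourhood of $\mathbb{T}^d$. First I would check that the finite section $\Delta^{g,F}$ is invertible for every $F\Subset\mathbb{Z}^d$: on $\ell^2(\mathbb{Z}^d)$ convolution by $g$ is bounded below by $\kappa$ (Parseval), and the finite support of $g$ combined with that of any $\bar{x}\in\ell^2$ supported in $F$ rules out nontrivial kernel elements of $\Delta^{g,F}$. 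Hence the integer preimage $v'\in\mathbb{Z}^F$ of $v\in\mathcal{W}_F\subseteq\{0,\dots,\gamma'-1\}^F$ (cf.\ Corollary~\ref{c:beta}) is uniquely determined.

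Writing $v'=(\Delta^{g,F})^{-1}v$, the upper bound $|v'_\mathbf{i}|\le\beta-1$ will follow once one establishes a uniform operator-norm bound $\|(\Delta^{g,F})^{-1}\|_{\ell^\infty\to\ell^\infty}\le C$ independent of $F$. This uniform bound is the step I expect to be the main obstacle. The natural route is a Jaffard-type decay theorem for inverses of banded, uniformly invertible matrices: starting from the $F$-uniform $\ell^2$-inverse bound $\|(\Delta^{g,F})^{-1}\|_2\le\kappa^{-1}$ and the finite bandwidth $R=\mathrm{diam}(\mathrm{supp}(g))$, one obtains off-diagonal exponential decay $|(\Delta^{g,F})^{-1}_{\mathbf{i}\mathbf{j}}|\le A'\,e^{-B'|\mathbf{i}-\mathbf{j}|}$ with constants $A', B'$ depending only on $g$. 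Summing rows gives $\|(\Delta^{g,F})^{-1}\|_{\ell^\infty\to\ell^\infty}\le A'\sum_{\mathbf{n}\in\mathbb{Z}^d}e^{-B'|\mathbf{n}|}=:C$, and hence $\|v'\|_\infty\le C(\gamma'-1)$.

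For the non-negativity $v'_\mathbf{i}\ge 0$, the cleanest setting is $g\in\mathbf{P}_d$ as in Lemma~\ref{l:positivepol}: there $\Delta^{g,F}$ is a strictly diagonally dominant $Z$-matrix, hence an $M$-matrix with a non-negative inverse, so $v'=(\Delta^{g,F})^{-1}v\ge 0$ follows directly from $v\ge 0$. For more general expansive $g$ allowed by the standing hypotheses, non-negativity would instead follow from the sandpile construction: every $v\in\mathcal{W}_F$ arises as the stabilisation of some non-negative combination $\sum_\mathbf{j}k_\mathbf{j}\,\Delta^g_{\cdot\mathbf{j}}$ against $\Delta'=\Delta^g\Delta^f$, yielding a representation $v=\Delta^g(u-\Delta^f n)$ with $u\in\mathbb{Z}_{\ge 0}^F$ the generator count and $n\in\mathbb{Z}_{\ge 0}^F$ the toppling count; uniqueness of the preimage then forces $v'=u-\Delta^f n$, whose non-negativity reflects the canonical choice of non-negative representative in the finite abelian group $\mathcal{W}_F$.
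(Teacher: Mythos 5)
There is a genuine gap at precisely the step you flag as the main obstacle: the uniform bound $\|(\Delta^{g,F})^{-1}\|_{\ell^\infty\to\ell^\infty}\le C$ independent of $F$. You try to derive it from expansiveness of $g$ alone, via a uniform finite-section bound $\|(\Delta^{g,F})^{-1}\|_2\le\kappa^{-1}$ followed by Jaffard-type off-diagonal decay. The premise is false: a lower bound for the bi-infinite convolution operator does not pass to its finite sections, because $\pi_F(g\cdot\iota_F(x))$ discards the part of $g\cdot\iota_F(x)$ living outside $F$. Concretely, $g=1-2u\in\mathbf{R}_1$ is expansive ($g$ vanishes only at $s=1/2$), yet for $F=\{1,\dots,N\}$ the matrix $\Delta^{g,F}$ is unitriangular with inverse entries $2^{i-j}$; testing against $x_n=2^{n-1}$, which $\Delta^{g,F}$ maps to the first standard basis vector, shows $\|(\Delta^{g,F})^{-1}\|_{\ell^\infty\to\ell^\infty}\ge2^{N-1}$. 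Uniform invertibility of finite sections is strictly stronger than invertibility of the symbol (this is the classical obstruction to the finite section method), so no Paley--Wiener or Jaffard argument can rescue the step from expansiveness of $g$ alone. The same issue affects your invertibility argument: a kernel element $x$ of $\Delta^{g,F}$ only forces $g\cdot\iota_F(x)$ to vanish \emph{on} $F$, not everywhere.

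The missing idea is that the bound must be routed through the cofactor $f$ and the product $\Delta'=\Delta^{g}\Delta^{f}$, which by the standing hypothesis is a toppling matrix: strictly diagonally dominant with integer entries, so $\Delta'_{\mathbf{ii}}-\sum_{\mathbf{j}\neq\mathbf{i}}|\Delta'_{\mathbf{ij}}|\ge1$ and Varah's bound gives $\|(\Delta')^{-1}\|_\infty\le1$ uniformly in $F$ (this also settles invertibility of $\Delta^{g}$ and $\Delta^{f}$, since $\det\Delta'\neq0$). The factorisation $(\Delta^{g})^{-1}=\Delta^{f}(\Delta')^{-1}$ then yields
\[
\|v'\|_{\max}\le\|\Delta^{f}\|_\infty\,\|(\Delta')^{-1}\|_\infty\,\|v\|_{\max}\le\|f\|_1\,(\gamma'-1),
\]
so $\beta=\|f\|_1\gamma'$ works; this is essentially the whole proof in the paper. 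An $f$-free argument cannot succeed, since the claim is not true for arbitrary expansive $g$ without the hypothesis on $\Delta'$. (Your remarks on non-negativity via the $M$-matrix property of $\Delta^{g,F}$ for $g\in\mathbf{P}_d$ are reasonable; the paper's own proof in fact only bounds $|v'_{\mathbf{i}}|$ and is silent on the lower bound, but that is a side issue compared to the gap above.)
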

\begin{proof}
Any matrix satisfying the properties (P1) and (P2) of a toppling matrix is a special case of a so-called {\it strictly diagonally dominant matrix}. It is well known (cf. \cite{Varah}) that a matrix $A$ of this kind has a bounded inverse, namely 
\[
\|A^{-1}\|_\infty\le \max_{\mathbf{i} \in F} \frac{1}{|A_\mathbf{ii}|-\sum_{\mathbf{j\neq i}}|A_{\mathbf{ij}}|}
\]
 and therefore $\|\Delta'^{-1}\|_\infty\le 1$ for any $F \Subset \mathbb{Z}^d$. In particular, the matrix $\Delta'$ and its factors $\Delta^f$ and $\Delta^g$ are invertible. If $\Delta^g v' =v$ then 
 \[
 \|v'\|_{\max}=\|(\Delta^f\Delta'^{-1})v\|_{\max}\le \|\Delta^f\|_\infty\|\Delta'^{-1}\|_\infty\|v\|_{\max},
 \] 
 where $\|.\|_{\max}$ is the maximum norm on $\mathbb{Z}^F$ and $\|.\|_\infty$ its induced matrix norm. By definition of $\Delta^f$ and Corollary \ref{c:beta} the first and the third norm is bounded by $\|f\|_1=\sum_{\mathbf{k}\in \mathbb{Z}^d} |f_{\mathbf{k}}|=:c_1$ and $\gamma'-1$ (cf. \eqref{eq:gamma'}), respectively. The statement now follows by putting $\beta=c_1 \gamma'$.
\end{proof}
\begin{rema}
For $g\in \mathbf{P}_d, d\ge 1$ and associated $f\in \mathbf{P}_d^+$ one can show that $0\le v'_{\mathbf{i}} < \gamma_h$, where $h=fg \in \mathbf{P}_d$ (cf. Definition \ref{d:dompol}).
\end{rema}
As an abelian group the set $\mathcal{W}_F$ of multiples of $\Delta^g$ in $\mathcal{R}'_F$ carries a unique addition-invariant measure $\mu^{(g)}_F$. Since $\Lambda_{\gamma'}=\{0,\dots,\gamma'-1\}^{\mathbb{Z}^d}$ is compact, every sequence $(\mu^{(g)}_{k})_{k \ge 1}$ of Haar measures on $\mathcal{W}_{F_k}\subset \mathcal{R}'_{F_k}\subset \Lambda_{\gamma'}$ along $F_k\uparrow \mathbb{Z}^d$ contains a convergent subsequence. Thus any accumulation point of $(\mu^{(g)}_{k})_{k}$ is a measure $\mu^{(g)}$ on $\Lambda_{\gamma'}$.

\begin{prop}\label{p:concentration}
Let $\mu^{(g)}$ be an accumulation point of $(\mu^{(g)}_{k})_{k \ge 1}$. Then $\mu^{(g)}$ concentrates on $\mathcal{W}^{(h)}_g=\mathcal{R}^{(h)}\cap \ker\xi_g$.
\end{prop}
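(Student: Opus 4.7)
The plan is to split the goal into two independent concentration statements, $\mu^{(g)}(\mathcal{R}^{(h)})=1$ and $\mu^{(g)}(\ker\xi_g)=1$, and combine them via $\mathcal{W}^{(h)}_g=\mathcal{R}^{(h)}\cap\ker\xi_g$. Throughout I would work with a weakly convergent subsequence $\mu^{(g)}_{k_j}\to\mu^{(g)}$.

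For the first concentration I would exploit the burning characterisation \eqref{eq:infiniteba}: since $\mathcal{R}^{(h)}$ is the intersection over $E\Subset\mathbb{Z}^d$ of the clopen cylinders $\pi_E^{-1}(\mathcal{R}^{(h)}_E)\subset\Lambda_{\gamma'}$, it suffices to give each such cylinder full $\mu^{(g)}$-mass. For a fixed $E$ and $j$ large enough that $E\subset F^\circ_{k_j}$, the inclusion $\mathcal{W}_{F_{k_j}}\subseteq\mathcal{R}'_{F_{k_j}}$ combined with Proposition \ref{p:Delta'} yields $\pi_E(\mathcal{W}_{F_{k_j}})\subseteq\mathcal{R}^{(h)}_E$. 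Weak convergence on clopen cylinders upgrades this to $\mu^{(g)}(\pi_E^{-1}(\mathcal{R}^{(h)}_E))=1$, and intersecting over a countable exhaustion $E\uparrow\mathbb{Z}^d$ delivers the claim.

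The substance lies in the second concentration, and the key input is Lemma \ref{l:cofactorBound}. I would introduce $B_\beta=\{u\in\ell^\infty(\mathbb{Z}^d,\mathbb{Z}):\|u\|_\infty\le\beta\}$, which is compact in the product topology, together with $A_\beta=g\cdot B_\beta$, compact since $u\mapsto g\cdot u$ is continuous ($g$ has finite support). Every $v=\Delta^g v'\in\mathcal{W}_{F_{k_j}}$ has $\iota_{F_{k_j}}(v')\in B_\beta$ by Lemma \ref{l:cofactorBound}, and if $E-\textup{supp}(g)\subset F_{k_j}$ then a direct comparison of $\Delta^g v'$ with $g\cdot\iota_{F_{k_j}}(v')$ on $E$ shows $\pi_E(v)\in\pi_E(A_\beta)$. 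Since $\pi_E(A_\beta)$ is finite, the cylinder $\pi_E^{-1}(\pi_E(A_\beta))$ is clopen, and weak convergence gives $\mu^{(g)}(\pi_E^{-1}(\pi_E(A_\beta)))=1$ for every $E$.

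The main obstacle is the final passage from ``$w$ is locally a multiple of $g$'' to ``$w$ is globally a multiple of $g$'', i.e.\ showing $\bigcap_E\pi_E^{-1}(\pi_E(A_\beta))\subseteq\ker\xi_g$. For $w$ in this intersection I would pick, for each $E$, a witness $u_E\in B_\beta$ with $\pi_E(w)=\pi_E(g\cdot u_E)$, extract a product-topological cluster point $u\in B_\beta$ of $(u_E)$ along $E\uparrow\mathbb{Z}^d$ by Tychonoff compactness of $B_\beta$, and use the finite support of $g$ to pass along a suitable subnet to the pointwise limit $(g\cdot u_E)_\mathbf{n}\to(g\cdot u)_\mathbf{n}$; combined with the witness identity this forces $w=g\cdot u\in A_\beta\subset\ker\xi_g$. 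Combining the two concentrations then yields $\mu^{(g)}(\mathcal{W}^{(h)}_g)=1$, as required.
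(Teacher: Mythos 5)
Your proof is correct, and while the first half coincides with the paper's (full measure of $\mathcal{R}^{(h)}$ via the burning algorithm, Proposition \ref{p:Delta'}, and weak convergence on clopen cylinders), your treatment of the second concentration takes a genuinely different and arguably cleaner route. The paper does not split off $\ker\xi_g$ as a separate full-measure event; instead it introduces the set $A$ of points all of whose cylinders carry positive $\mu^{(g)}$-mass, proves $\mu^{(g)}(A)=1$ by a countability argument on null cylinders (via an equivalence relation on the complement), and then shows $A\subseteq\mathcal{W}^{(h)}_g$ pointwise: for each $v\in A$ it extracts finite-volume witnesses $v^{(j)}$ with $\pi_{Q_N}(v)=\pi_{Q_N}(\Delta^{g,F_{k_j}}v^{(j)})$, uses the expansiveness of $g$ (via \eqref{eq:xicontinuity2}) to show these witnesses stabilise on shrunken windows, and glues them with a nested-compact-sets argument. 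You bypass both the support lemma and the expansiveness input by observing that $A_\beta=g\cdot B_\beta$ is a compact (hence closed) subset of $\ker\xi_g$ whose clopen cylinder neighbourhoods $\pi_E^{-1}(\pi_E(A_\beta))$ each carry full measure, and that their intersection collapses back to $A_\beta$ by Tychonoff compactness of the witness set $B_\beta$ together with the continuity of $u\mapsto g\cdot u$. Both arguments rest on the same essential input, the uniform cofactor bound of Lemma \ref{l:cofactorBound} (without which neither the paper's $D(v,K)$ sets nor your $B_\beta$ would be compact), but your version replaces the measure-theoretic support analysis and the expansiveness-based stabilisation by pure point-set topology, which is a simplification; the paper's construction has the side benefit of producing a canonical, coordinatewise-stabilising cofactor $v'$ rather than one obtained from a subnet, though this extra structure is not needed for the Proposition. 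One trivial remark: your hypothesis $E-\textup{supp}(g)\subset F_{k_j}$ is not needed, since $(\Delta^{g,F}v')_{\mathbf{i}}=(g\cdot\iota_F(v'))_{\mathbf{i}}$ holds for every $\mathbf{i}\in F$ by definition of $\Delta^{g,F}$.
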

\begin{proof}
Let $(k_j:j\ge1)$ be a subsequence such that $\mu^{(g)}=\lim_{j\to \infty} \mu^{(g)}_{k_j}$. We need to show that $\mu^{(g)}(\mathcal{W}^{(h)}_g)=1$. First note that $\mu^{(g)}(\mathcal{R}^{(h)})=1$. To see this recall from \eqref{eq:infiniteba} that any $v \in \mathcal{R}^{(h)}$ satisfies $v\in \mathcal{R}^{(h)}_F$ for every $F\Subset \mathbb{Z}^d$.

Fix $F\Subset \mathbb{Z}^d$. Since $\mu_{k_j}^{(g)}$ is concentrated on $\mathcal{W}_{F_{k_j}}\subset \mathcal{R}'_{F_{k_j}}$ and $\pi_{F^\circ_{k_j}}(\mathcal{R}'_{F_{k_j}})=\mathcal{R}^{(h)}_{F^\circ_{k_j}}$ by Proposition \ref{p:Delta'}, it follows that 
\[
\mu_{k_j}^{(g)}(\{v \in \mathcal{W}_{F_{k_j}}:\pi_F(v)\in \mathcal{R}^{(h)}_F\})=1
\]
 for all $j\ge J$, where $J$ is the smallest integer $L$ such that $F_{k_L}^\circ \supseteq F$. Next we define for any $N\ge 1$ and $v \in \mathcal{R}^{(h)}$ the cylinder set $C(v,N)=\{w \in \mathcal{R}^{(h)}:\pi_{Q_N}(w)=\pi_{Q_N}(v)\}$ and consider the set
\begin{displaymath}
A=\{v\in \mathcal{R}^{(h)}:\mu^{(g)}(C(v,N))>0\ \textup{for all}\ N \ge 1\}.
\end{displaymath}
We show the statement that $\mu^{(g)}(A)=1$ in a separate Lemma. 
\begin{lemm}
The set $A$ has full measure with respect to $\mu^{(g)}$.
\end{lemm} 
\begin{proof}
Set $B=\mathcal{R}^{(h)} \setminus A$. We prove $\mu^{(g)}(B)=0$. If $w \in B$ then we can find a minimal $M_w \ge 1$ such that $\mu^{(g)}(C(w,M_w))=0$. Define a relation $\sim$ on $B$ by 
\begin{displaymath}
v \sim w\quad \textup{if and only if}\quad v \in C(w,M_w). 
\end{displaymath}
The relation $\sim$ is an equivalence relation. Reflexivity is obvious. For the symmetry let $v \sim w$. Then $v \in C(w,M_w)\subseteq B$ and $\pi_{Q_{M_w}}(v)=\pi_{Q_{M_w}}(w)$, which in turn implies that $w \in C(v,M_w)=C(w,M_w)$. The symmetry follows if $M_v=M_w$. Since $\mu^{(g)}(C(v,M_w))=\mu^{(g)}(C(w,M_w))=0$ we get $M_v \le M_w$ which implies $C(w,M_v)=C(v,M_v)$. At the same time $\mu^{(g)}(C(w,M_v))=\mu^{(g)}(C(v,M_v))=0$ and therefore $M_v \ge M_w$ by minimality of $M_w$. For the transitivity note that the proof of the symmetry shows that $v \sim w$ if and only if $C(v,M_v)=C(w,M_w)$. Hence, if $u\sim v$ and $v \sim w$ then $C(u,M_u)=C(v,M_v)=C(w,M_w)$ and therefore $u \sim w$. Denote the equivalence class containing $w$ by $[w]=\{v \in B: v \sim w\}$. Then 
\begin{equation}\label{eq:setB}
B =\bigcup_{[w] \in B/\sim} C(w,M_w),
\end{equation}
where $B/\!\! \sim$ is the set of equivalence classes in $B$. Denote by $\mathcal{C}$ the collection of all finite cylinder sets in $\mathcal{R}^{(h)}$ and note that $\mathcal{C}$ is countable. Since the map $\psi:[w] \mapsto C(w,M_w)$ is a bijection from $B/\!\! \sim$ to $\psi(B/\!\! \sim)\subseteq \mathcal{C}$, Equation \eqref{eq:setB} shows that $B$ is a countable union of sets of measure zero. It follows that $\mu^{(g)}(B)=0$. 
\end{proof}
We complete our proof by showing $A \subseteq \mathcal{W}^{(h)}_g$. If $v \in A$ then there exists for any $N\ge 1$ a minimal $L=L(N)$ such that $\pi_{Q_N}(v)\in \pi_{Q_N}(\mathcal{W}_{F_{k_j}})$ for all $j\ge L$ (cf. \eqref{eq:QM}), otherwise $\mu^{(g)}(C(v,N))=0$. Together with \eqref{eq:R'g} this allows us to find for any $j \ge L$ an element $v^{(j)} \in \mathbb{Z}^{F_{k_j}}$ such that 
\begin{equation}\label{eq:finitev}
\pi_{Q_N}(v)=\pi_{Q_N}(\Delta^{g,F_{k_j}} v^{(j)}).
\end{equation} By Lemma \ref{l:cofactorBound} we can assume that $v_\mathbf{i}^{(j)}< \beta$ for every $\mathbf{i}\in Q_N$ and $j\ge L$. 

Fix $N > 3M$, where $M=M_g$ (cf. \eqref{eq:xicontinuity}). Since $g$ is expansive we deduce from \eqref{eq:xicontinuity2} that 
\begin{equation}\label{eq:cofactor}
\pi_{Q_{N-M}}(v^{(j)})=\pi_{Q_{N-M}}(v^{(L)})%\quad \textup{for all}\ j \ge L=L(N).
\end{equation} 
for all $j \ge L=L(N)$. In particular, $v^{(L(N+1))}_\mathbf{i}=v^{(L(N))}_\mathbf{i}$ for all $\mathbf{i} \in Q_{N-M}$ since $L(N+1)\ge L(N)$ for every $N > 3M$. For $K\ge N$ we set 
\[D(v,K)=\{w \in \{0,\dots,\beta-1\}^{\mathbb{Z}^d}: \pi_{Q_{K-M}}(w)=\pi_{Q_{K-M}}(v^{(L(K))})\}.\] 
Since $D(v,K)$ is compact and $D(v,K+1)\subseteq D(v,K)$ for every $K\ge N$ the intersection $\bigcap_{K \ge N} D(v,K)\subseteq \{0,\dots,\beta-1\}^{\mathbb{Z}^d}$ is nonempty. 
\begin{lemm}
Let $v' \in \bigcap_{K \ge N} D(v,K)$. Then $v=g\cdot v'$.
\end{lemm}
\begin{proof}
We prove the statement by showing that $\pi_{Q_n}(g\cdot v')=\pi_{Q_n}(v)$ for every $n\ge N$. For given $n$ we can find integers $m\ge n, l\ge m+M$ such that $F_{k_{L(l)}}\supseteq Q_m \supseteq Q_n + \textup{supp}(\tilde{g})$. Hence 
\begin{displaymath}
\pi_{Q_n}(g\cdot v')=\pi_{Q_n}(\Delta^{g,F_{Q_m}}\pi_{Q_m}(v'))=\pi_{Q_n}(g\cdot \iota_{Q_m}(v')).
\end{displaymath}
By \eqref{eq:cofactor} and since $v' \in D(v,m+M)$ we obtain 
\begin{displaymath}
\pi_{Q_n}(g\cdot \iota_{Q_m}(v'))=\pi_{Q_n}(g\cdot \iota_{Q_m}(v^{(L(m+M))}))=\pi_{Q_n}(g\cdot \iota_{Q_m}(v^{(L(l))})).
\end{displaymath}
And finally, by \eqref{eq:finitev} we get
\begin{displaymath}
\pi_{Q_n}(g\cdot \iota_{Q_{m}}(v^{(L(l))}))=\pi_{Q_n}(g\cdot \iota_{F_{k_{L(l)}}}(v^{(L(l))}))=\pi_{Q_n}(v).
\end{displaymath}
\end{proof}
We are now able to finish the proof of Proposition \ref{p:concentration}. Since $v' \in \ell^\infty(\mathbb{Z}^d,\mathbb{Z})$ the statement of the lemma shows that $v \in \ker \xi_g$ and hence $v\in \mathcal{W}^{(h)}_g$. Therefore $A \subseteq \mathcal{W}^{(h)}_g$ which implies that $\mu^{(g)}(\mathcal{W}^{(h)}_g)=1$. This completes the proof.
\end{proof}
\begin{rema}\label{r:uniqueness}
Proposition \ref{p:concentration} allows us to apply the same proofs as in \cite{4Redig2} (cf. Remark \ref{r:analogy}) and to show that every limit $\lim_{F\uparrow \mathbb{Z}^d}\mu^{(g)}_F$ converges to the unique accumulation point $\mu^{(g)}$ concentrated on $\mathcal{W}^{(h)}_g$ with the property that
\begin{equation*}
\mu^{(g)} \left( \left\{ u \in \mathcal{W}_g^{(h)}: \left|\xi_h^{-1}(\{\xi_h(u)\})\cap \mathcal{W}_g^{(h)}\right|=1 \right\} \right)=1.
\end{equation*}
\end{rema}

Our final step is to show that $\mu^{(g)}$ has maximal entropy. 
\begin{theo}\label{t:entropymeasure}
The measure theoretic entropy $\textup{h}_{\mu^{(g)}}(\sigma)$ of $\sigma=\sigma_{\mathcal{W}^{(h)}_g}$ coincides with its topological entropy $\textup{h}_{\textup{top}}(\sigma)$.
\end{theo}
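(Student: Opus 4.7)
My plan is to prove the two inequalities separately. The variational principle gives $\textup{h}_{\mu^{(g)}}(\sigma) \le \textup{h}_{\textup{top}}(\sigma)$ for free, so the content lies in the reverse inequality.

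The starting point is Remark \ref{r:uniqueness}, which asserts that $\xi_h:\mathcal{W}_g^{(h)} \to X_{\tilde{f}}$ is $\mu^{(g)}$-almost-one-to-one. Since the fibre entropy of an essentially injective factor map vanishes, this yields
\[
\textup{h}_{\mu^{(g)}}(\sigma) = \textup{h}_{(\xi_h)_*\mu^{(g)}}(\alpha_{\tilde{f}}).
\]
Because $h=fg$ is expansive, so is the factor $f$, and hence $\tilde{f}$; by \cite[Thm.~6.5]{4Schmidt2}, $\alpha_{\tilde{f}}$ is expansive and the Haar measure $\lambda_{X_{\tilde{f}}}$ is the unique measure of maximal entropy on $X_{\tilde{f}}$, with entropy $\textup{h}(\alpha_{\tilde{f}})$. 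By Theorem \ref{t:equalentropy} this matches $\textup{h}_{\textup{top}}(\sigma)$, so it suffices to identify $(\xi_h)_*\mu^{(g)}$ with $\lambda_{X_{\tilde{f}}}$.

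For this identification I would exploit the translation invariance of $\mu^{(g)}_{F_k}$ under the abelian group operation $\oplus$ on $\mathcal{W}_{F_k}$. For $u,w \in \mathcal{W}_{F_k}$ one has $u \oplus w = u+w - \Delta' n$ for some $n \in \mathbb{Z}^{F_k}$, and $\iota_{F_k}(\Delta' n)$ agrees with $h \cdot \iota_{F_k}(n) \in \ker \xi_h$ except for a correction supported in $\mathbb{Z}^d \setminus F_k$. Hence $\xi_h(\iota_{F_k}(u\oplus w)) = \xi_h(\iota_{F_k}(u)) + \xi_h(\iota_{F_k}(w)) + \varepsilon_k(u,w)$ for a boundary error $\varepsilon_k(u,w) \in X_{\tilde{f}}$; by the $\ell^1$-summability of the homoclinic point $w^h$, the $Q_L$-coordinates of $\varepsilon_k$ tend to $0$ as $F_k \uparrow \mathbb{Z}^d$ uniformly in $w$, for every fixed $L$. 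Passing to the weak$^*$ limit, $(\xi_h)_*\mu^{(g)}$ becomes invariant under translation by every element of the subgroup $\bigcup_k \xi_h(\iota_{F_k}(\mathcal{W}_{F_k})) \subset X_{\tilde{f}}$, which is dense by a finite-volume analogue of Lemma \ref{l:propxi}(1). A probability measure on a compact abelian group invariant under a dense subgroup of translations is the normalised Haar measure, establishing $(\xi_h)_*\mu^{(g)} = \lambda_{X_{\tilde{f}}}$.

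The main obstacle is controlling the boundary error $\varepsilon_k(u,w)$ uniformly in $u,w \in \mathcal{W}_{F_k}$ when read through a fixed window $Q_L$: a single toppling at an interior site can in principle trigger a cascade reaching $\partial F_k$, so one must verify that the cumulative effect of such cascades, convolved against the $\ell^1$-summable kernel $w^h$, contributes negligibly to the $Q_L$-projection once $F_k$ is sufficiently large. Once the boundary analysis is in place, the remainder of the argument is formal, relying only on expansiveness of $\tilde{f}$ and the almost-injectivity of $\xi_h$.
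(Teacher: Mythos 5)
Your route is genuinely different from the paper's, so it is worth saying first what the paper actually does: it never identifies $(\xi_h)_*\mu^{(g)}$ with Haar measure on $X_{\tilde{f}}$. Instead it bounds $\textup{h}_{\textup{top}}(\sigma)$ from above by $\lim_{L}|Q_L|^{-1}\log|\mathcal{W}_{Q_L}|$ (Lemma \ref{l:estimation} and Corollary \ref{c:measureentropy}) and bounds $\textup{h}_{\mu^{(g)}}(\sigma)$ from below by the same quantity, using that $\mu^{(g)}_{Q_L}$ is equidistributed on the finite group $\mathcal{W}_{Q_L}$, so that $\log|\mathcal{W}_{Q_L}|=H_{\mu^{(g)}_{Q_L}}(\zeta_L)$, together with a subadditivity estimate for the averaged measures $\nu_L$ as in \cite[Thm.~5.9]{4Schmidt1}. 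Your plan, if completed, would prove slightly more (namely $(\xi_h)_*\mu^{(g)}=\lambda_{X_{\tilde{f}}}$), and you do not even need the almost-one-to-one property of Remark \ref{r:uniqueness} for the direction you want: entropy does not increase under factor maps, so $(\xi_h)_*\mu^{(g)}=\lambda_{X_{\tilde{f}}}$ already gives $\textup{h}_{\mu^{(g)}}(\sigma)\ge \textup{h}_{\lambda_{X_{\tilde{f}}}}(\alpha_{\tilde{f}})=\textup{h}_{\textup{top}}(\alpha_{\tilde{f}})=\textup{h}_{\textup{top}}(\sigma)$ by Theorem \ref{t:equalentropy}.

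As written, however, there is a genuine gap, and it is not where you locate it. The boundary error $\varepsilon_k(u,w)$ you flag as the main obstacle is controllable with tools already in the paper: the stabilising vector $n$ in $u\oplus w=u+w-\Delta'n$ satisfies $\|n\|_{\max}\le\|\Delta'^{-1}\|_\infty\,\|u+w-(u\oplus w)\|_{\max}\le 2\gamma'$ by the Varah bound used in Lemma \ref{l:cofactorBound}, so no avalanche can produce unbounded toppling counts; the correction is uniformly bounded, supported in a fixed-width annulus about $\partial F_k$, and convolution with $w^h\in\ell^1(\mathbb{Z}^d,\mathbb{R})$ then kills its $Q_L$-coordinates as $F_k\uparrow\mathbb{Z}^d$. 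The real gap is the density step. Because $\xi_h\circ\iota_{F_k}$ is only an approximate homomorphism, $\bigcup_k\xi_h(\iota_{F_k}(\mathcal{W}_{F_k}))$ is not a subgroup of $X_{\tilde{f}}$, and its density does not follow from any ``finite-volume analogue of Lemma \ref{l:propxi}(1)'' available in the paper: Proposition \ref{p:Delta'} relates $\mathcal{R}'_F$ to $\mathcal{R}^{(h)}_{F^\circ}$, not $\mathcal{W}_F$ to $\mathcal{W}^{(h)}_g$. What you actually need is, for each $t\in X_{\tilde{f}}$, a sequence $w_k\in\mathcal{W}_{F_k}$ with $\xi_h(\iota_{F_k}(w_k))\to t$; this requires taking $v\in\mathcal{W}^{(h)}_g$ with $\xi_h(v)=t$ (surjectivity from the proof of Theorem \ref{t:equalentropy}), truncating its $g$-cofactor to $F_k$, restabilising via Remark \ref{r:systemofreps}, and running the same boundary analysis a second time to see that the restabilisation does not move $\xi_h$ in the limit. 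One also needs the (easy but unstated) observation that the set of translations preserving a Borel probability measure on a compact abelian group is a closed subgroup, so that approximate invariance under a dense set of translations upgrades to full Haar invariance. None of this is unreachable, but until it is written out the identification $(\xi_h)_*\mu^{(g)}=\lambda_{X_{\tilde{f}}}$, and with it your proof, is incomplete.
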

Before proving Theorem \ref{t:entropymeasure} we need some preparation. Recall from \eqref{eq:xicontinuity} and \eqref{eq:QM} the definitions of $Q_M$ and $M_h$ for expansive $h$ and put for the next lemma $M= \max(M_g,M_f)$.

\begin{lemm} \label{l:estimation}
Fix $j\ge 1$ and $L> 6M$. For any $u \in \pi_{Q_{j+L}\setminus Q_j}(\mathcal{W}^{(h)}_g)$ we obtain that
\begin{displaymath}
\bigl|\pi _{Q_j}({\pi}^{-1}_{Q_{j+L+M} \setminus Q_j}(u))\bigl| \le  \bigl| \mathcal{W}_{Q_{j+L}} \bigr|.
\end{displaymath}
\end{lemm}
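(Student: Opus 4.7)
The plan is to build, for each fixed $u$, a map $\Psi$ from the configurations in $\mathcal{W}^{(h)}_g$ that agree with $u$ on $Q_{j+L+M}\setminus Q_j$ into the finite abelian group $\mathcal{W}_{Q_{j+L}}$, with the \emph{separation property} that $\Psi(v_1)=\Psi(v_2)$ forces $\pi_{Q_j}(v_1)=\pi_{Q_j}(v_2)$. Once such a separating map is constructed, it descends to an injection from $\pi_{Q_j}(\pi^{-1}_{Q_{j+L+M}\setminus Q_j}(u))$ into $\mathcal{W}_{Q_{j+L}}$, giving the desired bound on cardinalities.

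The construction of $\Psi$ exploits the factorization $h=fg$. Since $g$ is expansive, every $v\in\mathcal{W}^{(h)}_g$ admits a unique expression $v=g\cdot v'$ with $v'\in\ell^\infty(\mathbb{Z}^d,\mathbb{Z})$. Define $\Psi(v)$ to be the unique representative in $\mathcal{R}'_{Q_{j+L}}$ of the coset $\Delta^{g,Q_{j+L}}\pi_{Q_{j+L}}(v') + \Delta'\mathbb{Z}^{Q_{j+L}}$, whose existence and uniqueness are granted by Remark~\ref{r:systemofreps}. Because $\Delta'\mathbb{Z}^{Q_{j+L}} = \Delta^g\Delta^f\mathbb{Z}^{Q_{j+L}}\subseteq\Delta^g\mathbb{Z}^{Q_{j+L}}$, this representative is itself a multiple of $\Delta^g$, and so lies in $\mathcal{W}_{Q_{j+L}}$ by Proposition~\ref{l:subgrooupR'g}.

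To verify separation, assume $\Psi(v_1)=\Psi(v_2)$ and that $v_1,v_2$ agree on the annulus $Q_{j+L+M}\setminus Q_j$. Invertibility of $\Delta^g$ (cf.\ the proof of Lemma~\ref{l:cofactorBound}) turns the first equality into $\pi_{Q_{j+L}}(v_1'-v_2') = \Delta^{f,Q_{j+L}}m$ for some $m\in\mathbb{Z}^{Q_{j+L}}$, and this matrix product coincides on $Q_{j+L}$ with the convolution $f\cdot\iota(m)$. The pointwise version of \eqref{eq:xicontinuity2} applied to $g$ turns the agreement of $v_1$ and $v_2$ on the annulus into $v_1'-v_2'=0$ on $Q_{j+L}\setminus Q_{j+M}$, so $f\cdot\iota(m)=0$ on the same sub-annulus; a second application of \eqref{eq:xicontinuity2}, now for $f$, then forces $m=0$ on $Q_{j+L-M}\setminus Q_{j+2M}$. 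Setting $\tilde m := m\vert_{Q_{j+2M}}$ extended by zero on $\mathbb{Z}^d$ and using $g\cdot f=h$ yields $v_1-v_2 = h\cdot\tilde m$ on $Q_j$; the assumption $L>6M$ guarantees that any site in the support of $h\cdot\tilde m$ outside $Q_j$ sits inside $Q_{j+L-M}\setminus Q_j$, where $(h\cdot\tilde m)$ must equal $(v_1-v_2)=0$. Hence $h\cdot\tilde m$ is actually supported in $Q_j$ and belongs to $h\cdot\mathbf{R}_d$.

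The conclusion then uses the standard max-filling device from Lemma~\ref{l:nofmultiples}: define $\tilde v_i\in\mathcal{R}^{(h)}$ by $\tilde v_i\vert_{Q_j}=v_i\vert_{Q_j}$ and $\tilde v_i\vert_{\mathbb{Z}^d\setminus Q_j}=v_{\max}\vert_{\mathbb{Z}^d\setminus Q_j}$. Then $\tilde v_1-\tilde v_2=\pi_{Q_j}(v_1-v_2)=h\cdot\tilde m\in h\cdot\mathbf{R}_d$, and the contrapositive of Lemma~\ref{l:propxi}~(2) forces $\tilde v_1=\tilde v_2$, whence $\pi_{Q_j}(v_1)=\pi_{Q_j}(v_2)$. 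The main technical obstacle is the double expansiveness argument in paragraph three: the successive applications of \eqref{eq:xicontinuity2} for $g$ and then for $f$ each eat a boundary layer of thickness $M$, and the final check that $h\cdot\tilde m$ does not leak outside $Q_j$ requires enough room between the support of $\tilde m$ and the outer boundary of the prescribed annulus, which is precisely what the hypothesis $L>6M$ provides.
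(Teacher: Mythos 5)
Your proof is correct and follows essentially the same route as the paper: the map $\Psi$ is exactly the paper's $\phi$ (push $\Delta^{g}\pi_{Q_{j+L}}(v')$ into $\mathcal{W}_{Q_{j+L}}$ via the unique $\Delta'$-coset representative of Remark \ref{r:systemofreps}), and injectivity/separation is obtained by the same double application of \eqref{eq:xicontinuity2} to $g$ and then $f$. The only cosmetic difference is that you inline the max-filling argument and Lemma \ref{l:propxi}(2) at the end, where the paper instead concludes $\tilde{\iota}_{Q_j}(v-w)\in f\cdot\mathbf{R}_d$ and cites Lemma \ref{l:nofmultiples} directly; the content is identical.
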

\begin{proof}
Set $\Delta'=\Delta'^{Q_{j+L}}$ and recall that $\Delta'=\Delta^{g}\Delta^{f}$. If $F \Subset \mathbb{Z}^d$, $w\in \ell^\infty(\mathbb{Z}^d,\mathbb{Z})$ we put $w_F=\pi_F(w) \in \mathbb{Z}^F$ and for notational convenience we write $\bar{Q}=Q_{j}$ and $\bar{Q}_K=Q_{j+K}$ for every $K\ge 1$. To prove the statement we first construct an injective mapping $\phi: \pi_{\bar{Q}_L}({\pi}^{-1}_{\bar{Q}_{L+M} \setminus \bar{Q}}(u)) \to \mathcal{W}_{\bar{Q}_L}$. 

Let $v=g\cdot v'\in {\pi}^{-1}_{\bar{Q}_{L+M} \setminus \bar{Q}}(u) \subset \mathcal{W}^{(h)}_g$ with $v' \in \mathcal{V}^{(h)}_g$ (cf. \eqref{eq:Wgh}--\eqref{eq:Vgh}) and put 
\begin{equation*}%\label{eq:vhat}
\hat{v}=\Delta^{g}v'_{\bar{Q}_L} \in \mathbb{Z}^{\bar{Q}_L}.
\end{equation*} 
By Remark \ref{r:systemofreps} we can find a unique element $m_v \in \mathbb{Z}^{\bar{Q}_L}$ such that
\begin{displaymath}
\tilde{v}=\hat{v}-\Delta'm_v=\Delta^{g}(v'_{\bar{Q}_L}-\Delta^{f}m_v) \in \mathcal{W}_{\bar{Q}_L}.
\end{displaymath} 
By \eqref{eq:xicontinuity2} the map $\phi: \pi_{\bar{Q}_L}({\pi}^{-1}_{\bar{Q}_{L+M} \setminus \bar{Q}}(u)) \to \mathcal{W}_{\bar{Q}_L}$, given by $\phi(v_{\bar{Q}_L})=\tilde{v}$, is well-defined. We prove the injectivity of $\phi$ by contradiction.  

Let $v_{\bar{Q}_L}\neq w_{\bar{Q}_L} \in  \pi_{\bar{Q}_L}({\pi}^{-1}_{\bar{Q}_{L+M} \setminus \bar{Q}}(u))$ and assume $\tilde{v}=\tilde{w}$. Then
\begin{eqnarray*}
\hat{v}-\Delta'm_v &=& \hat{w}-\Delta'm_w \qquad \Leftrightarrow \\
\Delta^{g}(v'_{\bar{Q}_L}-w'_{\bar{Q}_L})&=&\Delta^{g}(\Delta^{f}(m_v-m_w)).
\end{eqnarray*}
Therefore
\begin{equation}\label{eq:fmultiple1}
v'_{\bar{Q}_L}-w'_{\bar{Q}_L}=\Delta^{f}(m_v-m_w)=\pi_{\bar{Q}_L}(f \cdot \iota_{\bar{Q}_L}(m_v-m_w)),
\end{equation}
where $\iota$ is the embedding defined in \eqref{eq:iota}. As $g$ is expansive and $\textup{supp}(v-w)\cap \bar{Q}_L\subseteq \bar{Q}$ we deduce from \eqref{eq:xicontinuity2} that 
\begin{equation}\label{eq:support1}
\textup{supp}(v'-w')\cap \bar{Q}_{L-M}\subseteq \bar{Q}_M
\end{equation}
and analogously by expansiveness of $f$ we get
\begin{equation}\label{eq:support2}
\textup{supp}(m_v-m_w)\cap \bar{Q}_{L-2M}\subseteq \bar{Q}_{2M}.
\end{equation}
Carefully examining \eqref{eq:fmultiple1}--\eqref{eq:support2} yields
\begin{equation*}
%\begin{aligned}
\tilde{\iota}_{\bar{Q}}(v-w)=g\cdot \tilde{\iota}_{\bar{Q}_M}(v'-w')=g\cdot f\cdot \tilde{\iota}_{\bar{Q}_{2M}}(m_v-m_w)\in f\cdot \mathbf{R}_d,
\end{equation*}
a contradiction to Lemma \ref{l:nofmultiples}. Hence $\phi$ is injective and 
\begin{displaymath}
\bigl|\pi _{\bar{Q}}({\pi}^{-1}_{\bar{Q}_L \setminus \bar{Q}}(u))\bigl|=\bigl|\pi _{\bar{Q}_L}({\pi}^{-1}_{\bar{Q}_{L+M} \setminus \bar{Q}}(u))\bigl|\le \bigl| \mathcal{W}_{\bar{Q}_L} \bigr|.
\end{displaymath}
\end{proof}
\begin{coro}\label{c:measureentropy}
The topological entropy $\textup{h}_{\textup{top}}(\sigma)$ of $\sigma=\sigma_{\mathcal{W}^{(h)}_g}$ satisfies \[\textup{h}_{\textup{top}}(\sigma) \le \lim_{N \to \infty} \frac{1}{|Q_N|} \log \bigl| \mathcal{W}_{Q_N} \bigr|.\]
\end{coro}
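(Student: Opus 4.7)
The plan is to combine the entropy formula displayed in \eqref{eq:Whgsize} with the bound provided by Lemma \ref{l:estimation}. Recall from \eqref{eq:Whgsize} that, after inserting the supremum over boundary patterns and absorbing the boundary count (which grows only like a constant to the power $|Q_{j+N}\setminus Q_j|$ and hence contributes $0$ to the entropy in the limit), one has
\begin{equation*}
\textup{h}_{\textup{top}}(\sigma)=\lim_{j\to\infty}\frac{1}{|Q_j|}\sup_{u\in \pi_{Q_{j+N}\setminus Q_j}(\mathcal{W}_g^{(h)})}\log\,\bigl|\pi_{Q_j}(\pi^{-1}_{Q_{j+N}\setminus Q_j}(u))\bigr|
\end{equation*}
for every sufficiently large $N$.

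Next I would fix an integer $L>6M$ with $M=\max(M_f,M_g)$ and apply the previous formula with $N=L+M$. Lemma \ref{l:estimation} shows that, uniformly in $j\ge 1$ and in the boundary pattern $u\in \pi_{Q_{j+L}\setminus Q_j}(\mathcal{W}_g^{(h)})$,
\begin{equation*}
\bigl|\pi_{Q_j}(\pi^{-1}_{Q_{j+L+M}\setminus Q_j}(u))\bigr|\le \bigl|\mathcal{W}_{Q_{j+L}}\bigr|.
\end{equation*}
Substituting this bound into the entropy formula yields
\begin{equation*}
\textup{h}_{\textup{top}}(\sigma)\le \lim_{j\to\infty}\frac{1}{|Q_j|}\log\bigl|\mathcal{W}_{Q_{j+L}}\bigr|=\lim_{j\to\infty}\frac{|Q_{j+L}|}{|Q_j|}\cdot\frac{1}{|Q_{j+L}|}\log\bigl|\mathcal{W}_{Q_{j+L}}\bigr|.
\end{equation*}
Since $L$ is fixed and $|Q_{j+L}|/|Q_j|=((2(j+L)+1)/(2j+1))^d\to 1$ as $j\to\infty$, reindexing by $N=j+L$ gives the desired inequality
\begin{equation*}
\textup{h}_{\textup{top}}(\sigma)\le \lim_{N\to\infty}\frac{1}{|Q_N|}\log\bigl|\mathcal{W}_{Q_N}\bigr|,
\end{equation*}
where one may interpret the right-hand side as a limit superior if existence of the limit has not been established separately.

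The only genuine work has already been carried out in Lemma \ref{l:estimation}; the present step is essentially a bookkeeping exercise. The mild technical point to watch is that the boundary layer $Q_{j+N}\setminus Q_j$ used in \eqref{eq:Whgsize} has width $N=L+M$, which must be at least $7M$, so one chooses $L>6M$ at the outset. After that, the ratio $|Q_{j+L}|/|Q_j|\to 1$ and the reindexing trivialise the passage from the estimate on cylinder counts to the stated asymptotic bound.
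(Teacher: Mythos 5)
Your proposal is correct and follows essentially the same route as the paper: it conditions on the boundary pattern via the entropy formula in \eqref{eq:Whgsize}, invokes Lemma \ref{l:estimation} to bound the cylinder count by $\bigl|\mathcal{W}_{Q_{j+L}}\bigr|$, and then reindexes using $|Q_{j+L}|/|Q_j|\to 1$. The extra remarks about the boundary-layer width and interpreting the limit as a limsup are harmless refinements of the same argument.
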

\begin{proof}
From \eqref{eq:Whgsize} and Lemma \ref{l:estimation} we obtain that for any $L > 6M$
\begin{eqnarray*}
%\begin{aligned}
\textup{h}_{\textup{top}}(\sigma_{\mathcal{W}^{(h)}_g})&=&\lim_{N\to \infty} \frac{1}{|Q_N|} \log \bigl| \pi_{Q_N}(\mathcal{W}^{(h)}_g) \bigr|
\\
&=&\lim_{N\to\infty }\frac{1}{|Q_N|} \sup_{u \in \pi_{Q_{N+L+M}\setminus Q_N}(\mathcal{W}^{(h)}_g)}\log\,\bigl|\pi _{Q_N}({\pi}^{-1}_{Q_{N+L+M} \setminus Q_N}(u))\bigl| \\
&\le& \lim_{N\to\infty }\frac{1}{|Q_N|} \log \bigl| \mathcal{W}_{Q_{N+L}} \bigr|=\lim_{N\to\infty }\frac{1}{|Q_{N+L}|}\log \bigl| \mathcal{W}_{Q_{N+L}} \bigr| \\
&=&\lim_{N\to\infty }\frac{1}{|Q_{N}|}\log \bigl| \mathcal{W}_{Q_N} \bigr|.
\end{eqnarray*}
\end{proof}
The next statement follows from the fact that $\mu^{(g)}$ is the unique accumulation point (cf. Remark \ref{r:uniqueness}). A proof can be found in \cite[Thm. 6.5]{4Schmidt1}.
\begin{lemm}
The measure $\mu^{(g)}$ is $\sigma_{\mathcal{W}^{(h)}_g}$-invariant.
\end{lemm}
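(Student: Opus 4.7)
The plan is to exploit the uniqueness of the accumulation point asserted in Remark \ref{r:uniqueness}, combined with a covariance property of the finite-volume Haar measures under the shift action.

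\emph{Step one.} First I would check that for any $\mathbf{m}\in\mathbb{Z}^d$ and $F\Subset\mathbb{Z}^d$, the shift $\sigma^\mathbf{m}$ sends $\mathcal{W}_F$ (embedded in $\Lambda_{\gamma'}$ by zero-padding off $F$) onto $\mathcal{W}_{F-\mathbf{m}}$, and that its restriction is a group isomorphism. This is a purely algebraic verification: the entries of $\Delta^{g,F}$, $\Delta^{f,F}$ and $\Delta'^F$ depend only on the differences $\mathbf{i}-\mathbf{j}$, so the defining conditions of $\mathcal{W}_F$ from \eqref{eq:R'g} and the burning-algorithm characterisation of recurrence are manifestly equivariant under simultaneous translation of the window $F$ and of the configuration coordinates. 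Uniqueness of Haar measure on a finite abelian group then yields
\[
\sigma^\mathbf{m}_*\mu^{(g)}_F = \mu^{(g)}_{F-\mathbf{m}}.
\]

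\emph{Step two.} Pick any sequence $(F_k)$ with $F_k\uparrow\mathbb{Z}^d$ along which $\mu^{(g)}_{F_k}\to\mu^{(g)}$. Since $\sigma^\mathbf{m}$ is a homeomorphism of $\Lambda_{\gamma'}$, the pushforward is weak* continuous, hence
\[
\mu^{(g)}_{F_k-\mathbf{m}} \;=\; \sigma^\mathbf{m}_*\mu^{(g)}_{F_k} \;\longrightarrow\; \sigma^\mathbf{m}_*\mu^{(g)}.
\]
But $(F_k-\mathbf{m})$ is itself a sequence of finite sets exhausting $\mathbb{Z}^d$, so the uniqueness statement in Remark \ref{r:uniqueness} forces $\mu^{(g)}_{F_k-\mathbf{m}}\to\mu^{(g)}$. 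Equating the two limits gives $\sigma^\mathbf{m}_*\mu^{(g)}=\mu^{(g)}$ for every $\mathbf{m}\in\mathbb{Z}^d$, which is the claimed invariance.

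The main delicate point is Step one: the group $\mathcal{W}_F$ lives most naturally inside $\mathbb{Z}^F$, and one has to fix the convention (zero-padding outside $F$) by which it is embedded into $\Lambda_{\gamma'}$ so that the shift action is defined on the image and does map $\mathcal{W}_F$ to $\mathcal{W}_{F-\mathbf{m}}$. Once this bookkeeping is pinned down and used consistently in the definition of the $\mu^{(g)}_{F_k}$, the rest of the argument is a direct application of Remark \ref{r:uniqueness}.
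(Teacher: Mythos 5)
Your proposal is correct and is essentially the argument the paper has in mind: the paper simply remarks that invariance follows from the uniqueness of the accumulation point (Remark \ref{r:uniqueness}, citing \cite[Thm.~6.5]{4Schmidt1}), and your two steps --- translation covariance $\sigma^{\mathbf{m}}_*\mu^{(g)}_F=\mu^{(g)}_{F-\mathbf{m}}$ from the Toeplitz structure of the toppling matrices plus uniqueness of Haar measure, followed by weak* continuity and uniqueness of the limit along any exhaustion --- are exactly the details being suppressed. Your attention to the embedding convention for $\mathcal{W}_F$ inside $\Lambda_{\gamma'}$ is the right bookkeeping point to flag.
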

\begin{proof}[Proof of Theorem \ref{t:entropymeasure}]
For $\sigma=\sigma_{\mathcal{W}^{(h)}_g}$ we have that $\textup{h}_\mu^{(g)}(\sigma)\le \textup{h}_{\textup{top}}(\sigma)$, since $\mu^{(g)}$ is concentrated on $\mathcal{W}^{(h)}_g$. For the reverse inequality we set 
\[\nu_L=\frac{1}{|Q_L|} \sum_{\mathbf{k}\in Q_L}\sigma^\mathbf{k}_* \mu^{(g)}_{Q_L}\]
and note that $\nu_L \to \mu^{(g)}$ as $L\to \infty$, since $\mu^{(g)}_{Q_L} \to \mu^{(g)}$ and $\mu^{(g)}$ is $\sigma$-invariant. Let $\Gamma=\max(\gamma_h,\gamma')$, where $\gamma_h$ is defined in Definition \ref{d:dompol} and $\gamma'$ in \eqref{eq:gamma'}. Then the partition $\zeta=\{[j]:0\le j \le \Gamma-1 \}$ is a generator set for $\sigma$, where $[j]=\{ v \in \{0,\dots,\Gamma-1\}^{\mathbb{Z}^d}:v_{\mathbf{0}}=j \}$. For $L \ge 1$ we put $\zeta_L= \bigvee_{\mathbf{j}\in Q_L} \sigma^{-\mathbf{j}} (\zeta)$. Since $\mu^{(g)}_{Q_L}$ is the equidistributed measure on $\mathcal{W}_{Q_L}$ we have that $\log \bigl| \mathcal{W}_{Q_L} \bigr|= H_{\mu_{Q_L}^{(g)}}(\zeta_L)$. Using the same methods as in the proof of \cite[Thm. 5.9]{4Schmidt1} we get for every $M,L \ge 1$ with $2M < L$
\[ 
|Q_M| \log \bigl| \mathcal{W}_{Q_L} \bigr| \le \sum_{\mathbf{i}\in Q_L} H_{\sigma^{\mathbf{i}}_*\mu_{Q_L}^{(g)}}(\zeta_M)+ |Q_M|(|Q_L|-|Q_{L-M}|)\log(\Gamma)
\]
and
\[\frac{|Q_M|}{|Q_L|} \log \bigl| \mathcal{W}_{Q_L} \bigr| \le H_{\nu_L}(\zeta_M)+\frac{|Q_M|(|Q_L|-|Q_{L-M}|)}{|Q_L|}\log(\Gamma).
\]
Applying Corollary \ref{c:measureentropy} when sending $L$ to infinity and dividing subsequently by $|Q_M|$ yields
\[
\textup{h}_{\textup{top}}(\sigma) \le \lim_{L\to \infty} \frac{1}{|Q_L|} \log \bigl| \mathcal{W}_{Q_L} \bigr|\le \lim_{M\to \infty} \frac{1}{|Q_M|}H_{\mu^{(g)}}(\zeta_M)=h_\mu^{(g)}(\sigma).
\]
\end{proof}
We have finally found a measure $\mu^{(g)}$ of maximal entropy on the equal entropy cover $\mathcal{W}^{(h)}_g=\mathcal{R}^{(h)}\cap \ker \xi_g$ of $X_{\tilde{f}}$. As shown in \cite[Thm. 6.6]{4Schmidt1} this measure is unique. We summarise our achievements in the following theorem.
\begin{theo} \label{t:symbolicrepresentationgeneral}
Let $d\ge 1$, and $h=fg \in \mathbf{P}_d$ with $f,g$ satisfying the assumptions in Proposition \ref{p:Delta'}. Then the subshift $\mathcal{W}^{(h)}_g$ (resp. $\mathcal{V}^{(h)}_g$) is a symbolic representation of $X_{\tilde{f}}$ with a unique measure $\mu^{(g)}$ (resp. $\mu^{(f)}=(\xi_g)_*\mu^{(g)}$) of maximal entropy for which the covering map $\xi_h$ (resp. $\xi_f$) restricted to $\mathcal{W}^{(h)}_g$ (resp. $\mathcal{V}^{(h)}_g$) is almost one-to-one. 
\end{theo}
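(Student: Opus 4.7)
The plan is to assemble the ingredients developed throughout Section \ref{s:generalisations}. By Theorem \ref{t:equalentropy} the covering map $\xi_h\colon\mathcal{W}^{(h)}_g\to X_{\tilde{f}}$ is continuous, surjective, shift-equivariant, and satisfies $\textup{h}_{\textup{top}}(\sigma_{\mathcal{W}^{(h)}_g})=\textup{h}_{\textup{top}}(\alpha_{\tilde{f}})$. What still needs to be exhibited is a shift-invariant probability measure on $\mathcal{W}^{(h)}_g$ that is of maximal entropy, that is unique among such measures, and that concentrates on a shift-invariant Borel set on which $\xi_h$ is injective; once this is in place the definition of symbolic representation is satisfied.

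For existence I would use the finite-volume Haar measures. By Proposition \ref{l:subgrooupR'g} each $\mathcal{W}_F$ is an abelian subgroup of $\mathcal{R}'_F$ and therefore carries a unique Haar probability measure $\mu^{(g)}_F$. Any weak-$*$ accumulation point $\mu^{(g)}$ of $(\mu^{(g)}_{F_k})_{k\ge1}$ along $F_k\uparrow\mathbb{Z}^d$ inside the compact space $\{0,\dots,\gamma'-1\}^{\mathbb{Z}^d}$ is a probability measure, and Proposition \ref{p:concentration} ensures $\mu^{(g)}(\mathcal{W}^{(h)}_g)=1$, while Theorem \ref{t:entropymeasure} yields $\textup{h}_{\mu^{(g)}}(\sigma)=\textup{h}_{\textup{top}}(\sigma)$. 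Uniqueness of the accumulation point and the almost one-to-one property of $\xi_h$ on a $\mu^{(g)}$-full subset of $\mathcal{W}^{(h)}_g$ are precisely the content flagged in Remark \ref{r:uniqueness}. Concretely, I would verify that the proofs of \cite[Prop.~3.2, Thm.~3.1]{4Redig2} and \cite[Thm.~6.6]{4Schmidt1} transfer: the groups $\mathcal{W}_F$ replace the recurrence classes $\mathcal{R}^{(h)}_F$, the role played by dissipativity of the ambient ASM is taken over by expansiveness of $g$ (which is what inverts multiplication by $g$ on $\ker\xi_g$ via $\bar\xi_g$), and the identity $\ker\xi_h\cap\ell^1(\mathbb{Z}^d,\mathbb{Z})=h\cdot\mathbf{R}_d$ from \eqref{eq:kernel} combined with Lemma \ref{l:nofmultiples} forces two distinct elements of $\mathcal{W}^{(h)}_g$ lying in a common $\xi_h$-fibre to differ by a multiple of $h$ with infinite support, a coincidence negligible under $\mu^{(g)}$.

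The corresponding statement for $\mathcal{V}^{(h)}_g$ follows formally: since $g$ is expansive, the map $\bar\xi_g\colon\mathcal{W}^{(h)}_g\to\mathcal{V}^{(h)}_g$ from \eqref{eq:Vgh} is a shift-equivariant homeomorphism with inverse given by multiplication by $g$ (cf.\ \eqref{eq:hinverse}), so $\mu^{(f)}:=(\bar\xi_g)_*\mu^{(g)}$ serves as the unique measure of maximal entropy on $\mathcal{V}^{(h)}_g$ and the factorisation $\xi_f=\xi_h\circ(\bar\xi_g)^{-1}$ renders $\xi_f$ almost one-to-one with respect to $\mu^{(f)}$. The main obstacle is not the bookkeeping above but the careful transfer of the convergence-and-uniqueness argument from \cite{4Redig2,4Schmidt1}, whose finite-volume inputs must be re-read through the lens of the subgroup $\mathcal{W}_F\subseteq\mathcal{R}'_F$ rather than the full recurrence class; this is exactly the step where the hypotheses of Proposition \ref{p:Delta'} (in particular that $f$ and $g$ have no common factor and that $\Delta^g\Delta^f$ defines a toppling matrix) are used essentially.
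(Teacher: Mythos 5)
Your proposal is correct and follows essentially the same route as the paper, which likewise obtains Theorem \ref{t:symbolicrepresentationgeneral} by combining Theorem \ref{t:equalentropy}, Proposition \ref{l:subgrooupR'g}, Proposition \ref{p:concentration}, Theorem \ref{t:entropymeasure}, and the uniqueness/almost one-to-one transfer from \cite{4Redig2} and \cite{4Schmidt1} recorded in Remark \ref{r:uniqueness}, with the $\mathcal{V}^{(h)}_g$ statement deduced via the homeomorphism $\bar{\xi}_g$ exactly as you describe.
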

\begin{rema}(1) The statement about $\mathcal{V}^{(h)}_g,\mu^{(f)}$, and $\xi_f$ in the above theorem follows fromt the fact that $\xi_g(\mathcal{W}^{(h)}_g)=\mathcal{V}^{(h)}_g$ (cf. \eqref{eq:Wgh}--\eqref{eq:Vgh}). 

(2) The assumptions on $f$ and $g$ in the above theorem seem artificial but are needed to obtain the Haar measures $\mu^{(g)}_F$ on $\mathcal{W}_F$ in finite volume $F$ which subsequently leads to the unique measure $\mu^{(g)}$ of maximal entropy on $\mathcal{W}^{(h)}_g$. Note that these requirements are not necessary in order to show that $\mathcal{W}^{(h)}_g$ is an equal entropy cover of $X_{\tilde{f}}$ (cf. Theorem \ref{t:equalentropy}). Moreover, for $d>1$ the only non-trivial class of Laurent polynomials which we find to meet these requirements consists of pairs $g,f \in \mathbf{R}_d$ for which $g\in \mathbf{P}_d$ and $f\in \mathbf{P}_d^+$. It would surprise us if this class is indeed the only one. 
\end{rema}

\end{document}